\numberwithin{equation}{section}
\theoremstyle{plain}
\newtheorem{thm}{Theorem}[section]
\newtheorem{prop}[thm]{Proposition}
\newtheorem{cor}[thm]{Corollary}
\theoremstyle{definition}
\newtheorem{defn}[thm]{Definition}
\newtheorem{rem}[thm]{Remark}
\newtheorem{example}[thm]{Example}
\newtheorem*{acknowledgement}{Acknowledgement}
\newcommand{\ichi}{\mathbf{1}}
\newcommand{\C}{\mathbb{C}}
\newcommand{\N}{\mathbb{N}}
\newcommand{\R}{\mathbb{R}}
\newcommand{\Z}{\mathbb{Z}}
\newcommand{\calA}{\mathcal{A}}
\newcommand{\calB}{\mathcal{B}}
\newcommand{\calF}{\mathcal{F}}
\newcommand{\calM}{\mathcal{M}}
\newcommand{\calS}{\mathcal{S}}
\newcommand{\supp}{\mathrm{supp}\, }
\newcommand{\K}{\mathbb{K}}
\newcommand{\II}{I\hspace{-3pt}I}
\begin{document}
\title[Bilinear pseudo-differential operators
of $S_{0,0}$-type]
{
Boundedness  
of bilinear pseudo-differential operators 
of $S_{0,0}$-type on $L^2 \times L^2$ 
}

\author[T. Kato]{Tomoya Kato}
\author[A. Miyachi]{Akihiko Miyachi}
\author[N. Tomita]{Naohito Tomita}

\address[T. Kato and N. Tomita]
{Department of Mathematics, 
Graduate School of Science, Osaka University, 
Toyonaka, Osaka 560-0043, Japan}
\address[A. Miyachi]
{Department of Mathematics, 
Tokyo Woman's Christian University, 
Zempukuji, Suginami-ku, Tokyo 167-8585, Japan}

\email[T. Kato]{t.katou@cr.math.sci.osaka-u.ac.jp}
\email[A. Miyachi]{miyachi@lab.twcu.ac.jp}
\email[N. Tomita]{tomita@math.sci.osaka-u.ac.jp}

\date{\today}

\keywords{Bilinear pseudo-differential operators,
bilinear H\"ormander symbol classes}
\thanks{This work was supported by JSPS KAKENHI Grant Numbers 
JP17J00359 (Kato), JP16H03943 (Miyachi), and JP16K05201 (Tomita).}

\subjclass[2010]{35S05, 42B15, 42B35}

\begin{abstract}
We extend the known result 
that the bilinear pseudo-differential 
operators with symbols in the 
bilinear H\"ormander class 
$BS^{-n/2}_{0,0}(\R^n)$ are bounded from 
$L^2 \times L^2$ to $h^1$. 
We show that 
those operators are also bounded from 
$L^2 \times L^2$ to 
$L^r $ for every $1< r\le 2$.  
Moreover we give similar results 
for symbol classes wider 
than $BS^{-n/2}_{0,0}(\R^n)$.  
We also give results  
for symbols of limited smoothness.  
\end{abstract}

\maketitle

\section{Introduction}\label{Introduction}

For a bounded measurable 
function $\sigma = \sigma (x, \xi_1, \xi_2)$ on $(\R^n)^3$,
the bilinear pseudo-differential operator
$T_{\sigma}$ is defined by
\[
T_{\sigma}(f_1,f_2)(x)
=\frac{1}{(2\pi)^{2n}}
\int_{(\R^n)^2}e^{i x \cdot(\xi_1+\xi_2)}
\sigma(x,\xi_1,\xi_2)\widehat{f_1}(\xi_1)
\widehat{f_2}(\xi_2)\, d\xi_1 d\xi_2
\]
for $f_1,f_2 \in \calS(\R^n)$.

For the boundedness of the bilinear operators 
$T_{\sigma}$, we shall 
use the following terminology. 
Let $X_1,X_2$, and $Y$ be function spaces on $\R^n$ 
equipped with quasi-norms 
$\|\cdot \|_{X_1}$, $\|\cdot \|_{X_2}$, 
and $\|\cdot \|_{Y}$, 
respectively.  
If there exists a constant $A$ such that 
\begin{equation}\label{boundedness-X_1X_2Y}
\|T_{\sigma}(f_1,f_2)\|_{Y}
\le A \|f_1\|_{X_1} \|f_2\|_{X_2} 
\;\;
\text{for all}
\;\;
f_1\in \calS \cap X_1  
\;\;
\text{and}
\;\;
f_2\in \calS \cap X_2,  
\end{equation}
then, 
with a slight abuse of terminology, 
we say that 
$T_{\sigma}$ is bounded from 
$X_1 \times X_2$ to $Y$ 
and write 
$T_{\sigma}: X_1 \times X_2 \to Y$.  
The smallest constant $A$ of 
\eqref{boundedness-X_1X_2Y} 
is denoted by 
$\|T_{\sigma}\|_{X_1 \times X_2 \to Y}$. 
If $\calA$ is a class of symbols,  
we denote by $\mathrm{Op}(\calA)$
the class of all bilineaer 
operators $T_{\sigma}$ 
corresponding to $\sigma \in \calA$. 
If $T_{\sigma}: X_1 \times X_2 \to Y$ 
for all $\sigma \in \calA$, 
then we write 
$
\mathrm{Op}(\calA) 
\subset B (X_1 \times X_2 \to Y)$.

The bilinear H\"ormander symbol class
$BS^m_{\rho,\delta}=
BS^m_{\rho,\delta}(\R^n)$,
$m \in \R$, $0 \le \rho, \delta \le 1$,
consists of all
$\sigma(x,\xi_1,\xi_2) \in C^{\infty}((\R^n)^3)$
such that
\[
|\partial^{\alpha}_x\partial^{\beta_1}_{\xi_1}
\partial^{\beta_2}_{\xi_2}\sigma(x,\xi_1,\xi_2)|
\le C_{\alpha,\beta_1,\beta_2}
(1+|\xi_1|+|\xi_2|)^{m+\delta|\alpha|-\rho(|\beta_1|+|\beta_2|)}
\]
for all multi-indices
$\alpha,\beta_1,\beta_2 \in \N_0^n
=\{0, 1, 2, \dots \}^n$.

In the case $\rho=1$ and $\delta <1$, 
the bilinear pseudo-differential operators with symbols 
in $BS^0_{1,\delta}$ are bilinear Calder\'on--Zygmund operators 
in the sense of Grafakos-Torres \cite{GT} 
and they are bounded from $L^p \times L^q$ to 
$L^r$ with $1<p, q<\infty$ and $1/r = 1/p+ 1/q$ 
(see Coifman-Meyer \cite{CM}, 
B\'enyi-Torres \cite{BT-2003}, and 
B\'enyi-Maldonado-Naibo-Torres \cite{BMNT}). 
Here the condition $1/r=1/p+1/q$ is necessary 
since the constant function belongs to 
$BS^0_{1,\delta}$ and the operator $T_{\sigma}$ 
corresponding to $\sigma=1$ is simply the pointwise product 
of functions.


In this paper, we shall be interested in the 
case $\rho= \delta=0$ and consider only 
the boundedness of $T_{\sigma}$ on $L^2 \times L^2$.  
Recall that $BS^{m}_{0,0} (\R^n)$ 
consists of all $\sigma$ satisfying the estimate 
\begin{equation}\label{defBSm00}
|\partial^{\alpha}_x\partial^{\beta_1}_{\xi_1}
\partial^{\beta_2}_{\xi_2}\sigma(x,\xi_1,\xi_2)|
\le C_{\alpha,\beta_1,\beta_2}
(1+|\xi_1|+|\xi_2|)^{m}.  
\end{equation}

Bilinear pseudo-differential operators with symbols in 
$BS^{m}_{0,0} (\R^n)$ have some 
features different from the corresponding   
linear operators. 
For the case of linear pseudo-differential operator, 
which is defined by 
\[
\sigma(X,D)f(x)
=\frac{1}{(2\pi)^n}
\int_{\R^n}e^{ix\cdot\xi}
\sigma(x,\xi)\widehat{f}(\xi)\, d\xi,
\quad f \in \calS(\R^n),
\]
the celebrated Calder\'on-Vaillancourt theorem 
states that the operator 
$\sigma (X,D)$ is bounded on $L^2(\R^n)$ 
if the symbol $\sigma(x,\xi)$ satisfies the estimate 
\[
|\partial^{\alpha}_x\partial^{\beta}_{\xi}\sigma(x,\xi)|
\le C_{\alpha,\beta}
\]
for all multi-indices $\alpha,\beta \in \N_0^n$  
(see \cite {CV}). 
For bilinear operators, 
innocent generalization of this theorem does not hold. 
In fact, B\'enyi-Torres \cite{BT-2004}
proved that there exists a symbol in $BS^0_{0,0}$ 
for which the corresponding bilinear 
pseudo-differential operator is not bounded 
from $L^2 \times L^2$ to $L^1$. 
Thus in order to have the inclusion 
$\mathrm{Op} (BS^{m}_{0,0}) \subset 
B (L^2 \times L^2 \to L^1)$, 
the order $m$ must be negative.
Miyachi--Tomita \cite{MT-2013} proved 
that the inclusion 
$\mathrm{Op} (BS^{m}_{0,0} ( \R^n )) \subset 
B (L^2  \times L^2 \to L^1)$ 
holds 
if and only if $m \le -n/2$.
For the critical case $m=-n/2$, 
it is also proved in \cite{MT-2013} 
that 
\begin{equation}\label{BS00critical}
\mathrm{Op} (BS^{-n/2}_{0,0} ( \R^n )) \subset 
B (L^2 \times L^2 \to h^1), 
\end{equation}
where $h^1$ is the 
local Hardy space of Goldberg \cite{goldberg 1979} 
(the definition of $h^1$ will be given in the next section).

The purpose of the present paper is to improve 
\eqref{BS00critical} in three ways. 
Firstly, we show that the target space 
$h^1$ in  
\eqref{BS00critical} 
can be replaced by $L^r$ 
with $1<r\le 2$ or even by 
the amalgam space $(L^2, \ell^1)$.  
(The definition of the amalgam space is given in the 
next section.)     
Since $(L^2, \ell^1) \hookrightarrow h^1 \cap L^2$, 
this is an improvement of 
\eqref{BS00critical}.  
Secondly, we show that 
the class $BS^{-n/2}_{0,0} ( \R^n ) $ 
can be replaced by a general class. 
We show that the weight 
function $(1+|\xi_1|+|\xi_2|)^{-n/2}$ 
appearing in the definition of 
$BS^{-n/2}_{0,0} ( \R^n ) $ (see \eqref{defBSm00})  
can be replaced by other functions 
and, among functions that have certain 
moderate behavior, 
we shall characterize all the possible weight functions. 
Thirdly, we give some refined results   
concerning operators with symbols of limited smoothness.

To explain our results in more detail, 
we introduce the following.

\begin{defn}\label{BSW00} 
For a nonnegative bounded function 
$W$ on $\R^n \times \R^n$, 
we denote by 
$BS^{W}_{0,0}(\R^n)$ the set of all those 
smooth functions 
$\sigma = \sigma (x, \xi_1, \xi_2)$ 
on $\R^n \times \R^n \times \R^n$ 
such that the estimate 
\[
|\partial^{\alpha}_x 
\partial^{\beta_1}_{\xi_1} 
\partial^{\beta_2}_{\xi_2} 
\sigma(x,\xi_1,\xi_2)|
\le C_{\alpha,\beta_1,\beta_2}
W (\xi_1, \xi_2)
\]
holds for all multi-indices
$\alpha, \beta_1, \beta_2 \in \N_0^n$. 
We shall call $W$ the {\it weight function\/} 
of the class $BS^{W}_{0,0}(\R^n)$. 
\end{defn}

\begin{defn}\label{classB}
We denote by 
$\calB (\Z^n \times \Z^n)$ 
the set of 
all those nonnegative functions 
$V$ on $\Z^n \times \Z^n$  
for which there exists a constant $c\in (0, \infty)$ 
such that the inequality 
\begin{equation}\label{BL222}
\sum_{\nu_1, \nu_2 \in \Z^n} 
V(\nu_1, \nu_2) 
A(\nu_1+ \nu_2) 
B(\nu_1)
C(\nu_2)
\le c 
\|A\|_{\ell^2 (\Z^n) } 
\|B\|_{\ell^2 (\Z^n) } 
\|C\|_{\ell^2 (\Z^n) } 
\end{equation}
holds for all nonnegative 
functions $A, B, C$ on $\Z^n$. 
\end{defn}

Now the following is one of the main theorems 
of this paper.

\begin{thm}\label{main-thm-1}
Let $V$ be a nonnegative bounded function 
on $\Z^n \times \Z^n$ 
and let 
\[
\widetilde{V} (\xi_1, \xi_2)
=
\sum_{\nu_1, \nu_2 \in \Z^n} 
V(\nu_1, \nu_2) 
\ichi_{Q} (\xi_1 - \nu_1) 
\ichi_{Q} (\xi_2 - \nu_2), 
\quad 
(\xi_1, \xi_2) \in \R^n \times \R^n,  
\]
where $Q=[-1/2, 1/2)^n$. 
Then the following hold. 
\\
$(1)$ 
If there exists an $r \in (0, \infty)$ such that 
all $T_{\sigma} \in \mathrm{Op}
(BS^{\widetilde{V}}_{0,0} (\R^n))$ 
are 
bounded from 
$L^2 \times L^2 $ 
to $L^r $, 
then $V \in \calB (\Z^n \times \Z^n)$. 
\\
$(2)$ 
Conversely,  
if $V \in \calB (\Z^n \times \Z^n)$, 
then all 
$T_{\sigma} \in \mathrm{Op}
(BS^{\widetilde{V}}_{0,0} (\R^n))$ 
are 
bounded from 
$L^2  \times L^2 $ 
to the amalgam space 
$(L^2, \ell^1)$. 
In particular, all those 
$T_{\sigma}$ are  
bounded from 
$L^2 \times L^2 $ 
to $L^r $ for all $r\in [1,2]$ and 
to $h^1 $. 
\end{thm}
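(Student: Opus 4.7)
\medskip
\noindent
\textbf{Proof proposal.} I treat the two implications separately, beginning with the sufficient direction (2). The plan is to carry out a smooth unit-cube decomposition of the symbol in the frequency variables, control each localized bilinear piece by a kernel estimate, and then assemble the pieces using the hypothesis $V \in \calB(\Z^n \times \Z^n)$.

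Fix a smooth nonnegative $\varphi \in C^{\infty}_c(\R^n)$ with $\sum_{\nu \in \Z^n} \varphi(\xi - \nu) = 1$, and set $\sigma_{\nu_1, \nu_2}(x, \xi_1, \xi_2) = \sigma(x, \xi_1, \xi_2)\, \varphi(\xi_1 - \nu_1)\, \varphi(\xi_2 - \nu_2)$. On the support of $\sigma_{\nu_1, \nu_2}$ the weight $\widetilde V$ is controlled, up to finitely many neighboring-cube contributions, by $V(\nu_1, \nu_2)$, so each $\sigma_{\nu_1,\nu_2}$ is $\xi$-compactly supported and bounded, together with all its $x$- and $\xi$-derivatives, by a constant multiple of $V(\nu_1, \nu_2)$. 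Introducing thickened frequency-localizations $f_i^{\nu_i}$ of each $f_i$ to a slightly enlarged cube $\nu_i + Q'$ (so that $T_{\sigma_{\nu_1, \nu_2}}(f_1, f_2) = T_{\sigma_{\nu_1, \nu_2}}(f_1^{\nu_1}, f_2^{\nu_2})$ while $\sum_\nu \|f_i^\nu\|_{L^2}^2 \lesssim \|f_i\|_{L^2}^2$), the problem reduces to a uniform localized bilinear estimate.

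The key technical step, which I expect to be the main obstacle, is to establish, for each $(\nu_1, \nu_2) \in \Z^n \times \Z^n$ and each $\mu \in \Z^n$, the localization bound
\[
\bigl\|T_{\sigma_{\nu_1, \nu_2}}(f_1^{\nu_1}, f_2^{\nu_2})\, \ichi_{\mu + Q}\bigr\|_{L^2}
\lesssim V(\nu_1, \nu_2)\, \omega(\mu - \nu_1 - \nu_2)\, \|f_1^{\nu_1}\|_{L^2} \|f_2^{\nu_2}\|_{L^2}
\]
for some rapidly decaying $\omega$ on $\Z^n$. Conjugation by the modulations $x \mapsto e^{-i\nu_j \cdot x}$ reduces to the case $\nu_1 = \nu_2 = 0$; repeated integration by parts in $\xi_1, \xi_2$ (legitimate since the symbol is $C^{\infty}$ and $\xi$-compactly supported) then produces a bilinear integral kernel with pointwise rapid decay in $\langle y_1 - x\rangle\,\langle y_2 - x\rangle$. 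The $x$-dependence of $\sigma$ is absorbed by expanding $\sigma_{\nu_1,\nu_2}(x,\cdot,\cdot)$ in a Fourier series on its supporting cube, whose coefficients inherit the rapid decay and supply the factor $\omega(\mu - \nu_1 - \nu_2)$ after the modulation is restored. Summing the localized bounds and pairing with an arbitrary $A \in \ell^{\infty}(\Z^n)$, the amalgam estimate
\[
\|T_\sigma(f_1, f_2)\|_{(L^2, \ell^1)} \lesssim \|f_1\|_{L^2} \|f_2\|_{L^2}
\]
then follows by invoking \eqref{BL222} with $B(\nu_1) = \|f_1^{\nu_1}\|_{L^2}$ and $C(\nu_2) = \|f_2^{\nu_2}\|_{L^2}$; the embeddings $(L^2, \ell^1) \hookrightarrow L^r$ for $1 \le r \le 2$ and $(L^2, \ell^1) \hookrightarrow h^1$ recorded in the next section give the remaining assertions.

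For (1), I would run the dual scheme with a randomized test symbol
\[
\sigma(x, \xi_1, \xi_2) = \sum_{\nu_1, \nu_2 \in \Z^n} \varepsilon_{\nu_1, \nu_2}\, V(\nu_1, \nu_2)\, \phi(\xi_1 - \nu_1)\, \phi(\xi_2 - \nu_2)
\]
for a smooth bump $\phi$ with $\phi \equiv 1$ on a small neighborhood of the origin and signs $\varepsilon_{\nu_1, \nu_2} \in \{\pm 1\}$; this $\sigma$ lies in $BS^{\widetilde V}_{0,0}(\R^n)$ with seminorms uniform in the signs. Testing against $f_i$ with $\widehat{f_i}(\xi) = \sum_{\nu} B_i(\nu) \phi(\xi - \nu)$ (where $B_1 = B$, $B_2 = C$) yields $\|f_i\|_{L^2} \simeq \|B_i\|_{\ell^2}$ and expresses $T_\sigma(f_1, f_2)$ as a linear combination of plane waves $e^{ix\cdot(\nu_1 + \nu_2)}$ with coefficients $\varepsilon_{\nu_1, \nu_2} V(\nu_1, \nu_2) B(\nu_1) C(\nu_2)$. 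Pairing against a function whose Fourier transform encodes an arbitrary nonnegative $A$ on $\Z^n$, applying the hypothesised $L^r$ bound with H\"older, and finally taking the expectation over $\varepsilon$ via Khintchine's inequality produces exactly the trilinear estimate \eqref{BL222}, with constant controlled by the operator norm.
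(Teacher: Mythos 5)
Your sketch of part (1) is essentially on target, but the sign indexing needs to match the paper's: the signs must be indexed by the \emph{output frequency} $k=\nu_1+\nu_2$, not by pairs $(\nu_1,\nu_2)$. With $\epsilon_{k_1+k_2}$ and the test symbol $\sum \epsilon_{k_1+k_2}V(k_1,k_2)\widetilde\varphi(\xi_1-k_1)\widetilde\varphi(\xi_2-k_2)$, the output is $\sum_k \epsilon_k d_k e^{ik\cdot x}(\calFi\varphi)^2$ with $d_k=\sum_{\nu_1+\nu_2=k}V B C$, and Khintchine applied to the $L^r$ norm over $[-\pi,\pi]^n$ produces $\|d_k\|_{\ell^2_k}\lesssim\|B\|_{\ell^2}\|C\|_{\ell^2}$, which \emph{is} \eqref{BL222} by $\ell^2$-duality. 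With your choice $\epsilon_{\nu_1,\nu_2}$, the variance in Khintchine becomes $\big(\sum_{\nu_1,\nu_2}V^2B^2C^2A(\nu_1+\nu_2)^2\big)^{1/2}$, a different quantity that does not reproduce \eqref{BL222}; and absent the randomization, there is no way to control $\|g_A\|_{L^{r'}}$ by $\|A\|_{\ell^2}$ for general $r\in(0,\infty)$.

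In part (2) the proposal has a genuine gap. The claimed localization
$\|T_{\sigma_{\nu_1,\nu_2}}(f_1^{\nu_1},f_2^{\nu_2})\ichi_{\mu+Q}\|_{L^2}\lesssim V(\nu_1,\nu_2)\,\omega(\mu-\nu_1-\nu_2)\,\|f_1^{\nu_1}\|_{L^2}\|f_2^{\nu_2}\|_{L^2}$
with $\omega$ rapidly decaying confuses frequency localization with spatial localization: the output of $T_{\sigma_{\nu_1,\nu_2}}$ has Fourier support near $\nu_1+\nu_2$, but its $L^2$ mass can sit anywhere in space — it lives wherever $f_1^{\nu_1}$ and $f_2^{\nu_2}$ happen to be concentrated. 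Concretely, for $\sigma\equiv 1$ on the supporting cube, $T_{\sigma_{\nu_1,\nu_2}}(f_1^{\nu_1},f_2^{\nu_2})$ is a pointwise product $g_1 g_2$ of band-limited $L^2$ functions, and one may choose these concentrated near any prescribed $X\in\R^n$; then $\|g_1g_2\|_{L^2(\mu+Q)}$ peaks near $\mu\approx X$, not near $\nu_1+\nu_2$. Neither the kernel decay in $|y_i-x|$ obtained by integration by parts in $\xi_i$ nor the Fourier expansion in the supporting cube supplies a factor decaying in $|\mu-\nu_1-\nu_2|$; modulation conjugation merely shifts frequencies, not positions. Moreover, even granting the bound, the proposed assembly fails: summing $\omega(\mu-\nu_1-\nu_2)$ over $\mu$ leaves $\sum_{\nu_1,\nu_2}V(\nu_1,\nu_2)B(\nu_1)C(\nu_2)$, which corresponds to taking $A\equiv 1$ in \eqref{BL222}, and $1\notin\ell^2(\Z^n)$. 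The paper's Proposition \ref{main-prop} avoids all of this by dualizing against $g$, covering $\R^n$ by translates $\tau+Q$ in \emph{all three} variables $(x,\xi_1,\xi_2)$, and letting the role of $A$ in \eqref{BL222} be played by the local Fourier coefficients $A_{\boldsymbol\alpha}(\nu_3,\nu_0)$ of $g$; the $\nu_3=\nu_1+\nu_2$ dependence is then handled by Parseval ($\ell^2_{\nu_3}$ gives $\|g\|_{L^2(\nu_0+Q)}$), and the amalgam index comes from a H\"older estimate in $\nu_0$, not from any spatial decay of the localized outputs.
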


Some typical examples of functions in 
$\calB (\Z^n \times \Z^n)$ are 
the following.

\begin{example}\label{example-of-V}
The following functions $V$ on 
$\Z^n \times \Z^n$ belong to 
the class $\calB (\Z^n \times \Z^n)$: 
\begin{align}
&\label{V-n/2}
V(\nu_1, \nu_2)= (1 + |\nu_1|+ |\nu_2|)^{-n/2};   
\\
&\label{Vproduct2}
V(\nu_1, \nu_2) = 
(1+ |\nu_{1}| )^{-a_{1}} 
(1+ |\nu_{2}| )^{-a_{2}}, 
\quad 
a_1, a_2>0, \quad a_1+a_2=n/2;   
\\
&\label{Vproduct1}
V(\nu_1, \nu_2) = 
\prod_{j=1}^{n}
\prod_{i=1}^{2} 
(1+ | \nu_{i,j} | )^{-a_{i,j}}  
\quad 
a_{i,j}>0, \quad 
a_{1,j }+a_{2, j}=1/2;  
\end{align}
where 
$\nu_i = (\nu_{i,1}, \dots, \nu_{i,n}) \in \Z^n$, 
$i=1, 2$. 
\end{example}

Notice that 
the bilinear H\"ormander class 
$BS^{-n/2}_{0,0}(\R^n)$ is equal to 
the class $BS^{ \widetilde{V} }_{0,0}(\R^n)$ of 
Theorem \ref{main-thm-1} 
with $V$ of \eqref{V-n/2}. 
Observe that the function 
\eqref{Vproduct2} is bigger than  
\eqref{V-n/2} and 
\eqref{Vproduct1} is much bigger, 
and hence the corresponding classes  
 $BS^{ \widetilde{V} }_{0,0}(\R^n)$ are wider than 
$BS^{-n/2}_{0,0}(\R^n)$. 
We shall prove that not only 
\eqref{V-n/2} but also 
any $V$ in the Lorentz class $\ell^{4,\infty} (\Z^{2n})$ 
belongs to $\calB (\Z^n \times \Z^n)$. 
We also prove $\calB (\Z^n \times \Z^n)$ contains 
functions that are generalizations of  
\eqref{Vproduct2} and \eqref{Vproduct1}.

It will be worthwhile to observe that 
the claim of Theorem \ref{main-thm-1} (2) 
for $V$ of \eqref{Vproduct2} 
is equivalent to the following: 
the bilinear pseudo-differential 
operators $T_{\sigma}$ with 
$\sigma \in BS^0_{0,0}(\R^n)$ are 
bounded from $W^{a_1} \times W^{a_2}$ to 
$(L^2 ,\ell^1 )
\hookrightarrow h^1  \cap L^2 $ 
for all $a_1, a_2$ satisfying the conditions of  \eqref{Vproduct2}, 
where $W^s=W^s (\R^n)$ denotes the 
$L^2$-based Sobolev space.

Recently 
Grafakos--He--Slav\'ikov\'a \cite{GHS} proved 
that if the symbol 
$\sigma (x, \xi, \eta)=\sigma (\xi, \eta)$ does 
not depend on $x$, and if 
$\sigma \in BS^{0}_{0,0}(\R^n) \cap L^{q}(\R^{2n})$ 
with $q<4$, 
then 
$T_{\sigma}$ is bounded from 
$L^2 \times L^2 $ to $L^1 $. 
In the present paper, 
we shall show that this result, 
even in a generalized form, 
can be deduced from Theorem \ref{main-thm-1}.

Not only Theorem \ref{main-thm-1}, 
we also give refined theorems   
which treat symbols of limited smoothness. 
For linear pseudo-differential operators, 
there are several results concerning 
symbols with limited smoothness. 
Authors such
as 
Cordes \cite{Cordes}, 
Coifman-Meyer \cite{CM}, 
Muramatu \cite{Muramatu}, 
Miyachi \cite{Miyachi}, 
Sugimoto \cite{Sugimoto}, 
and Boulkhemair \cite{boulkhemair 1995}  
investigated minimal smoothness assumptions  
on the symbols to assure 
the boundedness of linear pseudo-differential operators. 
As for the $L^2$ boundedness, 
they proved that, roughly speaking, 
smoothness of symbols up to 
$n/2$ for each variable $x$ and $\xi$ 
assures the boundedness in $L^2$. 
For bilinear operators, to the best of the authors' knowledge, 
there is only one result 
concerning symbols of limited smoothness, 
which was given by Herbert--Naibo \cite{herbert naibo 2016}. 
In \cite{herbert naibo 2016}, the authors proved 
that symbols of the class $BS^m_{0,0}(\R^n)$ with $m < -n/2$  
provide bounded bilinear pseudo-differential 
operators in $L^2 \times L^2 \to L^1$ 
if the smoothness up to $n/2$ for the $x$ variable 
and up to $n$ for the $\xi_1$ and $\xi_2$ variables are assumed.
In the present paper, we 
shall relax the smoothness condition of 
\cite{herbert naibo 2016} 
and also give results for general classes which include 
$BS^m_{0,0}(\R^n)$ of critical order $m=-n/2$.

Our method to prove the boundedness 
of pseudo-differential operators 
relies on the idea of 
Boulkhemair \cite{boulkhemair 1995}, 
who treated linear pseudo-differential operators.

We end this section by mentioning the plan of this paper.
In Section \ref{section2}, 
we will give the basic notations used throughout this paper 
and recall the definitions and properties of some function spaces.
In Section \ref{sectionWeight}, 
we give several properties of the class 
$\calB (\Z^n \times \Z^n)$ and prove 
that it contains 
the functions $V$ of Example \ref{example-of-V}.  
In Section \ref{sectionMain}, 
we prove Theorem \ref{main-thm-1} and 
also give two other main theorems of this paper, 
Theorems \ref{main-thm} and 
\ref{main-thm-2}. 
The latter theorems treat symbols with 
limited smoothness. 
In the same section, 
we also give a proof to the theorem  
of Grafakos--He--Slav\'ikov\'a \cite{GHS} 
by using Theorem \ref{main-thm-1}.  
In Section \ref{sectionsharpness},
we show the sharpness of our main theorems.

\section{Preliminaries}\label{section2}

\subsection{Basic notations} 

We collect notations which will be used throughout this paper.
We denote by $\R$, $\Z$, $\N$, and $\N_0$
the sets of real numbers, integers, positive integers, 
and nonnegative integers, respectively. 
We denote by $Q$ the $n$-dimensional 
unit cube $[-1/2,1/2)^n$. 
For $1 \leq p \leq \infty$, $p^\prime$ is 
the conjugate number of $p$ defined by $1/p + 1/p^\prime =1$.
We write 
$[s] = \max\{ n \in \Z : n \leq s \}$ for $s \in \R$. 
For $x\in \R^d$, we write 
$\langle x \rangle = (1 + | x |^2)^{1/2}$. 
Thus $\langle (x,y) \rangle = (1+|x|^2+|y|^2)^{1/2}$ 
for $(x,y) \in \R^n \times \R^n$.

For two nonnegative functions $A(x)$ and $B(x)$ defined 
on a set $X$, 
we write $A(x) \lesssim B(x)$ for $x\in X$ to mean that 
there exists a positive constant $C$ such that 
$A(x) \le CB(x)$ for all $x\in X$. 
We often omit to mention the set $X$ when it is 
obviously recognized.  
Also $A(x) \approx B(x)$ means that
$A(x) \lesssim B(x)$ and $B(x) \lesssim A(x)$.

We denote the Schwartz space of rapidly 
decreasing smooth functions
on $\R^d$ 
by $\calS (\R^d)$ 
and its dual,
the space of tempered distributions, 
by $\calS^\prime(\R^d)$. 
The Fourier transform and the inverse 
Fourier transform of $f \in \calS(\R^d)$ are given by
\begin{align*}
\mathcal{F} f  (\xi) 
&= \widehat {f} (\xi) 
= \int_{\R^d}  e^{-i \xi \cdot x } f(x) \, dx, 
\\
\mathcal{F}^{-1} f (x) 
&= \check f (x)
= \frac{1}{(2\pi)^d} \int_{\R^d}  e^{i x \cdot \xi } f( \xi ) \, d\xi,
\end{align*}
respectively. 
For $m \in \calS^\prime (\R^d)$, 
the Fourier multiplier operator is defined by
\begin{equation*}
m(D) f 
=
\mathcal{F}^{-1} \left[m \cdot \mathcal{F} f \right].
\end{equation*}
We also use the notation $(m(D)f)(x)=m(D_x)f(x)$ 
when we indicate which variable is considered.

For a measurable subset $E \subset \R^d$, 
the Lebesgue space $L^p (E)$, $0<p\le \infty$, 
is the set of all those 
measurable functions $f$ on $E$ such that 
$\| f \|_{L^p(E)} = 
\left( \int_{E} \big| f(x) \big|^p \, dx \right)^{1/p} 
< \infty 
$
if $0< p < \infty$ 
or   
$\| f \|_{L^\infty (E)} 
= 
\mathrm{ess}\, \sup_{x\in E} |f(x)|
< \infty$ if $p = \infty$. 
We also use the notation 
$\| f \|_{L^p(E)} = \| f(x) \|_{L^p_{x}(E)} $ 
when we want to indicate the variable explicitly.

The uniformly local $L^2$ space, denoted by 
$L^2_{ul} (\R^n)$, consists of 
all those measurable functions $f$ on 
$\R^n$ such that 
\begin{equation*}
\| f \|_{L^2_{ul} (\R^n) } 
= \sup_{\nu \in \Z^n}
\left( \int_{Q} 
\big| f(x+\nu) 
\big|^2 \, dx 
\right)^{1/2}
< \infty 
\end{equation*}
(this notion can be found in 
\cite[Definition 2.3]{kato 1975}).

%

Let $\K$ be a countable set. 
We define the sequence spaces 
$\ell^q (\K)$ and $\ell^{q, \infty} (\K)$ 
as follows.  
The space $\ell^q (\K)$, $1 \le q \le \infty$, 
consists of all those 
complex sequences $a=\{a_k\}_{k\in \K}$ 
such that 
$ \| a \|_{ \ell^q (\K)} 
= 
\left( \sum_{ k \in \K } 
| a_k |^q \right)^{ 1/q } <\infty$ 
if $1 \leq q < \infty$  
or 
$\| a \|_{ \ell^\infty (\K)} 
= \sup_{k \in \K} |a_k| < \infty$ 
if $q = \infty$.
For $1\le q<\infty$, 
the space 
$\ell^{ q,\infty }(\K)$ is 
the set of all those complex 
sequences 
$a= \{ a_k \}_{ k \in \K }$ such that
\begin{equation*}
\| a \|_{\ell^{q,\infty} (\K) } 
=\sup_{t>0} 
\big\{ t\, 
\sharp \big( \{ k \in \K : | a_k | > t \} 
\big)^{1/q} \big\} < \infty,
\end{equation*}
where $\sharp$ denotes the cardinality 
of a set. 
Sometimes we write 
$\| a \|_{\ell^{q}} = 
\| a_k \|_{\ell^{q}_k }$ or 
$\| a \|_{\ell^{q,\infty}} = 
\| a_k \|_{\ell^{q,\infty}_k }$.  
If $\K=\Z^n$, 
we usually write $ \ell^q $ or $\ell^{ q, \infty }$ 
for 
$\ell^q (\Z^n)$  
or $\ell^{q, \infty} (\Z^n)$.

Let $X,Y,Z$ be function spaces.  
We denote the mixed norm by
\begin{equation*}
\label{normXYZ}
\| f (x,y,z) \|_{ X_x Y_y Z_z } 
= 
\bigg\| \big\| \| f (x,y,z) 
\|_{ X_x } \big\|_{ Y_y } \bigg\|_{ Z_z }.
\end{equation*} 
(Here pay special attention to the order 
of taking norms.)  
We shall use these mixed norms for  
$X, Y, Z$ being $L^p$ or $\ell^p$. 
Recall that the Minkowski inequality implies 
\begin{equation}\label{mixedMinkowski}
\|f(x,y)\|_{L^p_x L^q_y}\le 
\|f(x,y)\|_{L^q_y L^p_x}, 
\;\; \text{if} \; \; p\le q.
\end{equation}

\subsection{Local Hardy space $h^1$ and the space $bmo$} 
\label{sechardybmo}

We recall the definition of the local Hardy space 
$h^1(\R^n)$ and the space $bmo(\R^n)$.

Let $\phi \in \calS(\R^n)$ be such that
$\int_{\R^n}\phi(x)\, dx \neq 0$. 
Then, the local Hardy space $h^1(\R^n)$ 
consists of
all $f \in \calS'(\R^n)$ such that 
$\|f\|_{h^1}=\|\sup_{0<t<1}|\phi_t*f|\|_{L^1}
<\infty$,  
where $\phi_t(x)=t^{-n}\phi(x/t)$.
It is known that $h^1(\R^n)$
does not depend on the choice of the function $\phi$,
and that $h^1(\R^n) \hookrightarrow L^1(\R^n)$.

The space $bmo(\R^n)$ consists of
all locally integrable functions $f$ on $\R^n$ 
such that
\[
\|f\|_{bmo}
=\sup_{|R| \le 1}\frac{1}{|R|}
\int_{R}|f(x)-f_R|\, dx
+\sup_{|R|\geq1}\frac{1}{|R|}
\int_R |f(x)|\, dx
<\infty,
\]
where $f_R=|R|^{-1}\int_R f$,
and $R$ ranges over the cubes in $\R^n$.

It is known that the dual space of $h^1(\R^n)$ is $bmo(\R^n)$.
See Goldberg \cite{goldberg 1979} for more details about 
$h^1$ and $bmo$. 

\subsection{Amalgam spaces} 
\label{secamalgam}

For $1 \leq p,q \leq \infty$, 
the amalgam space 
$(L^p,\ell^q)(\R^n)$ is 
defined to be the set of all 
those measurable functions $f$ on 
$\R^n$ such that 
\begin{equation*}
\| f \|_{ (L^p,\ell^q) (\R^n)} 
=
 \| f(x+\nu) \|_{L^p_x (Q) \ell^q_{\nu}(\Z^n) }
=\left\{ \sum_{\nu \in \Z^n}
\left( \int_{Q} \big| f(x+\nu) \big|^p \, dx 
\right)^{q/p} \right\}^{1/q} 
< \infty  
\end{equation*}
with usual modification when $p$ or $q$ is infinity.  
Obviously, $(L^p,\ell^p) = L^p$ 
and $(L^2, \ell^\infty) = L^2_{ul}$. 
For $1\leq p,q < \infty$, 
the duality $(L^p,\ell^q)^\ast =  (L^{p'},\ell^{q'}) $ holds.
If $p_1 \geq p_2$ and $q_1 \leq q_2$, 
then 
$(L^{p_1}, \ell^{q_1}) 
\hookrightarrow (L^{p_2},\ell^{q_2}) $.
In particular, 
$(L^2,\ell^r) \hookrightarrow L^r$ 
for $1 \leq r \leq 2$. 
In the case $r=1$, the stronger embedding 
$(L^2,\ell^1) \hookrightarrow h^1$ holds. 
This last fact follows from 
the embedding $bmo \hookrightarrow (L^2,\ell^\infty)$ 
and the duality $(h^1)^{\prime}=bmo$. 
For 
$1\le p, q_1, \dots, q_n \le \infty$, 
we also define the space 
$(L^p,\ell^{q_1} \dots \ell^{q_n} ) (\R^n)$  
by the mixed norm 
\begin{equation*}
\| f \|_{ (L^p,\, \ell^{q_1} \dots \ell^{q_n} ) ( \R^n )} 
=
 \| f(x+\nu) \|_{L^p_x (Q) 
\ell^{q_1}_{\nu_1}(\Z) 
\dots 
\ell^{q_n}_{\nu_n}(\Z) 
}, 
\end{equation*}
where 
$\nu = (\nu_1, \dots, \nu_n) \in \Z^n$. 
See Fournier--Stewart \cite{fournier stewart 1985} 
and Holland \cite{holland 1975} for more 
properties of amalgam spaces.

\section{Class $\calB$}
\label{sectionWeight}

In this section, we give several properties of the class 
$\calB (\Z^n \times \Z^n)$ introduced in 
Definition \ref{classB}. 
We also introduce the class $\calM (\R^d)$, 
which will be used in the next section.

\begin{prop}\label{propertiesB}
\begin{enumerate}
\setlength{\itemindent}{0pt} 
\setlength{\itemsep}{3pt} 
\item 
Every function in the class 
$\calB (\Z^n \times \Z^n )$ 
is bounded. 
\item 
A nonnegative function $V=V(\nu_1, \nu_2)$ 
on $\Z^n \times \Z^n$ belongs to 
$\calB (\Z^n \times \Z^n )$ if and only 
if 
$V(\nu_1+ \nu_2, -\nu_2)$ 
or $V(- \nu_1, \nu_1+ \nu_2)$ 
belongs to $\calB (\Z^n \times \Z^n )$.

\item 
The class $\calB (\Z^n \times \Z^n)$ is not 
rearrangement invariant, 
i.e., 
there exists a function $V$ on $\Z^n \times \Z^n$ 
and a bijection $\Phi : \Z^n \times \Z^n \to \Z^n \times \Z^n$ 
such that  
$V \in \calB (\Z^n \times \Z^n)$ but 
$V\circ \Phi \not\in \calB (\Z^n \times \Z^n)$. 

\item 
 Let $d, d' \in \N$, 
$V\in \calB (\Z^{d}\times \Z^{d})$, 
and 
$V'\in \calB (\Z^{d'}
\times \Z^{d'})$.  
Then 
the function 
\[
W((\mu_1, \mu'_1), (\mu_2, \mu'_2))  
= V (\mu_1, \mu_2) 
V^{\prime} (\mu'_1, \mu'_2), 
\quad 
\mu_1, \mu_2 \in \Z^{d}, 
\quad 
\mu'_1, \mu'_2 
\in \Z^{d^{\prime}}, 
\]
belongs to 
$\calB (\Z^{ d+d^{\prime} }\times \Z^{ d+d^{\prime} })$. 
\end{enumerate}
\end{prop}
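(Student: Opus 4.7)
For item (1), the plan is to test \eqref{BL222} at delta sequences: for any $(\mu_1,\mu_2)\in\Z^n\times\Z^n$ take $A=\ichi_{\{\mu_1+\mu_2\}}$, $B=\ichi_{\{\mu_1\}}$, $C=\ichi_{\{\mu_2\}}$; each has unit $\ell^2$-norm, and only the single term $(\nu_1,\nu_2)=(\mu_1,\mu_2)$ contributes on the left-hand side, forcing $V(\mu_1,\mu_2)\le c$. For item (2), the trilinear form in \eqref{BL222} has an $S_3$-type symmetry among the three labels $\nu_1,\nu_2,-(\nu_1+\nu_2)$, realized by changes of variable on $\Z^n\times\Z^n$. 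The bijection $(\nu_1,\nu_2)=(\mu_1+\mu_2,-\mu_2)$ transforms
\[
\sum_{\nu_1,\nu_2}V(\nu_1,\nu_2)A(\nu_1+\nu_2)B(\nu_1)C(\nu_2)
\;=\;\sum_{\mu_1,\mu_2}V(\mu_1+\mu_2,-\mu_2)\,B(\mu_1+\mu_2)\,A(\mu_1)\,\widetilde{C}(\mu_2),
\]
where $\widetilde{C}(\mu)=C(-\mu)$ has the same $\ell^2$-norm as $C$. The right-hand side is an instance of \eqref{BL222} for weight $V(\mu_1+\mu_2,-\mu_2)$ with test functions $(B,A,\widetilde{C})$, proving $V\in\calB \iff V(\mu_1+\mu_2,-\mu_2)\in\calB$. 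The substitution $(\nu_1,\nu_2)=(-\mu_1,\mu_1+\mu_2)$ is handled identically.

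For item (3), my plan is to produce an explicit pair. Take $V_0(\nu_1,\nu_2)=\ichi_{\{\nu_1=0\}}$; then
\[
\sum V_0\,A\,B\,C \;=\; B(0)\sum_{\nu_2}A(\nu_2)C(\nu_2) \;\le\; \|A\|_{\ell^2}\|B\|_{\ell^2}\|C\|_{\ell^2},
\]
where the last step uses Cauchy--Schwarz and $B(0)\le\|B\|_{\ell^2}$. Hence $V_0\in\calB$. For $V_1=\ichi_{\N_0^n\times\N_0^n}$, choose
\[
B=C=N^{-n/2}\ichi_{\{0,\dots,N-1\}^n},\qquad A=(2N)^{-n/2}\ichi_{\{0,\dots,2N-2\}^n},
\]
each of $\ell^2$-norm $1$; a direct count of $N^{2n}$ terms, each equal to $(2N)^{-n/2}\cdot N^{-n}$, gives $\sum V_1 A B C \approx N^{n/2}\to\infty$, so $V_1\notin\calB$. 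Since both $\{0\}\times\Z^n$ and $\N_0^n\times\N_0^n$ are countably infinite subsets of $\Z^n\times\Z^n$ with countably infinite complements, there is a bijection $\Phi:\Z^n\times\Z^n\to\Z^n\times\Z^n$ mapping the latter onto the former, and $V_0\circ\Phi=V_1$ is the required rearrangement.

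For item (4), the plan is to iterate \eqref{BL222} for $V$ and $V'$ via Fubini. For $A,B,C\ge 0$ on $\Z^{d+d'}$, fix the second blocks $\mu'_1,\mu'_2\in\Z^{d'}$ and apply \eqref{BL222} for $V$ to the $\Z^d$-slices $A(\cdot,\mu'_1+\mu'_2),\,B(\cdot,\mu'_1),\,C(\cdot,\mu'_2)$, obtaining the upper bound $c\,\alpha(\mu'_1+\mu'_2)\beta(\mu'_1)\gamma(\mu'_2)$ with $\alpha(s)=\|A(\cdot,s)\|_{\ell^2(\Z^d)}$ and similarly for $\beta,\gamma$. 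Summing in $(\mu'_1,\mu'_2)$ against $V'$ and invoking \eqref{BL222} for $V'$ on $(\alpha,\beta,\gamma)$ yields $cc'\|\alpha\|_{\ell^2(\Z^{d'})}\|\beta\|_{\ell^2(\Z^{d'})}\|\gamma\|_{\ell^2(\Z^{d'})}$; by Fubini this equals $cc'\|A\|_{\ell^2(\Z^{d+d'})}\|B\|_{\ell^2(\Z^{d+d'})}\|C\|_{\ell^2(\Z^{d+d'})}$, as required.

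The hardest step is item (3): one must locate a rearrangement that destroys $\calB$-membership and calibrate test sequences that realize the lower bound $N^{n/2}$. The remaining items reduce to formal manipulations---a delta test, a linear change of variables, and iterated Fubini combined with \eqref{BL222}.
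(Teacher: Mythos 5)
Your proofs of items (1), (2), and (4) follow essentially the same lines as the paper: a delta-sequence test for (1), a linear change of variables for (2), and an iterated application of \eqref{BL222} via Fubini for (4). All are correct.

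For item (3), your construction is genuinely different from the paper's and is arguably more elementary. The paper's proof starts from a pair of explicit \emph{decaying} weights: it shows that $V(\nu_1,\nu_2)=\langle\nu_1\rangle^{-n/2-\epsilon}$ belongs to $\calB$ while $W(\nu_1,\nu_2)=\langle(\nu_1,\nu_2)\rangle^{-n/2+\alpha}$ does not, and then constructs a bijection $\Phi$ by matching dyadic level sets so that $W\lesssim V\circ\Phi$ pointwise, from which $V\circ\Phi\notin\calB$ follows. Your proof instead takes $V_0=\ichi_{\{\nu_1=0\}}$ and $V_1=\ichi_{\N_0^n\times\N_0^n}$, verifies $V_0\in\calB$ by Cauchy--Schwarz and $V_1\notin\calB$ by an explicit box test realizing growth of order $N^{n/2}$, and then observes that the two supports and their complements are all countably infinite, so a bijection with $V_0\circ\Phi=V_1$ exists. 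Both arguments are correct; your version is shorter and avoids the level-set comparison, but it uses indicator weights that are unrelated to the $BS^m_{0,0}$-type weights of central interest in the paper, whereas the paper's examples double as preparation for the sharpness discussion (the non-membership of $\langle(\nu_1,\nu_2)\rangle^{-n/2+\alpha}$ is reused in Section 5.1). Either choice is adequate for establishing non-rearrangement-invariance.

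One cosmetic remark: in your test for $V_1$ the sequence $A=(2N)^{-n/2}\ichi_{\{0,\dots,2N-2\}^n}$ has $\|A\|_{\ell^2}^2=(1-\tfrac1{2N})^n$, so it is not exactly norm $1$, but it is bounded above and below uniformly in $N$, which is all that is needed; it would be slightly cleaner to take $A=\ichi_{\{0,\dots,2N-2\}^n}$ and divide by its norm, or simply to note that $\|A\|_{\ell^2}\approx 1$.
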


\begin{proof}
(1) If $V$ satisfies \eqref{BL222}, 
then applying it to the case where each of 
$A, B, C$ is a defining function  
of one point we easily find $V (\nu_1, \nu_2)\le c$.

(2) This can be easily proved by a simple change of 
variables.

(3) 
First observe that 
the function 
$V (\nu_1, \nu_2) = \langle \nu_1 
\rangle ^{-n/2- \epsilon}$ with $\epsilon >0$ 
belongs to $\calB (\Z^n \times \Z^n)$. 
In fact 
for this $V$ and for $B(\nu_1)\in \ell^2_{\nu_1} (\Z^n)$, 
the function $VB$ belongs to 
$\ell^{1} (\Z^n)$ and the 
inequality \eqref{BL222} can be easily checked 
by the use of H\"older's inequality. 
(See also Proposition \ref{single-L2} below.)     
On the other hand, for 
$\alpha > 0$, 
the function 
\[
W (\nu_1, \nu_2) = 
\langle (\nu_1, \nu_2) \rangle ^{-n/2 + \alpha}, 
\quad (\nu_1, \nu_2) \in \Z^n \times \Z^n  
\]
does not belong to $\calB (\Z^n \times \Z^n)$. 
In fact, for $A(\mu)=B(\mu)=C(\mu)= 
\langle \mu \rangle^{-n/2-\alpha/4}
\in \ell^{2}_{\mu}(\Z^n)$, it is easy to see that 
$\sum W(\nu_1, \nu_2) A(\nu_1 + \nu_2) B(\nu_1) C(\nu_2)
=\infty$. 
For $j\in \N_0$, set 
\begin{align*}
&
E_j (V)= \{
(\nu_1, \nu_2)  \in \Z^n \times \Z^n 
\mid 
2^{-j-1}< V(\nu_1, \nu_2)  \le 2^{-j}
\}, 
\\
&
E_j (W)= \{
(\nu_1, \nu_2)  \in \Z^n \times \Z^n 
\mid 
2^{-j-1}< W(\nu_1, \nu_2)  \le 2^{-j}
\}. 
\end{align*}
Then both 
$\{E_j(V)\}_{j \in \N_0}$ and $\{E_j(W)\}_{j \in \N_0}$ 
are partitions of $\Z^n \times \Z^n$, 
each 
$E_j(V)$ is an infinite set, 
and 
$E_j(W)$ is a finite set.   
It is easy to construct a 
bijection $\Phi$ of $\Z^n \times \Z^n$ onto itself 
such that 
\begin{equation*}
\Phi (E_{j}(W)) \subset E_{0}(V) \cup \dots \cup E_{j}(V) \
\;\; \text{for all}\;\; 
j \in \N_0. 
\end{equation*}
Then 
$W \le 2^{-j}$ 
and 
$V\circ \Phi > 2^{-j-1}$  
on each $E_{j}(W)$, we have 
$W < 2 V\circ \Phi $ on the whole $\Z^n \times \Z^n$. 
Since $W \not\in \calB (\Z^n \times \Z^n)$, we have  
$V\circ \Phi  \not\in \calB (\Z^n \times \Z^n)$.

(4) 
Let $A, B, C$ be nonnegative functions 
on $\Z^{d+d'} $ and 
consider the sum 
\begin{equation*}
\sum_{
(\mu_1, \mu'_1),  (\mu_2, \mu'_2) \in \Z^{d+d'}
} \, 
V(\mu_1, \mu_2)
V'(\mu'_1, \mu'_2)
A((\mu_1, \mu'_1)+ (\mu_2, \mu'_2)) 
B(\mu_1, \mu'_1)
C(\mu_2, \mu'_2). 
\end{equation*}
If we first take the sum over 
$\mu_1, \mu_2 \in \Z^d$, then 
the assumption $V\in \calB (\Z^d \times \Z^d)$ 
implies that the above sum is bounded by a constant times 
\[
\sum_{
\mu'_1,  \mu'_2 \in \Z^{d'} 
} \, 
V'(\mu'_1, \mu'_2)
\|A(\mu_1, \mu'_1+\mu'_2) \|_{\ell^2_{\mu_1}(\Z^{d})}
\|B(\mu_1, \mu'_1)\|_{\ell^2_{\mu_1}(\Z^{d})}
\|C(\mu_2, \mu'_2)\|_{\ell^2_{\mu_2}(\Z^{d})}. 
\]
Now 
$V'\in \calB (\Z^{d'} \times \Z^{d'})$ 
implies that the last sum is bounded by a constant times 
\begin{align*}
&
\|A(\mu_1, \mu'_1) \|_{\ell^2_{\mu_1}(\Z^{d}) \ell^2_{\mu'_1} (\Z^{d'})}
\|B(\mu_1, \mu'_1)\|_{
\ell^2_{\mu_1} (\Z^{d})
\ell^2_{\mu'_1} (\Z^{d'})}
\|C(\mu_2, \mu'_2)\|_{
\ell^2_{\mu_2}(\Z^{d}) 
\ell^2_{\mu'_2}(\Z^{d'}) }
\\
&=
\|A\|_{\ell^2 (\Z^{d+d'})}
\|B\|_{\ell^2 (\Z^{d+d'})}
\|C\|_{\ell^2 (\Z^{d+d'})}. 
\end{align*}
Thus the function $W$ of (4) 
belongs to $\calB (\Z^{d+d'} \times \Z^{d+d'})$. 
\end{proof}

\begin{prop}\label{single-L2}
Suppose 
a nonnegative function $V$ on 
$\Z^n \times \Z^n$ is  
one of the following forms: 
\[
V(\nu_1, \nu_2)= 
V_{0}(\nu_1), \;
V_{0}(\nu_2), \;
V_{0}(\nu_1+ \nu_2). 
\] 
Then $V \in \calB (\Z^n \times \Z^n)$ 
if and only if $V_0 \in \ell^2 (\Z^n)$. 
In particular, 
a nonzero constant function does not belong to 
$\calB (\Z^n \times \Z^n )$. 
\end{prop}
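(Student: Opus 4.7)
The plan is to reduce the three forms of $V$ to a single one and then carry out two essentially separate computations. By Proposition \ref{propertiesB}(2), the transformation $(\nu_1,\nu_2) \mapsto (\nu_1+\nu_2, -\nu_2)$ sends $V(\nu_1,\nu_2) = V_0(\nu_1)$ to $V_0(\nu_1+\nu_2)$, while $(\nu_1,\nu_2) \mapsto (-\nu_1, \nu_1+\nu_2)$ sends $V(\nu_1,\nu_2) = V_0(\nu_2)$ to $V_0(\nu_1+\nu_2)$ as well. Hence all three forms of $V$ belong to $\calB(\Z^n \times \Z^n)$ if and only if the representative form $V(\nu_1,\nu_2) = V_0(\nu_1)$ does, so it suffices to prove the equivalence in that case.

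For sufficiency, assume $V_0 \in \ell^2(\Z^n)$. In the sum on the left of \eqref{BL222}, I first perform the inner summation over $\nu_2$ and apply the Cauchy--Schwarz inequality to obtain $\sum_{\nu_2} A(\nu_1+\nu_2) C(\nu_2) \le \|A\|_{\ell^2}\|C\|_{\ell^2}$ uniformly in $\nu_1$. A second application of Cauchy--Schwarz to the remaining sum over $\nu_1$ yields the factor $\|V_0\|_{\ell^2}\|B\|_{\ell^2}$, and therefore $V \in \calB(\Z^n \times \Z^n)$ with constant $c = \|V_0\|_{\ell^2}$.

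For necessity, which I expect to be the main technical point, naive test choices such as delta functions only produce the pointwise bound $V_0 \in \ell^\infty$ and are too weak. The idea is to pick all three test functions at a common scale so that the constraint $A(\nu_1+\nu_2)$ is automatically active on the relevant region. Explicitly, for each $N \in \N$, I will take
\[
A = \ichi_{\{|\mu|_\infty \le 2N\}},
\quad
B = V_0 \cdot \ichi_{\{|\nu|_\infty \le N\}},
\quad
C = \ichi_{\{|\mu|_\infty \le N\}}.
\]
Whenever $|\nu_1|_\infty, |\nu_2|_\infty \le N$, the triangle inequality forces $|\nu_1+\nu_2|_\infty \le 2N$, so $A(\nu_1+\nu_2) = 1$ on the support of $B(\nu_1)C(\nu_2)$. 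Writing $S_N = \sum_{|\nu|_\infty \le N} V_0(\nu)^2$, the left-hand side of \eqref{BL222} then equals exactly $(2N+1)^n S_N$, while $\|A\|_{\ell^2}\|B\|_{\ell^2}\|C\|_{\ell^2} \lesssim N^n S_N^{1/2}$. Cancelling the common $N^n$ factor yields $S_N^{1/2} \lesssim c$ uniformly in $N$, and letting $N \to \infty$ gives $V_0 \in \ell^2(\Z^n)$.

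The final assertion about nonzero constants is then immediate: if $V_0$ is a nonzero constant, then $V_0 \notin \ell^2(\Z^n)$, so such a $V$ cannot belong to $\calB(\Z^n \times \Z^n)$.
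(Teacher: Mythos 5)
Your proposal is correct in both directions, but it takes a genuinely different route from the paper's proof. The paper first establishes a standalone lemma: for a nonnegative kernel $K$ on $\Z^n$, the convolution inequality $\bigl\|\sum_{\nu_2} K(\nu_1-\nu_2) X(\nu_2)\bigr\|_{\ell^2_{\nu_1}} \le c\|X\|_{\ell^2}$ holds for all nonnegative $X$ precisely when $\|K\|_{\ell^1} \le c$ (proved via Fourier analysis on the torus for nonnegative $K$). It then rewrites \eqref{BL222} via the substitution $(\nu_1,\nu_2)\mapsto(\nu_1-\nu_2,\nu_2)$ so that the kernel is $K=V_0B$, applies the lemma, and finishes by the duality $\sup_{\|B\|_{\ell^2}=1}\|V_0B\|_{\ell^1}=\|V_0\|_{\ell^2}$. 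This gives necessity and sufficiency simultaneously from a single ``if and only if'' characterization. Your argument instead handles the two implications separately: sufficiency by two applications of Cauchy--Schwarz (first summing over $\nu_2$, then over $\nu_1$, which is clean and sharp, yielding $c=\|V_0\|_{\ell^2}$), and necessity by plugging in the test functions $A=\ichi_{\{|\mu|_\infty\le 2N\}}$, $B=V_0\ichi_{\{|\nu|_\infty\le N\}}$, $C=\ichi_{\{|\mu|_\infty\le N\}}$, exploiting that $A\equiv 1$ on the relevant range so that the left side collapses to $(2N+1)^n S_N$, and then cancelling the $N^n$ factor against $\|A\|\|B\|\|C\|\approx N^n S_N^{1/2}$. (One minor point: you implicitly divide by $S_N^{1/2}$; this is harmless since $S_N=0$ for all $N$ would mean $V_0\equiv 0$, which is trivially in $\ell^2$.) Both approaches are valid; the paper's is more structural and repackages the problem into a known operator-norm characterization, while yours is more elementary and self-contained, with the trade-off of having to devise the specific test sequences for necessity. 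Your observation that delta-function tests only yield $V_0\in\ell^\infty$ and are insufficient is accurate and shows the right instinct.
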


\begin{proof}
We use the following fact: 
if $K$ is a nonnegative function on $\Z^n$, 
then the inequality 
\begin{equation}\label{nonnegativekernel} 
\bigg\|
\sum_{\nu_2 \in \Z^n} 
K(\nu_1 - \nu_2) X(\nu_2) 
\bigg\|_{\ell^2 _{\nu_1}(\Z^n)}
\le 
c 
\|X\|_{\ell^2 (\Z^n)}
\end{equation}
holds for all nonnegative functions $X$ on $\Z^n$  
if and only if 
$\|K\|_{\ell^1 (\Z^n)} \le c$. 
Here is a proof. 
Consider the case where 
$K(\nu)=0$ except for finitely many $\nu$'s. 
Then, 
by the $L^2$ theory 
of Fourier analysis for periodic functions, 
it is easy to see that the inequality 
\eqref{nonnegativekernel} holds for all nonnegative 
$X$ if and only if 
the function 
$k(x) = \sum_{\nu \in \Z^n} 
K(\nu) e^{2\pi i \nu \cdot x}$ 
satisfies 
$\|k\|_{L^{\infty}(Q)}\le c$. 
But since $K$ is nonnegative, we have 
$\|k\|_{L^{\infty}(Q)}=\|K\|_{\ell^1 (\Z^n)}$ 
and thus $\|K\|_{\ell^1 (\Z^n)} \le c$. 
The general case follows by a limiting argument.

Now suppose $V(\nu_1, \nu_2)=V_0(\nu_1)$ and 
$V \in \calB (\Z^n \times \Z^n)$. 
Then, by a change of variables, 
the inequality \eqref{BL222} is written as 
\[
\sum_{\nu_1, \nu_2 \in \Z^n} 
V_0 (\nu_1 - \nu_2) 
A(\nu_1)
B(\nu_1 - \nu_2)
C(\nu_2)
\le 
c 
\|A\|_{\ell^2}
\|B\|_{\ell^2}
\|C\|_{\ell^2}. 
\]
By the fact mentioned above, 
this inequality holds if and only if 
$\|V_0 B\|_{\ell^1} \le c \|B\|_{\ell^2}$, 
which is equivalent to $\|V_0\|_{\ell^2} \le c$.

The cases $V(\nu_1, \nu_2)=V_0(\nu_2)$ 
and 
$V(\nu_1, \nu_2)=V_0(\nu_1+ \nu_2)$ are 
proved in a similar way or by the use of 
Proposition \ref{propertiesB} (2). 
\end{proof}

\begin{prop}\label{productLweak}
Let $2<p_1, p_2 < \infty$, 
$1/p_1 + 1/p_2 = 1/2$, 
and let 
$f_1 \in \ell^{p_1, \infty}(\Z^d)$ and 
$f_2 \in \ell^{p_2, \infty}(\Z^d)$ 
be nonnegative sequences. 
Then the functions 
$f_1 (\nu_1) f_2 (\nu_2)$, 
$f_1 (\nu_1+ \nu_2) f_2 (\nu_2)$, 
and 
$f_1 (\nu_1) f_2 (\nu_1+ \nu_2)$ 
belong to $\calB (\Z^d\times \Z^d)$.  
\end{prop}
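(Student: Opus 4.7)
The plan is to reduce the three kernels to the tensor-product form $V(\nu_1,\nu_2) = f_1(\nu_1) f_2(\nu_2)$ by Proposition~\ref{propertiesB}(2), and then to estimate the associated trilinear sum by chaining a H\"older inequality in Lorentz spaces with the Young--O'Neil convolution inequality. The substitutions $(\nu_1,\nu_2)\mapsto(\nu_1+\nu_2,-\nu_2)$ and $(\nu_1,\nu_2)\mapsto(-\nu_1,\nu_1+\nu_2)$ permitted by Proposition~\ref{propertiesB}(2) convert the kernels $f_1(\nu_1+\nu_2)f_2(\nu_2)$ and $f_1(\nu_1)f_2(\nu_1+\nu_2)$ into $f_1(\nu_1)f_2(-\nu_2)$ and $f_1(-\nu_1)f_2(\nu_2)$, respectively; since reflection preserves the $\ell^{p_i,\infty}$ norms, it suffices to treat the tensor-product case.

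For this case and nonnegative $A,B,C$, the change of variable $\mu=\nu_1+\nu_2$ rewrites the left-hand side of \eqref{BL222} as $\sum_\mu A(\mu)\,[(f_1 B)\ast(f_2 C)](\mu)$, and Cauchy--Schwarz in $\mu$ reduces matters to proving
\[
\|(f_1 B)\ast(f_2 C)\|_{\ell^2(\Z^d)} \lesssim \|f_1\|_{\ell^{p_1,\infty}} \|f_2\|_{\ell^{p_2,\infty}} \|B\|_{\ell^2} \|C\|_{\ell^2}.
\]
I would set $q_i = 2p_i/(p_i+2)$, so that $1/q_i = 1/p_i + 1/2$ and $q_i\in(1,2)$ because $p_i>2$. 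The Lorentz-space H\"older inequality gives $\|f_i B_i\|_{\ell^{q_i,2}} \lesssim \|f_i\|_{\ell^{p_i,\infty}}\|B_i\|_{\ell^2}$ (with $B_1=B$, $B_2=C$), and the hypothesis $1/p_1+1/p_2=1/2$ translates exactly to the Young relation $1/q_1+1/q_2=3/2$, so the Young--O'Neil convolution inequality in Lorentz spaces yields
\[
\|(f_1 B)\ast(f_2 C)\|_{\ell^{2,1}} \lesssim \|f_1 B\|_{\ell^{q_1,2}}\|f_2 C\|_{\ell^{q_2,2}}.
\]
The continuous embedding $\ell^{2,1}\hookrightarrow\ell^2$ then closes the argument.

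The main technical point is aligning the Lorentz second-indices so that the two inequalities compose: the H\"older step combines the factor second-indices $\infty$ (from weak $\ell^{p_i}$) and $2$ (from $\ell^2$) into the intermediate second-index $2$, and the Young--O'Neil step then converts two such $\ell^{q_i,2}$ factors into an $\ell^{2,1}$ output, which is precisely the strongest Lorentz space embedding into $\ell^2$. If one wishes to avoid Lorentz-space machinery, an alternative is to establish the estimate for $f_i\in\ell^{p_i}$ (strong) by classical H\"older and Young inequalities, and then recover the weak version via bilinear real interpolation in the pair $(f_1,f_2)$; this detour, however, is of comparable difficulty.
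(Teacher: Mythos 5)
Your proof is correct, and it takes a genuinely different route from the paper's own argument. The paper's proof proceeds by a level-set decomposition: it partitions $\Z^d$ into the dyadic level sets $E_i(j)=\{\nu:2^{-(j+1)/p_i}<f_i(\nu)\le 2^{-j/p_i}\}$, applies the Cauchy--Schwarz inequality on each pair $(j_1,j_2)$ in the order dictated by which of $j_1,j_2$ is smaller (so that the cardinality bound $\sharp E_i(j)\lesssim 2^j$ produces an off-diagonal decay factor $2^{-|j_1-j_2|/p_i}$), and then sums via the Schur test --- entirely elementary and self-contained. You instead observe that, after the change of variables $\mu=\nu_1+\nu_2$, the trilinear sum in the tensor-product case becomes $\sum_\mu A(\mu)[(f_1B)\ast(f_2C)](\mu)$, reduce to the single estimate $\|(f_1B)\ast(f_2C)\|_{\ell^2}\lesssim\|f_1\|_{\ell^{p_1,\infty}}\|f_2\|_{\ell^{p_2,\infty}}\|B\|_{\ell^2}\|C\|_{\ell^2}$ by Cauchy--Schwarz in $\mu$, and close it by chaining O'Neil's H\"older and Young convolution inequalities in Lorentz spaces; the index bookkeeping ($1/q_i=1/p_i+1/2$, second index $2$ from H\"older, $1/q_1+1/q_2=3/2$ yielding $\ell^{2,1}\hookrightarrow\ell^2$ from Young--O'Neil) is all correct, and the reduction of the non-tensor kernels via Proposition~\ref{propertiesB}(2) and the reflection invariance of $\ell^{p,\infty}$ norms is exactly right. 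Your approach is close in spirit to the alternative sketched in the paper's remark following Proposition~\ref{weightL4weak}, which derives the needed Lorentz estimates by real interpolation of the Lebesgue-space H\"older/Young estimates rather than by citing O'Neil's theorems directly. What the paper's main proof buys is self-containment (no Lorentz-space machinery); what yours buys is brevity, by packaging the whole estimate as a convolution inequality and invoking the sharp off-the-shelf endpoint results.
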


\begin{proof} 
By Proposition \ref{propertiesB} (2), it is sufficient 
to prove that 
$f_1 (\nu_1) f_2 (\nu_2)$ belongs to 
$\calB (\Z^d \times \Z^d)$. 
Let $A, B, C$ be nonnegative functions on $\Z^d$.

We set 
\[
E_{i} (j) 
= 
\{
\nu \in \Z^d 
\mid 
2^{-(j+1)/p_i} < f_i (\nu) \le 2^{-j/p_i}
\}, 
\quad 
i=1,2, 
\;\;
j\in \Z. 
\]
Our assumption 
$f_i \in \ell^{p_i, \infty}(\Z^d)$ implies the estimate 
\begin{equation}\label{sharpEij}
\sharp (E_{i}(j)) \lesssim 2^{j}. 
\end{equation}
Since $\{E_i (j)\}_{j \in \Z}$ gives 
a decomposition of the 
set $\{ \nu \in \Z^d : f_i (\nu)>0\}$,  
the sum 
on the left hand side of \eqref{BL222} 
for $V(\nu_1, \nu_2) = f_1(\nu_1)f_2 (\nu_2)$ 
is written as 
\begin{align*}
&\sum_{\nu_1, \nu_2 \in \Z^d} 
f_1(\nu_1) f_2(\nu_2) 
A(\nu_1+ \nu_2) 
B(\nu_1)
C(\nu_2)
\\
&\approx 
\sum_{j_1, j_2 \in \Z} 
\, 
\sum_{\nu_1 \in E_1 (j_1)}\, 
\sum_{\nu_2 \in E_2 (j_2)}\, 
2^{-j_1 / p_1}
2^{-j_2 / p_2}
A(\nu_1+\nu_2)
B(\nu_1)
C(\nu_2). 
\end{align*}

Fix $j_1, j_2$ and 
consider the sum over $\nu_1\in E_1 (j_1)$ 
and $\nu_2\in E_2 (j_2)$. 
If $j_1 \le j_2$, then we apply 
the Cauchy--Schwarz 
inequality first to the sum over $\nu_2$ and 
then to the sum over $\nu_1 $ to obtain 
\begin{align*}
&\sum_{\nu_1 \in E_1 (j_1)}\, 
\sum_{\nu_2 \in E_2 (j_2)}\, 
2^{-j_1 / p_1}
2^{-j_2 / p_2}
A(\nu_1+\nu_2)
B(\nu_1)
C(\nu_2)
\\
&
\le 
\sum_{\nu_1 \in E_1 (j_1)}\, 
2^{-j_1 / p_1}
2^{-j_2 / p_2}
\|A\|_{\ell^2 (\Z^d)}
B(\nu_1)
\|C\|_{\ell^2 (E_2 (j_2))}
\\
&
\le 
2^{-j_1 / p_1}
2^{-j_2 / p_2}
( \sharp  ( E_1 (j_1) ) )^{1/2}
\|A\|_{\ell^2 (\Z^d)}
\|B\|_{\ell^2 (E_1 (j_1))}
\|C\|_{\ell^2 (E_2 (j_2))}  
\\
&\lesssim 
2^{-(j_2-j_1) / p_2}
\|A\|_{\ell^2 (\Z^d)}
\|B\|_{\ell^2 (E_1 (j_1))}
\|C\|_{\ell^2 (E_2 (j_2))}, 
\end{align*}
where the last $\lesssim $ 
follows from 
the estimate 
$\sharp (E_1 (j_1)) \lesssim 
2^{j_1}$ (see \eqref{sharpEij}) 
and the equality $1/p_1 + 1/p_2 = 1/2$.  
Similarly, if $j_1 > j_2$, then we apply 
the Cauchy--Schwarz 
inequality first to the sum over $\nu_1$ and 
then to the sum over $\nu_2$ to obtain 
the same estimate as above but with 
the factor $2^{-(j_2-j_1) / p_2}$ 
replaced by $2^{-(j_1-j_2) / p_1}$.

Thus in either case we have 
\begin{align*}
&
\sum_{\nu_1 \in E_1 (j_1)}\,  
\sum_{\nu_2 \in E_2 (j_2)}\,  
2^{-j_1/p_1}
2^{-j_2/p_2}
A(\nu_1+ \nu_2) 
B(\nu_1)
C(\nu_2)
\\
&\lesssim 
(2^{-|j_1-j_2| / p_2}
+ 2^{-|j_1-j_2| / p_1})
\|A\|_{\ell^2 (\Z^d)}
\|B\|_{\ell^2 (E_1 (j_1))}
\|C\|_{\ell^2 (E_2 (j_2))}. 
\end{align*}
By the Schur lemma, 
the sum of the above over 
${j_1, j_2\in \Z}$ 
is bounded by 
\[
\|A\|_{\ell^2 (\Z^d)}
\|B\|_{\ell^2 (E_1 (j_1)) \ell^2_{j_1}}
\|C\|_{\ell^2 (E_2 (j_2)) \ell^2_{j_2}} 
\le 
\|A\|_{\ell^2 (\Z^d)}
\|B\|_{\ell^2 (\Z^d)}
\|C\|_{\ell^2 (\Z^d)}. 
\]
(For the Schur lemma, see, e.g., 
\cite[Appendix A]{grafakos 2014m}.)
\end{proof}

\begin{prop}\label{weightL4weak}
All nonnegative functions in the class 
$\ell^{4, \infty} (\Z^d \times \Z^d)$  
belong to $\calB (\Z^d\times \Z^d)$.  
\end{prop}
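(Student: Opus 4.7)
My plan, modeled on the proof of Proposition~\ref{productLweak}, is to decompose each of $V$, $A$, $B$, and $C$ into value-level sets and to sum the resulting quadruple of pieces by a Schur-type argument. After normalizing $\|V\|_{\ell^{4,\infty}}=\|A\|_{\ell^2}=\|B\|_{\ell^2}=\|C\|_{\ell^2}=1$, I set
\[
E_j = \{(\nu_1,\nu_2) : 2^{-(j+1)}<V(\nu_1,\nu_2)\le 2^{-j}\},
\]
so that $|E_j|\lesssim 2^{4j}$ by the $\ell^{4,\infty}$ assumption and $V=\sum_{j\ge 0}V_j$ with $V_j \approx 2^{-j}\mathbf{1}_{E_j}$. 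In the same way I decompose $A=\sum_a A_a$, $B=\sum_b B_b$, $C=\sum_c C_c$, where each level-$k$ piece has $L^\infty$-size $\sim 2^{-k}$ on a support of cardinality $\lesssim 2^{2k}$.

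For each quadruple $(j,a,b,c)$ the contribution $\sum V_j A_a B_b C_c$ is controlled by $2^{-j-a-b-c}\,N_{j,a,b,c}$, where $N_{j,a,b,c}$ counts the pairs $(\nu_1,\nu_2)\in E_j$ with $\nu_1\in \supp B_b$, $\nu_2\in \supp C_c$, and $\nu_1+\nu_2\in \supp A_a$. This counting function admits the four elementary bounds
\[
N_{j,a,b,c}\le \min\bigl(|E_j|,\;|\supp A_a|\,|\supp B_b|,\;|\supp A_a|\,|\supp C_c|,\;|\supp B_b|\,|\supp C_c|\bigr),
\]
as well as refined estimates produced by applying Cauchy--Schwarz in each of the three orderings of the variables $\nu_1$, $\nu_2$, and $\mu=\nu_1+\nu_2$. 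Summing the resulting estimates over $(j,a,b,c)$ by a four-fold Schur argument, in direct analogy with the final step of the proof of Proposition~\ref{productLweak}, should yield the desired bound $\lesssim\|A\|_{\ell^2}\|B\|_{\ell^2}\|C\|_{\ell^2}$.

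The main obstacle is to arrange the Cauchy--Schwarz steps so as to extract enough geometric decay in each of the four level parameters for the four-fold sum to converge. Indeed, the trivial indicator estimate $\sum_{E_j}ABC\lesssim |E_j|^{1/4}\|A\|_{\ell^2}\|B\|_{\ell^2}\|C\|_{\ell^2}$ yields only a $j$-independent bound on the contribution of each level $E_j$ and therefore does not sum over $j$; moreover, the crude size bounds on $N_{j,a,b,c}$ leave at least one of the summations in $a,b,c$ divergent. The resolution---entirely parallel to the $2^{-|j_1-j_2|/p}$ gain exploited in the proof of Proposition~\ref{productLweak}---is to choose, in each range of relative sizes of $(j,a,b,c)$, the particular Cauchy--Schwarz ordering that targets the dominant level parameter, thereby converting the critical summation into a geometric series that Schur's lemma can close.
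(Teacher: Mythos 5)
Your proposal takes a genuinely different route from the paper's proof, and it has a gap at its crucial step. The paper does not dyadically decompose $V,A,B,C$ at all: it transfers the inequality to $\R^d$, establishes the family of Lebesgue-norm estimates \eqref{BLq0q1q2q3} on the full admissible exponent range via H\"older and Young, and then appeals to Janson's theorem on real interpolation of multilinear operators to obtain the Lorentz-norm estimate \eqref{BLLorentz}, from which the proposition follows by taking $(q_0,q_1,q_2,q_3)=(4,2,2,2)$ and $(r_0,r_1,r_2,r_3)=(\infty,\infty,2,2)$. Your plan is in effect an attempt to reprove the needed instance of this multilinear interpolation by hand.

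The gap is that the four-fold Schur sum is never actually closed, and it is not a routine analogue of the two-fold argument in Proposition~\ref{productLweak}. There the weight factors as $f_1(\nu_1)f_2(\nu_2)$, so the level set at scale $(j_1,j_2)$ is the product $E_1(j_1)\times E_2(j_2)$, and the two Cauchy--Schwarz orderings immediately yield the off-diagonal gain $2^{-|j_1-j_2|/p_i}$. Here the level set $E_j\subset\Z^d\times\Z^d$ is an arbitrary set with $|E_j|\lesssim 2^{4j}$ that need not factor, so the cardinality bound carries no information on the sizes of its one-dimensional slices, which is exactly what a Cauchy--Schwarz in a single variable would have to exploit. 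Concretely, summing your four elementary bounds over $j$ first leaves $\sum_{a,b,c}2^{-a-b-c}\min\bigl(|\supp A_a||\supp B_b|,\ |\supp A_a||\supp C_c|,\ |\supp B_b||\supp C_c|\bigr)^{3/4}$, and the crude step $\min(P,Q,R)^{3/4}\le(PQR)^{1/4}$ reduces this to a product of $\ell^1$-type quantities in the level parameters, whereas the $\ell^2$ hypotheses on $A,B,C$ control only the corresponding $\ell^2$-type quantities. Thus the $\min$ structure must be exploited nontrivially; the ``refined Cauchy--Schwarz estimates'' that would supply the needed off-diagonal decay are not specified, and the Schur sum is not verified to converge. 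The Remark following this proposition in the paper flags exactly this subtlety: the more concrete multilinear interpolation criterion of Christ fails for $d\ge 2$, and only Janson's theorem is available, so the step you leave as ``should yield the desired bound'' is genuinely delicate rather than a routine repetition of Proposition~\ref{productLweak}.
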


\begin{proof} 
By appropriately extending functions 
on $\Z^d$
and $\Z^d \times \Z^d$ to 
functions on $\R^d$
and $\R^d \times \R^d$, 
it is sufficient to prove the inequality 
\begin{equation}\label{BL4infty222}
\begin{split}
&\int_{\R^d \times \R^d} 
V(x_1, x_2)
A(x_1+x_2)
B(x_1)
C(x_2)\, dx_1 dx_2
\\
&\lesssim 
\|V\|_{
L^{4, \infty}(\R^d\times \R^d) }
\|A\|_{L^2 (\R^d)}
\|B\|_{L^2 (\R^d)}
\|C\|_{L^2 (\R^d)}  
\end{split}
\end{equation}
for nonnegative 
measurable functions $V, A, B, C$ 
on the corresponding 
Euclidean spaces.
We shall derive this inequality from 
the inequality 
\begin{equation}\label{BLq0q1q2q3}
\begin{split}
&\int_{\R^d \times \R^d} 
V(x_1, x_2)
A(x_1+x_2)
B(x_1)
C(x_2)\, dx_1 dx_2
\\
&\lesssim 
\|V\|_{
L^{q_0}(\R^d\times \R^d) }
\|A\|_{L^{q_1} (\R^d)}
\|B\|_{L^{q_2} (\R^d)}
\|C\|_{L^{q_3} (\R^d)} 
\end{split}
\end{equation}
by using real interpolation. 
It is known that 
\eqref{BLq0q1q2q3} holds if and only if 
the following two conditions are satisfied: 
\begin{align}
&\label{scaling}
\frac{2}{q_0}
+
\frac{1}{q_1}
+
\frac{1}{q_2}
+
\frac{1}{q_3}
=2, 
\\
&\label{Youngconvolution}
0\le 
\frac{1}{q_i} \le 1 - 
\frac{1}{q_0} \le 1, 
\quad i=1,2,3.    
\end{align}
For the reader's convenience, here we give a proof 
of the fact that  
\eqref{BLq0q1q2q3} holds under the assumptions 
\eqref{scaling} and 
\eqref{Youngconvolution}. 
It is sufficient to show 
\begin{equation}\label{HolderYoung}
\|A(x_1+x_2) B(x_1) C(x_2)\|_{L^{q_0'}_{x_1, x_2}} 
\lesssim 
\|A\|_{L^{q_1}}
\|B\|_{L^{q_2}}
\|C\|_{L^{q_3}}. 
\end{equation}
In the case $q_0'=\infty$, \eqref{Youngconvolution} implies 
$q_1=q_2=q_3=\infty$ and \eqref{HolderYoung} is obvious. 
We assume $q_0' < \infty$. 
Take $\alpha, \beta, \gamma, \delta \in [1, \infty]$ that 
satisfy 
$1/\delta + 1/\gamma=1$ and 
$1+ 1/\delta = 1/\alpha + 1/\beta$. 
Then writing $\widetilde{B}(\mu)= B (-\mu)$ 
and using H\"older's inequality and 
Young's inequality for convolution, we have 
\begin{align*}
&
\|A(x_1+x_2) B(x_1) C(x_2)\|_{L^{q_0'}_{x_1, x_2}} ^{q_0'}
=
\int 
(A^{q_0'}\ast  \widetilde{B}^{q_0'})(x_2)
C(x_2)^{q_0'}
\, dx_2 
\\
&
\le 
\|A^{q_0'}\ast  \widetilde{B}^{q_0'}\|_{ L^\delta }
\|C^{q_0'}\|_{ L^\gamma }
\le 
\|A^{q_0'}\|_{L^\alpha}
\|\widetilde{B}^{q_0'}\|_{L^\beta}
\|C^{q_0'}\|_{ L^\gamma }
\\
&=
\big(
\|A\|_{ L^{\alpha q_0'} }
\|B\|_{ L^{\beta q_0'} }
\|C\|_{ L^{\gamma q_0'} }
\big)^{q_0'}. 
\end{align*} 
By choosing $\alpha, \beta, \gamma$ such that 
$\alpha q_0'=q_1$, 
$\beta q_0'=q_2$, and  
$\gamma q_0'=q_3$, 
we obtain \eqref{HolderYoung} 
with the constant in $\lesssim$ equal to $1$.

From \eqref{BLq0q1q2q3}, it follows by duality 
that 
the trilinear map 
\[
T(A, B, C)(x_1, x_2)
=A (x_1+ x_2) B(x_1)C(x_2)
\]
satisfies the estimate 
\begin{equation*}
\|T(A,B,C)\|_{L^{q'_0} (\R^d\times \R^d) }
\lesssim 
\|A\|_{L^{q_1} (\R^d) }
\|B\|_{L^{q_2} (\R^d) }
\|C\|_{L^{q_3} (\R^d) }
\end{equation*}
for all $(q_i)$ 
satisfying \eqref{scaling} 
and \eqref{Youngconvolution}. 
Hence, by the real interpolation for multilinear 
operators (see Janson \cite{Janson}),  
it follows that 
if $(q_i)$ satisfy \eqref{scaling} and also 
satisfy 
the strict inequalities 
\begin{equation}\label{strict}
0<\frac{1}{q_i} < 1 - 
\frac{1}{q_0}<1, 
\quad i=1,2,3, 
\end{equation}
then the Lorentz norm estimate 
\begin{equation*} 
\|T(A,B,C)\|_{L^{q'_0, r'_0} (\R^d\times \R^d) }
\lesssim 
\|A\|_{L^{q_1, r_1} (\R^d) }
\|B\|_{L^{q_2, r_2} (\R^d) }
\|C\|_{L^{q_3, r_3} (\R^d) } 
\end{equation*}
holds for all $(r_i)$ such that 
\begin{equation}\label{Holder}
r_i\in [1, \infty], \quad 
i=0, 1, 2, 3, 
\quad \text{and}
\quad 
\frac{1}{r_0} +
\frac{1}{r_1}+ 
\frac{1}{r_2}+
\frac{1}{r_3}
=1. 
\end{equation}
By duality again, 
this implies that 
the inequality 
\begin{equation}\label{BLLorentz}
\begin{split}
&\int_{\R^d \times \R^d} 
V(x_1, x_2)
A(x_1+x_2)
B(x_2)
C(x_2)\, dx_1 dx_2
\\
&\lesssim 
\|V\|_{
L^{q_0,r_0} (\R^{2d}) }
\|A\|_{L^{q_1, r_1} (\R^{d}) }
\|B\|_{L^{q_2, r_2} (\R^{d}) }
\|C\|_{L^{q_3, r_3} (\R^{d}) } 
\end{split}
\end{equation}
holds for all $(q_i)$ and $(r_i)$ satisfying 
\eqref{scaling}, \eqref{strict}, and \eqref{Holder}. 
In particular, by taking 
$q_0=4$, 
$q_1=q_2=q_3=2$, 
$r_0=r_1=\infty$, 
and 
$r_2=r_3=2$, 
we obtain 
\begin{align*}
&
\int_{\R^d \times \R^d} 
V(x_1, x_2)
A(x_1+x_2)
B(x_1)
C(x_2)\, dx_1 dx_2
\\
&
\lesssim 
\|V\|_{
L^{4,\infty}(\R^d\times \R^d) }
\|A\|_{L^{2, \infty} (\R^d)}
\|B\|_{L^{2} (\R^d)}
\|C\|_{L^{2} (\R^d)},  
\end{align*}
which {\it a fortiori\/} implies \eqref{BL4infty222}. 
\end{proof}

\begin{rem} 
The basic idea of using real interpolation 
to derive \eqref{BLLorentz}-\eqref{Holder} 
from 
\eqref{BLq0q1q2q3} 
is given in the paper of 
Perry \cite[Appendix A]{Perry}.  
Theorem A.3 in this Appendix A, 
written by M.\ Christ, 
gives a sufficient condition 
to derive inequality of the form 
\eqref{BLLorentz}-\eqref{Holder}  
from the inequality 
of the form 
\eqref{BLq0q1q2q3}. 
In this general theorem, 
the sufficient condition is expressed in terms 
of $(q_j)$ and subspaces of $\R^{2d}$. 
If $d=1$, then 
by applying this theorem  
we can conclude that 
\eqref{BLLorentz}-\eqref{Holder} holds for 
all $(q_j)$ satisfying \eqref{strict}. 
However, if $d\ge 2$,  
the case 
\eqref{strict} 
does not 
satisfy the very condition of 
the theorem.  
\end{rem}

\begin{rem}
It is also possible to prove 
Proposition \ref{productLweak} 
by the same method as in Proof of Proposition 
\ref{weightL4weak}. 
In fact, by using H\"older's inequality 
and Young's inequality, we see that  
the inequality 
\begin{equation}\label{productweaklemma}
\begin{split}
&\int_{\R^n \times \R^n} 
f_1(x_1) f_2(x_2) 
A(x_1+ x_2) 
B(x_1)
C(x_2)
\, dx_1 dx_2
\\
&\lesssim 
\|f_1\|_{L^{p_1} }
\|f_2\|_{L^{p_2} }
\|A\|_{L^{q_1}}
\|B\|_{L^{q_2}}
\|C\|_{L^{q_3}}
\end{split}
\end{equation}
holds 
for 
\begin{align}
&
1/p_1+1/p_2+1/q_1+1/q_2+1/q_3 
=2, 
\label{HolderYoung1}
\\
&
0\le 1/p_1, 1/p_2, 1/q_1, 1/q_2, 1/q_3 \le 1, 
\label{HolderYoung2}
\\
&
0\le {1}/{q_2}+{1}/{p_1}\le 1, 
\label{HolderYoung3}
\\
&
0\le {1}/{q_3}+{1}/{p_2}\le 1. 
\label{HolderYoung4}
\end{align}
Hence, by the same argument of interpolation as in 
Proof of Proposition 
\ref{weightL4weak}, 
we see that 
\eqref{productweaklemma} holds 
with the Lebesgue norms replaced by 
appropriate Lorentz norms 
if the equality \eqref{HolderYoung1} holds 
and if all the inequalities 
\eqref{HolderYoung2},   
\eqref{HolderYoung3}, and 
\eqref{HolderYoung4} hold with strict inequalities. 
Thus, in particular, 
for $q_1=q_2=q_3=2$ and for $p_1, p_2$ satisfying 
$0<1/p_1, 1/p_2 < 1/2$ and 
$1/p_1+1/p_2=1/2$, 
we have 
\begin{align*}
&
\int_{\R^n \times \R^n} 
f_1(x_1) f_2(x_2) 
A(x_1+ x_2) 
B(x_1)
C(x_2)
\, dx_1 dx_2
\\
&
\lesssim 
\|f_1\|_{ L^{p_1,\infty} }
\|f_2\|_{ L^{p_2, \infty} }
\|A\|_{L^{2, \infty} }
\|B\|_{L^{2}}
\|C\|_{L^{2}}, 
\end{align*}
which {\it a fortiori\/} implies the 
conclusion of Proposition \ref{productLweak}.  
\end{rem}

Here we give a proof of the assertion of 
Example \ref{example-of-V}. 

\begin{proof}[Proof of Example \ref{example-of-V}] 
The function \eqref{V-n/2} is in 
$ \ell^{4, \infty} (\Z^{2n}) $ 
and hence it belongs to 
$\calB (\Z^n \times \Z^n)$ by 
Proposition 
\ref{weightL4weak}. 
The fact that the functions \eqref{Vproduct2} and 
\eqref{Vproduct1} belong to 
$\calB (\Z^n \times \Z^n)$ can be seen 
by the use of 
Propositions  
\ref{productLweak} 
and \ref{propertiesB} (4). 
\end{proof}

We introduce the following.

\begin{defn}\label{defModerate}
Let $d \in \N$. 
We say that a continuous function 
$F: \R^d \to (0, \infty)$   
is of {\it moderate class\/} if  
there exists an $N=N_{F}>0$ such that 
\begin{equation}\label{moderateL2}
F(\xi)^2 \ast \langle \xi \rangle ^{-N} 
=\int_{\R^d} 
F(\eta)^2 
\langle \xi - \eta \rangle ^{-N} 
\, d\eta 
\approx F(\xi)^2 
\; \; \text{for all} \;\; \xi\in \R^d, 
\end{equation}
where the implicit constants in 
$\approx$ may depend on $F$. 
We denote by 
$\calM (\R^d)$ the set of 
all functions on $\R^d$ of moderate class. 
\end{defn}

Here are some simple properties of the class 
$\calM (\R^d)$. 

\begin{prop}\label{moderateproperties}
\begin{enumerate}
\setlength{\itemindent}{0pt} 
\setlength{\itemsep}{3pt} 
\item 
If the relation \eqref{moderateL2} holds for an 
$N>0$, then 
the same relation, possibly with different constants 
in $\approx$, holds if $N$ is replaced by 
$N^{\prime} > \max \{N, d\}$. 
\item 
If $F \in \calM (\R^d)$ and $N>0$ satisfy 
\eqref{moderateL2}, then 
\begin{equation*}
F(\xi) \langle \zeta \rangle^{-N/2} 
\lesssim 
F(\xi+ \zeta)
\lesssim 
F(\xi) \langle \zeta \rangle^{N/2} 
\;\; \text{for all} \;\; 
\xi, \zeta \in \R^d. 
\end{equation*}
\item 
Let $d=d_1 + d_2$ with 
$d_1, d_2\in \N$. 
Then a continuous function  
$F: \R^{d} \to (0, \infty)$ belongs to the class 
$\calM (\R^d)$ if and only if 
the relation 
\begin{equation}\label{moderate-product-weight}
\int_{\R^{d_1}\times \R^{d_2}}
F(\eta_1, \eta_2)^2 
\langle \xi_1- \eta_1 \rangle^{-N_1}
\langle \xi_2- \eta_2 \rangle^{-N_2}
\, d\eta_1 d\eta_2 
\approx 
F(\xi_1, \xi_2)^2
\end{equation}
holds for any sufficiently large $N_1>0$ and $N_2>0$. 
\item 
If $d_1, d_2 \in \N$, 
$F_1 \in \calM (\R^{d_1})$, and 
$F_2 \in \calM (\R^{d_2})$, then 
the function 
$F(\xi_1, \xi_2) 
= F_1 (\xi_1) F_2(\xi_2)$ 
belongs to $\calM (\R^{d_1+d_2})$. 
\end{enumerate}
\end{prop}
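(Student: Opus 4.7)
The plan is to establish the four assertions in the order $(2)\to(1)\to(3)\to(4)$, with the submultiplicative inequality $\langle a+b\rangle \le \sqrt{2}\,\langle a\rangle\langle b\rangle$ as the main pointwise tool throughout. For (2), I would apply this inequality in the form $\langle \xi+\zeta-\eta\rangle^{-N}\le 2^{N/2}\langle\zeta\rangle^N\langle\xi-\eta\rangle^{-N}$; multiplying by $F(\eta)^2$, integrating over $\eta$, and invoking the hypothesis \eqref{moderateL2} twice yields
\[
F(\xi+\zeta)^2 \approx \int_{\R^d} F(\eta)^2\langle\xi+\zeta-\eta\rangle^{-N}\,d\eta \lesssim \langle\zeta\rangle^N \int_{\R^d} F(\eta)^2\langle\xi-\eta\rangle^{-N}\,d\eta \approx \langle\zeta\rangle^N F(\xi)^2,
\]
and the matching lower bound comes from the same estimate with the roles of $\xi$ and $\xi+\zeta$ interchanged (equivalently, $\zeta$ replaced by $-\zeta$, which leaves $\langle\zeta\rangle$ unchanged).

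For (1), the upper bound $F^2\ast\langle\cdot\rangle^{-N'}\lesssim F^2$ is immediate from $\langle\cdot\rangle^{-N'}\le\langle\cdot\rangle^{-N}$, valid since $N'\ge N$. The lower bound follows by inserting the pointwise estimate $F(\eta)^2\gtrsim F(\xi)^2\langle\xi-\eta\rangle^{-N}$ from (2) into the integral and observing that $\int_{\R^d}\langle\xi-\eta\rangle^{-(N+N')}\,d\eta$ is a finite positive constant once $N+N'>d$, which is ensured by the hypothesis $N'>d$.

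For (3), the forward direction combines two ingredients. The pointwise bound $\langle\xi_1-\eta_1\rangle^{-N_1}\langle\xi_2-\eta_2\rangle^{-N_2}\lesssim\langle(\xi_1,\xi_2)-(\eta_1,\eta_2)\rangle^{-\min(N_1,N_2)}$, which follows from $\langle(a,b)\rangle\le 2\max(\langle a\rangle,\langle b\rangle)$, transfers the upper half of \eqref{moderateL2} to \eqref{moderate-product-weight} once $\min(N_1,N_2)$ is taken large enough; for the lower half I would invoke (2) to write $F(\eta)^2\gtrsim F(\xi)^2\langle\xi-\eta\rangle^{-N_F}$ and note that $\int\langle\xi-\eta\rangle^{-N_F}\langle\xi_1-\eta_1\rangle^{-N_1}\langle\xi_2-\eta_2\rangle^{-N_2}\,d\eta$ is a positive constant independent of $\xi$ by translation invariance. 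For the converse, repeating the proof of (2) with the product weight in place of the radial one yields
\[
F(\xi_1+\zeta_1,\xi_2+\zeta_2)\lesssim F(\xi_1,\xi_2)\langle\zeta_1\rangle^{N_1/2}\langle\zeta_2\rangle^{N_2/2}\le F(\xi_1,\xi_2)\langle(\zeta_1,\zeta_2)\rangle^{(N_1+N_2)/2},
\]
and the integration trick of (1) then upgrades this pointwise comparison into $F^2\ast\langle\cdot\rangle^{-N'}\approx F^2$ for any sufficiently large $N'$.

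Finally, (4) is essentially immediate from (3). By (1) I may choose $N_i$ large enough that $F_i^2\ast\langle\cdot\rangle^{-N_i}\approx F_i^2$ for $i=1,2$; Fubini then factorises the product-weight convolution of $F(\xi_1,\xi_2)=F_1(\xi_1)F_2(\xi_2)$ as $(F_1^2\ast\langle\cdot\rangle^{-N_1})(\xi_1)(F_2^2\ast\langle\cdot\rangle^{-N_2})(\xi_2)\approx F(\xi_1,\xi_2)^2$, and (3) then concludes that $F\in\calM(\R^{d_1+d_2})$. The main technical nuisance I expect is in (3): one must keep the various exponents $N$, $N'$, $N_1$, $N_2$ consistently large so that every convolution integral converges and every pointwise bound from (2) is applicable with the exponent needed to control both directions of the claimed $\approx$.
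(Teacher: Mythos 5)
Your proof is correct and follows essentially the same strategy as the paper's (submultiplicativity of $\langle\cdot\rangle$, pointwise moderate bounds, convergence of tail integrals). The only structural difference is that you prove (2) first and then derive the lower bound of (1) from the pointwise estimate, whereas the paper proves (1) directly by convolving both sides of \eqref{moderateL2} with $\langle\cdot\rangle^{-N'}$ and using $\langle\cdot\rangle^{-N}\ast\langle\cdot\rangle^{-N'}\approx\langle\cdot\rangle^{-N}$; both routes are valid and the change of order introduces no circularity. For (3) the paper is terser, relying on the sandwich $\langle(\xi_1,\xi_2)\rangle^{-2N}\le\langle\xi_1\rangle^{-N}\langle\xi_2\rangle^{-N}\le\langle(\xi_1,\xi_2)\rangle^{-N}$ together with the stability of both relations under raising exponents, while you spell out the pointwise bounds explicitly — same substance.
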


\begin{proof}
The assertion (1) follows once 
we make the convolution of the functions in 
\eqref{moderateL2} with 
the function $\langle \xi\rangle^{-N^{\prime}}$ and 
use the fact that 
$\langle \xi \rangle^{-N} \ast 
\langle \xi \rangle^{-N^{\prime}} 
\approx 
\langle \xi \rangle^{-N}$ 
if $N^{\prime}> \max \{N, d\}$.  
The assertion (2) follows from the inequalities 
\[
\langle \xi \rangle ^{-N} 
\langle \zeta \rangle ^{-N} 
\lesssim 
\langle \xi + \zeta \rangle ^{-N} 
\lesssim 
\langle \xi \rangle ^{-N} 
\langle \zeta \rangle ^{N}. 
\]
To prove the assertion (3), 
first observe that if the relation 
\eqref{moderate-product-weight} holds 
then the same relation holds 
if $N_i$ are replaced by 
$N_i^{\prime}>\max \{N_i, d_i\}$, $i=1, 2$. 
This is proved by the same reasoning 
as in the proof of (1). 
Using this fact, the fact of (1), 
and the obvious inequalities 
\[
\langle (\xi_1, \xi_2) \rangle^{- 2N }
\le  
\langle \xi_1\rangle^{-N}
\langle \xi_2 \rangle^{-N}
\le 
\langle (\xi_1, \xi_2) \rangle^{-N} 
\le 
\langle \xi_1\rangle^{-N/2}
\langle \xi_2 \rangle^{-N/2}, 
\]
we can easily prove (3). 
Finally the assertion (4) easily follows 
from (3). 
\end{proof}

Finally we give a general result 
concerning the classes $\calB$ and $\calM$.

\begin{prop} \label{Vast}
For any $V\in \calB ( \Z^d \times \Z^d )$, 
there exists a function 
$V^{\ast} \in \calM (\R^{2d})$ 
such that 
$V(\nu_1, \nu_2) \le V^{\ast} (\nu_1, \nu_2)$ 
for all $(\nu_1, \nu_2) \in \Z^d \times \Z^d$ and 
the restriction of $V^{\ast}$ to $\Z^d \times \Z^d $ 
belongs to $\calB ( \Z^d \times \Z^d )$. 
\end{prop}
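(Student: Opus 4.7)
The plan is to construct $V^*$ by convolving the square of a piecewise-constant extension $\widetilde V$ of $V$ (analogous to the extension in Theorem \ref{main-thm-1}) with a slowly decaying polynomial kernel. Concretely, for a sufficiently large $N > 4d$, I would set
\[
V^*(\xi)^2 = C\int_{\R^d\times\R^d} \widetilde V(\eta)^2 \langle \xi-\eta\rangle^{-N}\, d\eta, \quad \xi\in\R^d\times\R^d,
\]
where the constant $C$ is chosen large enough to force pointwise majorization of $V$ on $\Z^d\times\Z^d$. The degenerate case $V\equiv 0$ I would handle separately by taking $V^*$ to be any fixed positive element of $\calM(\R^d\times\R^d)$ whose restriction to $\Z^d\times\Z^d$ lies in $\calB$; for instance, $\langle\cdot\rangle^{-L}$ for some $L > d$ is moderate (take $N_F = 2L$) and lies in $\ell^{4,\infty}(\Z^d\times\Z^d)$, hence in $\calB$ by Proposition \ref{weightL4weak}.

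Membership of $V^*$ in $\calM(\R^d\times\R^d)$ should be essentially routine: continuity and strict positivity follow from the boundedness of $\widetilde V^2$ and the $L^1$-integrability and strict positivity of $\langle\cdot\rangle^{-N}$ for $N>2d$, while the moderate identity \eqref{moderateL2} reduces to the standard kernel relation $\langle\cdot\rangle^{-N}\ast\langle\cdot\rangle^{-N} \approx \langle\cdot\rangle^{-N}$, which can be verified by splitting the convolution according to whether $|y|\le |x|/2$ or not. The pointwise majorization $V^*(\nu)\ge V(\nu)$ for $\nu\in\Z^d\times\Z^d$ is obtained by restricting the defining integral to the unit cube centered at $\nu$, on which $\widetilde V \equiv V(\nu)$ and $\langle\nu-\eta\rangle^{-N}$ is bounded below by a positive constant; a suitable choice of $C$ then handles this.

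The main obstacle is verifying that the restriction of $V^*$ to $\Z^d\times\Z^d$ still belongs to $\calB$. Partitioning the defining integral over unit cubes centered at lattice points and using the slow variation of $\langle\cdot\rangle^{-N}$ on each such cube yields the pointwise equivalence
\[
V^*(\nu)^2 \approx \sum_{\mu\in\Z^d\times\Z^d} V(\mu)^2 \langle\nu-\mu\rangle^{-N},
\qquad \nu\in\Z^d\times\Z^d.
\]
The key trick is then to apply the elementary non-negativity bound $(\sum_\mu x_\mu)^2\ge\sum_\mu x_\mu^2$ with $x_\mu=V(\mu)\langle\nu-\mu\rangle^{-N/2}$, which converts this $\ell^2$-type identity into the single-sum majorant $V^*(\nu)\lesssim \sum_\mu V(\mu)\langle\nu-\mu\rangle^{-N/2}$.

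Substituting this majorant into the left-hand side of \eqref{BL222}, interchanging the order of summation, and making the change of variables $\tau_i=\nu_i-\mu_i$ for $i=1,2$, the sum factors as
\[
\sum_{\tau_1,\tau_2\in\Z^d}\langle\tau\rangle^{-N/2} \sum_{\mu_1,\mu_2\in\Z^d} V(\mu_1,\mu_2)\, A_\tau(\mu_1+\mu_2)\, B_\tau(\mu_1)\, C_\tau(\mu_2),
\]
where $A_\tau, B_\tau, C_\tau$ denote appropriate translates of $A, B, C$. The inner sum is bounded by a constant times $\|A\|_{\ell^2}\|B\|_{\ell^2}\|C\|_{\ell^2}$ using $V\in\calB$ together with translation invariance of $\ell^2$, while the outer sum $\sum_\tau \langle\tau\rangle^{-N/2}$ converges for $N>4d$, yielding $V^*|_{\Z^d\times\Z^d}\in\calB$. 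The subtlety is that a naive $\ell^2$-type majorant coming from the squared identity does not directly plug into the trilinear inequality defining $\calB$; the trick of collapsing to a single-sum majorant, at the cost of halving the decay exponent, is what makes the argument go through.
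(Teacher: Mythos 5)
Your proposal is correct and follows essentially the same route as the paper: you build $V^*$ from an $\ell^2$-type convolution of $V$ (continuous integral against $\widetilde V^2$ vs.\ the paper's discrete sum, which are equivalent), verify moderateness via the standard kernel relation $\langle\cdot\rangle^{-N}\ast\langle\cdot\rangle^{-N'}\approx\langle\cdot\rangle^{-N}$, and pass from the $\ell^2$-convolution to the $\ell^1$-convolution $G(\nu)=\sum_\mu V(\mu)\langle\nu-\mu\rangle^{-N/2}$ via $\|\cdot\|_{\ell^2}\le\|\cdot\|_{\ell^1}$ — the same "single-sum majorant" step the paper performs. The only cosmetic difference is that the paper shows $G\in\calB$ once and for all by translating \eqref{BL222} and summing against $\langle(\mu_1,\mu_2)\rangle^{-N}$, while you substitute the majorant directly into \eqref{BL222} and then interchange sums, which unwinds to the same computation.
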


\begin{proof}
Suppose 
$V\in \calB ( \Z^d \times \Z^d )$ 
and suppose the inequality 
\eqref{BL222} holds. 
We may assume $V$ is not identically equal to $0$. 
By translation of variables, 
we see that the inequality 
\begin{equation}\label{BL222translation}
\sum_{\nu_1, \nu_2 \in \Z^d} 
V(\nu_1 - \mu_1, \nu_2 - \mu_2) 
A(\nu_1+ \nu_2) 
B(\nu_1)
C(\nu_2)
\le c 
\|A\|_{\ell^2 (\Z^d) } 
\|B\|_{\ell^2 (\Z^d) } 
\|C\|_{\ell^2 (\Z^d) } 
\end{equation} 
holds for all $(\mu_1, \mu_2) \in 
\Z^d \times \Z^d $ with the same constant $c$ as 
in \eqref{BL222}. 
Take a number $N> 2d$. 
Multiplying \eqref{BL222translation} 
by $\langle  (\mu_1, \mu_2)  \rangle ^{-N}$ 
and taking sum over $ (\mu_1, \mu_2)\in \Z^d \times \Z^d $, 
we see that the function 
\begin{align*}
G(\nu_1, \nu_2) 
&
= \sum_{\mu_1, \mu_2 \in \Z^d} 
V(\nu_1 - \mu_1, \nu_2 - \mu_2) 
\langle  (\mu_1, \mu_2)  \rangle ^{-N} 
\\
&= \sum_{\mu_1, \mu_2 \in \Z^d} 
V(\mu_1, \mu_2) 
\langle  ( \nu_1-\mu_1, \nu_2 - \mu_2)  \rangle ^{-N} 
\end{align*} 
also belongs to the class $\calB (\Z^d \times \Z^d)$. 
We shall show that the function 
\[
V^{\ast} (\xi_1, \xi_2) = 
\bigg(
\sum_{\mu_1, \mu_2 \in \Z^d} 
V (\mu_1, \mu_2)^{2} 
\langle  ( \xi_1 - \mu_1, \xi_2 - \mu_2)  \rangle ^{-2N} 
\bigg)^{1/2}, 
\quad (\xi_1, \xi_2) \in \R^d \times \R^d, 
\]
has the desired properties.  
First, $V^{\ast}$ is a positive continuous function on 
$\R^{2d}$. 
For $N^{\prime}> 2N$, we have 
\begin{align*}
&
\int_{\R^{2d}}
V^{\ast} (\xi_1, \xi_2) ^{2} 
\langle (\eta_1 - \xi_1, \eta_2 - \xi_2) \rangle^{ -N^{\prime} } \, 
d\xi_1 d\xi_2
\\
&
=
\int_{\R^{2d}}
\sum_{\mu_1, \mu_2 \in \Z^d} 
V (\mu_1, \mu_2)^{2} 
\langle  ( \xi_1 - \mu_1, \xi_2 - \mu_2)  \rangle ^{-2N} 
\langle (\eta_1 - \xi_1, \eta_2 - \xi_2) \rangle^{ -N^{\prime} } \, 
d\xi_1 d\xi_2
\\
&
\approx 
\sum_{\mu_1, \mu_2 \in \Z^d} 
V (\mu_1, \mu_2)^{2} 
\langle ( \eta_1 - \mu_1, \eta_2 - \mu_2) \rangle^{ -2N } 
= V^{\ast}(\eta_1, \eta_2)^2. 
\end{align*}
Hence $V^{\ast} \in \calM (\R^{2d})$. 
Obviously 
$V^{\ast}(\nu_1, \nu_2) \ge V(\nu_1, \nu_2)$. 
Finally, 
since 
$V^{\ast}(\nu_1, \nu_2) \le G (\nu_1, \nu_2)$ 
(because $\|\cdot \|_{\ell^2} \le \|\cdot \|_{\ell^1}$) 
and since 
$G\in \calB (\Z^d \times \Z^d)$, the restriction of 
$V^{\ast}$ to $\Z^d \times \Z^d $ also belongs to 
$\calB (\Z^d \times \Z^d)$. 
\end{proof}

\section{Main results}\label{sectionMain}

\subsection{Key proposition}
\label{subsectionKeyLemma}

Proposition \ref{main-prop} to be given below  
plays a crucial role in our argument. 
In fact, it already contains the essential 
part of Theorem \ref{main-thm-1} (2)  
and Theorems \ref{main-thm} and 
\ref{main-thm-2} that will be given in 
Subsections \ref{subsectionMainTh} and 
\ref{subsectionvariant}. 
The basic idea of 
the arguments of 
Subsections 
\ref{subsectionKeyLemma}--
\ref{subsectionvariant} 
goes back to 
Boulkhemair 
\cite[Theorem 5]{boulkhemair 1995}.

\begin{prop}
\label{main-prop}
Let $W \in \calM (\R^{2n})$ and suppose 
the restriction of $W$ to $\Z^n \times \Z^n$ belongs 
to the class $\calB (\Z^n \times \Z^n)$. 
For $j = 1, \dots, n$, 
let 
$R_{0,j}, R_{1,j}, R_{2,j} \in [1, \infty)$, 
$1 \leq r_j \leq 2$,
$2 \le p_{1, j}, p_{2, j} \le \infty$, 
and $1/r_j=1/p_{1, j}+1/p_{2, j}$.   
Suppose $\sigma$ is 
a bounded continuous function on 
$(\R^n)^3$ such that  
$\supp \calF \sigma \subset 
\prod_{i=0}^2 ( \prod_{j=1}^n [-R_{i,j},R_{i,j}] )$. 
Then 
\begin{equation}\label{conclusionProp}
\begin{split}
&
\left| \int_{\R^n} 
T_{ \sigma }( f_1, f_2 )(x) 
g(x) 
\, dx \right|
\\
&\lesssim
\bigg(\prod_{j=1}^n 
R_{0, j}^{1/2}
R_{1, j}^{ 1/p_{1, j} }
R_{2, j}^{ 1/p_{2, j} }
\bigg) 
\big\|
W(\xi_1, \xi_2)^{-1} 
\sigma (x,\xi_1,\xi_2) \big\|_{ L^{2}_{ul} ((\R^{n})^3)}
\\
&\quad\times
\| f_1 \|_{L^2} \| f_2 \|_{L^2} \| g \|_{
(L^2, 
\ell^{r_1^\prime} \cdots 
\ell^{r_n^\prime}) }. 
\end{split}
\end{equation}
\end{prop}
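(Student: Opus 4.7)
My strategy follows Boulkhemair's Gabor-frame approach from \cite{boulkhemair 1995}, adapted to the bilinear setting. First, I decompose the symbol via a smooth frequency partition of unity. Fix $\phi \in C_c^\infty(\R^n)$ with support in a unit box and $\sum_{\nu \in \Z^n}\phi(\xi-\nu)=1$, and write $\sigma = \sum_{\nu_1,\nu_2\in\Z^n}\sigma_{\nu_1,\nu_2}$ with $\sigma_{\nu_1,\nu_2}(x,\xi_1,\xi_2) := \sigma(x,\xi_1,\xi_2)\phi(\xi_1-\nu_1)\phi(\xi_2-\nu_2)$ and $f_{i,\nu_i} := \tilde\phi(D-\nu_i)f_i$ where $\tilde\phi \equiv 1$ on $\supp\phi$. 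Then $T_\sigma(f_1,f_2) = \sum_{\nu_1,\nu_2}T_{\sigma_{\nu_1,\nu_2}}(f_{1,\nu_1},f_{2,\nu_2})$, and by the moderateness of $W$ (Proposition \ref{moderateproperties}(2)), $\|\sigma_{\nu_1,\nu_2}\|_{L^2_{ul}} \lesssim W(\nu_1,\nu_2) \|W^{-1}\sigma\|_{L^2_{ul}}$.

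Next, I expand each $\sigma_{\nu_1,\nu_2}$ as a Fourier series in $(\xi_1,\xi_2)$ on the box $(\nu_1+[-\pi,\pi]^n)\times(\nu_2+[-\pi,\pi]^n)$. Substituting into the definition of $T_{\sigma_{\nu_1,\nu_2}}$ and using Fourier inversion on each $f_{i,\nu_i}$ (whose Fourier support lies in the fundamental domain) yields the Gabor-like identity
\[
T_{\sigma_{\nu_1,\nu_2}}(f_{1,\nu_1},f_{2,\nu_2})(x) = e^{ix\cdot(\nu_1+\nu_2)}\sum_{k_1,k_2\in\Z^n}c_{\nu,k}(x)\,h_{1,\nu_1}(x+k_1)\,h_{2,\nu_2}(x+k_2),
\]
where $h_{i,\nu_i}(x) := e^{-ix\cdot\nu_i}f_{i,\nu_i}(x)$ has Fourier support in a unit box at the origin, and Parseval gives $\sum_k|c_{\nu,k}(x)|^2 \approx \|\sigma_{\nu_1,\nu_2}(x,\cdot,\cdot)\|_{L^2_{\xi_1,\xi_2}}^2$. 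The explicit phase $e^{ix\cdot(\nu_1+\nu_2)}$ will be absorbed into $g$ as a modulation, preserving its amalgam norm. Applying Cauchy--Schwarz in $(k_1,k_2)$ together with the pointwise sampling bound $\sum_k|h_{i,\nu_i}(x+k)|^2 \lesssim \|h_{i,\nu_i}\|_{L^2}^2 = \|f_{i,\nu_i}\|_{L^2}^2$ (a consequence of unit-box Fourier localization via Plancherel on the torus) reduces the pairing to $\|f_{1,\nu_1}\|_{L^2}\|f_{2,\nu_2}\|_{L^2}\int K_{\nu_1,\nu_2}(x)|g(x)|\,dx$ with $K_{\nu_1,\nu_2}^2 := \sum_k|c_{\nu,k}|^2$.

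The third step is to estimate $\int K_{\nu_1,\nu_2}|g|\,dx$ and extract the $R$-factors. Since $K_{\nu_1,\nu_2}^2$ is bandlimited in $x$ with spectrum in $\prod_j[-2R_{0,j},2R_{0,j}]$ (from the Fourier support of $\sigma$ in $x$), the factor $\prod_jR_{0,j}^{1/2}$ arises from per-coordinate Bernstein inequalities converting $L^\infty$-bounds to $L^2_{ul}$-bounds. The factors $R_{i,j}^{1/p_{i,j}}$ come from the effective support of $c_{\nu,k}$ in $k_i$, which is $|k_{i,j}| \lesssim R_{i,j}$ due to Bernstein applied in $\xi_i$ (using the Fourier support of $\sigma$ in $\xi_i$). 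Iterated H\"older in each coordinate direction $j$ with exponents $(\infty, p_{1,j}, p_{2,j}, r'_j)$ summing to $1$ (recalling $1/p_{1,j}+1/p_{2,j}=1/r_j$) distributes these factors and pairs $g$ with its amalgam $(L^2,\ell^{r'_1}\cdots\ell^{r'_n})$-norm.

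Finally, summing over $(\nu_1,\nu_2)$ requires the $\calB$-class property. Setting $B(\nu_1) = \|f_{1,\nu_1}\|_{L^2}$, $C(\nu_2) = \|f_{2,\nu_2}\|_{L^2}$, and identifying a $g$-dependent quantity $A(\nu_0)$ indexed by $\nu_0 = \nu_1+\nu_2$ (arising from the frequency-localized pairing of $g$ with the $\nu_1+\nu_2$-centered Fourier support of $T_{\sigma_{\nu_1,\nu_2}}(f_{1,\nu_1},f_{2,\nu_2})$) whose $\ell^2$-norm is controlled by $\|g\|_{(L^2,\ell^{r'_1}\cdots\ell^{r'_n})}$, the hypothesis $W|_{\Z^n\times\Z^n}\in\calB(\Z^n\times\Z^n)$ gives $\sum_{\nu_1,\nu_2} W(\nu_1,\nu_2)A(\nu_1+\nu_2)B(\nu_1)C(\nu_2) \lesssim \|A\|_{\ell^2}\|B\|_{\ell^2}\|C\|_{\ell^2}$, with $\|B\|_{\ell^2}\|C\|_{\ell^2}\lesssim\|f_1\|_{L^2}\|f_2\|_{L^2}$ by Plancherel on the frequency partition. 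The main obstacle I anticipate is precisely identifying $A(\nu_0)$ so that $\|A\|_{\ell^2}\lesssim\|g\|_{(L^2,\ell^{r'_1}\cdots\ell^{r'_n})}$: this matching between the $\ell^2$-summation required by $\calB$ on the frequency lattice and the physical-space mixed amalgam norm of $g$, combined with the careful coordinatewise distribution of the Bernstein and H\"older factors, is the technical heart of the argument.
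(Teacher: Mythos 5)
Your outline follows the same Boulkhemair-style phase-space decomposition that the paper uses, and most of the preliminary steps (frequency windowing of $\sigma$ on the $\Z^n\times\Z^n$ lattice, Fourier-series expansion in $(\xi_1,\xi_2)$, the sampling bound for band-limited $h_{i,\nu_i}$, and the reduction $\|\sigma_{\nu_1,\nu_2}\|_{L^2_{ul}}\lesssim W(\nu_1,\nu_2)\|W^{-1}\sigma\|_{L^2_{ul}}$) are sound. But there is a genuine gap, which you partially flag yourself in the last sentence, and it is not a mere technicality.

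The problem is that you apply Cauchy--Schwarz in $(k_1,k_2)$ and the pointwise sampling bound \emph{before} pairing with $g$. This collapses the pairing to $\|f_{1,\nu_1}\|_{L^2}\|f_{2,\nu_2}\|_{L^2}\int K_{\nu_1,\nu_2}(x)|g(x)|\,dx$, and the crucial modulation $e^{ix\cdot(\nu_1+\nu_2)}$ carried by $T_{\sigma_{\nu_1,\nu_2}}(f_{1,\nu_1},f_{2,\nu_2})$ is destroyed the moment you take $|g|$. After that, there is nothing left in the $\nu$-sum that could serve as an $\ell^2$-bounded $A(\nu_1+\nu_2)$: the only surviving $\nu$-dependence in the $g$-pairing is through $K_{\nu_1,\nu_2}$, and the resulting sum has the form $\sum_{\nu_1,\nu_2}W(\nu_1,\nu_2)B(\nu_1)C(\nu_2)\cdot(\text{const})$, which is the $\calB$-inequality \eqref{BL222} with $A\equiv$ constant; but a nonzero constant is not in $\ell^2(\Z^n)$, so this cannot close. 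In short, the $\nu_0$-oscillation is precisely what makes the $\calB$-class hypothesis applicable, and your reduction throws it away.

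A related symptom: Cauchy--Schwarz in $k_i$ followed by a crude count of effective indices $|k_{i,j}|\lesssim R_{i,j}$ can only produce $\prod_jR_{i,j}^{1/2}$, not $\prod_jR_{i,j}^{1/p_{i,j}}$. To get the $p_{i,j}$-exponents, one needs to apply a Hausdorff--Young inequality with exponent $p_{i,j}$ in the $\nu_{0,j}$-variable; and Hausdorff--Young can only see a gain if the oscillations $e^{i\nu_0\cdot\xi_i}$ are still present. The paper's proof is organized precisely to keep all three phase factors alive until the very last step: the ``$A$''-quantity $A_{\boldsymbol{\alpha}}(\nu_3,\nu_0)$ retains the oscillation $e^{i\nu_3\cdot x}$, $\nu_3=\nu_1+\nu_2$, and becomes $\ell^2_{\nu_3}$-bounded via Parseval; the ``$B$'' and ``$C$'' quantities retain $e^{i\nu_0\cdot\xi_1}$ and $e^{i\nu_0\cdot\xi_2}$ and are controlled in $\ell^{p_{1,j}}_{\nu_{0,j}}$ and $\ell^{p_{2,j}}_{\nu_{0,j}}$ via Hausdorff--Young; and the $\calB$-estimate is applied at fixed $\nu_0$ to these \emph{two-variable} quantities, with the final $\nu_0$-sum closed by H\"older with exponents $1/r'_j+1/p_{1,j}+1/p_{2,j}=1$. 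To repair your argument you would need to postpone all Cauchy--Schwarz and absolute-value steps until after the Parseval/Hausdorff--Young reductions, which amounts to rebuilding the proof along the paper's lines.
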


\begin{proof}
We rewrite the integral on the 
left hand side of \eqref{conclusionProp}. 
Take a function 
$\kappa \in \calS(\R)$ such that 
$\widehat{\kappa}=1$ on $[-1, 1]$ and 
define the functions $\theta_i$, $i=0, 1, 2$, 
by 
\begin{equation*}
\theta_{i} 
(\zeta_1, \dots, \zeta_n) 
= 
R_{i,1} \cdots R_{i,n} 
\kappa (R_{i,1}\zeta_1) \cdots 
\kappa (R_{i,n}\zeta_n), 
\quad 
(\zeta_1, \dots, \zeta_n) \in \R^n, 
\end{equation*}
Then 
$\widehat {\theta_0} \otimes 
\widehat {\theta_1} \otimes \widehat {\theta_2} = 1$ 
on $\supp \calF \sigma$ 
and hence $\sigma$ can be written as 
\begin{align*}
&
\sigma (x, \xi_1, \xi_2)
=
\int_{(\R^{n})^3} 
\sigma (y, \eta_1, \eta_2)
\theta_0(x-y)
\theta_1(\xi_1-\eta_1)
\theta_2(\xi_2-\eta_2)
\, dyd\eta_1d\eta_2. 
\end{align*}
Thus  
the integral on the left hand side of 
\eqref{conclusionProp} is written as 
\begin{align}
\label{beginning}
\begin{split}
I
&:=
(2\pi)^{2n}
\int_{\R^n}
T_{ \sigma } ( f_1, f_2 )(x)
g (x)
\, dx
\\
&=
\int_{(\R^{n})^6} e^{ i x \cdot (\xi_1+\xi_2)}
\sigma (y, \eta_1, \eta_2)
\\
&\hspace{50pt}\times
\theta_0(x-y) g (x)
\theta_1(\xi_1-\eta_1) \widehat{f_1}(\xi_1) 
\theta_2(\xi_2-\eta_2) \widehat{f_2}(\xi_2)
\, dX, 
\end{split}
\end{align}
where 
$ dX= dx \, d\xi_1 \, d\xi_2 \, 
dy \, d\eta_1 \, d\eta_2$.

Recall that 
$Q = [-1/2, 1/2)^n$ is 
the $n$-dimensional unit cube. 
Since  
$\R^n$ is a disjoint union of the cubes 
$\tau + Q$, $\tau \in \Z^n$, 
integral of a function on $\R^n$ can be written as 
\[
\int_{\R^n} F(x)\, dx = 
\sum_{\tau \in \Z^n} \int_{Q} F(x+ \tau)\, dx. 
\]
By using this formula, we rewrite the integral in 
\eqref{beginning} as 
\begin{align*}
I=
\sum_{ \boldsymbol{\nu},\, 
\boldsymbol{\mu} \in (\Z^{n})^3 }
\int_{Q^{6}} 
&e^{ i (x+\nu_0) 
\cdot (\xi_1+\nu_1+\xi_2+\nu_2)}
\sigma (y+\mu_0, \eta_1+\mu_1, \eta_2+\mu_2)
\\
&\times
\theta_0(x+\nu_0-y-\mu_0) g(x+\nu_0)  
\\
&\times
\theta_1(\xi_1+\nu_1-\eta_1-\mu_1) 
\widehat{f_1}(\xi_1+\nu_1)
\\
&\times
\theta_2(\xi_2+\nu_2-\eta_2-\mu_2) 
\widehat{f_2}(\xi_2+\nu_2) 
	\, dX, 
\end{align*}
where 
$\boldsymbol{\nu} 
=(\nu_0,\nu_1,\nu_2), 
\boldsymbol{\mu} 
= (\mu_0,\mu_1,\mu_2) \in (\Z^{n})^3$.

We rewrite the exponential term as 
\begin{align*}
e^{ i (x+\nu_0) \cdot (\xi_1+\nu_1+\xi_2+\nu_2)}
=
e^{ i (\nu_1+\nu_2) \cdot x}
e^{ i \nu_0 \cdot \xi_1}
e^{ i \nu_0 \cdot \xi_2}
e^{ i \nu_0 \cdot (\nu_1+\nu_2)}
\sum_{ 
\alpha = \beta + \gamma } 
\frac{i^{|\alpha|}}{\beta! \gamma! }
x^{\alpha} \xi_1^{\beta} \xi_2^{\gamma}.
\end{align*}
Now the variables $x, \xi_1, \xi_2$ 
are separated and 
$I$ is written as 
\begin{align*}
I=
\sum_{ \boldsymbol{\nu},\, \boldsymbol{\mu} \in (\Z^{n})^3 }
&
\sum_{ 
\alpha = \beta + \gamma } 
\frac{i^{|\alpha|}}{\beta! \gamma!}
e^{ i \nu_0 \cdot (\nu_1+\nu_2)} 
\int_{Q^{3}}	
\sigma (y+\mu_0, \eta_1+\mu_1, \eta_2+\mu_2)
\\ \times \Bigg( 
&\int_{Q} e^{ i (\nu_1+\nu_2) \cdot x} 
	\theta_0(x+\nu_0-y-\mu_0) g(x+\nu_0) 
    x^{\alpha}\, dx \Bigg)
\\ \times \Bigg( 
&\int_{Q} e^{ i \nu_0 \cdot \xi_1} 
	\theta_1(\xi_1+\nu_1-\eta_1-\mu_1) 
\widehat{f_1}(\xi_1+\nu_1) \xi_1^{\beta}
	d\xi_1 \Bigg)
\\ \times \Bigg( 
&\int_{Q} e^{ i \nu_0 \cdot \xi_2} 
	\theta_2(\xi_2+\nu_2-\eta_2-\mu_2) 
\widehat{f_2}(\xi_2+\nu_2) \xi_2^{\gamma}
	d\xi_2 \Bigg)
\, dy d\eta_1 d\eta_2.
\end{align*}

We take a sufficiently large even positive integer $N$. 
Then, since $\langle z \rangle^N$ is 
a polynomial of $z$ of order $N$, we can write 
\begin{equation}\label{anglez}
\langle \nu_0 - \mu_0 \rangle^N
=
\sum_{ 
|\alpha_1+ \alpha_2+ \alpha_3|\leq N } 
C_{\alpha_1,\alpha_2,\alpha_3} 
(x+\nu_0 - y - \mu_0)^{\alpha_1} 
x^{\alpha_2} y^{\alpha_3} 
\end{equation}
and hence 
\begin{align*}
&\theta_0 (x+\nu_0-y-\mu_0) 
\\
&=
\langle \nu_0 - \mu_0 \rangle ^{-N}
\theta_0 (x+\nu_0-y-\mu_0)
\sum_{ 
|\alpha_1+ \alpha_2 +\alpha_3|\leq N } 
C_{\alpha_1,\alpha_2,\alpha_3} 
(x+\nu_0 - y - \mu_0)^{\alpha_1}  
x^{\alpha_2} y^{\alpha_3} 
\\
&=
\langle \nu_0 - \mu_0 \rangle ^{-N}
\sum_{ 
|\alpha_1+ \alpha_2+ \alpha_3|\leq N }  
C_{\alpha_1,\alpha_2,\alpha_3} 
\widetilde{\theta_0^{\alpha_1}}(x+\nu_0-y-\mu_0) 
x^{\alpha_2} y^{\alpha_3},   
\end{align*}
where 
$
\widetilde{\theta_0^{\alpha_1}}(z)
=
\theta_0 (z) z^{\alpha_1}$. 
We also rewrite the 
$\theta_1 (\dots)$ and $\theta_2 (\dots)$ 
in the same way. 
Thus we obtain 
\begin{align*}
I&=
\sum_{ 
\alpha = \beta + \gamma } \, 
\sum_{ |\alpha_1 + \alpha_2 +\alpha_3| \leq N }\, 
\sum_{ |\beta_1+ \beta_2+\beta_3| \leq N }\, 
\sum_{ |\gamma_1+ \gamma_2+ \gamma_3| \leq N }\, 
\frac{i^{|\alpha|}
C_{\boldsymbol{\alpha},\boldsymbol{\beta},\boldsymbol{\gamma}}
}{\beta! \gamma!}\, 
e^{i \nu_0 \cdot (\nu_1 + \nu_2)}
\\
& 
\times 
\sum_{ \boldsymbol{\nu},\, \boldsymbol{\mu} \in (\Z^{n})^3 }
\int_{Q^{3}}
	\sigma (y+\mu_0, \eta_1+\mu_1, \eta_2+\mu_2) 
y^{\alpha_3} \eta_1^{\beta_3} \eta_2^{\gamma_3}
\\
&
\qquad \times 
\langle \nu_0 - \mu_0 \rangle^{-N}
\langle \nu_1 - \mu_1 \rangle^{-N}
\langle \nu_2 - \mu_2 \rangle^{-N}
\\
&\qquad\times
\Bigg( \int_{Q} e^{ i (\nu_1+\nu_2) \cdot x} \, 
	\widetilde{\theta_0^{\alpha_1}}(x+\nu_0-y-\mu_0) 
g(x+\nu_0)  x^{\alpha+\alpha_2}
	\, dx \Bigg)
\\
&\qquad\times
\Bigg( \int_{Q} e^{ i \nu_0 \cdot \xi_1} \, 
	\widetilde{\theta_1^{\beta_1}}
(\xi_1+\nu_1-\eta_1-\mu_1) 
	\widehat{f_1}(\xi_1+\nu_1) \xi_1^{\beta+\beta_2}
	\, d\xi_1 \Bigg)
\\
&\qquad\times
\Bigg( \int_{Q} e^{ i \nu_0 \cdot \xi_2} \, 
	\widetilde{\theta_2^{\gamma_1}}
(\xi_2+\nu_2-\eta_2-\mu_2) 
	\widehat{f_2}(\xi_2+\nu_2) \xi_2^{\gamma+\gamma_2}
	\, d\xi_2 \Bigg)
\, dy d\eta_1 d\eta_2
\\
&
= \sum_{\boldsymbol{\alpha},\boldsymbol{\beta},\boldsymbol{\gamma}}
\frac{i^{|\alpha|}
C_{\boldsymbol{\alpha},\boldsymbol{\beta},\boldsymbol{\gamma}}
}{\beta! \gamma!}\, 
e^{i \nu_0 \cdot (\nu_1 + \nu_2)}
I_{\boldsymbol{\alpha},\boldsymbol{\beta},\boldsymbol{\gamma}}, 
\end{align*}
where 
$C_{\boldsymbol{\alpha},\boldsymbol{\beta},\boldsymbol{\gamma}}
=C_{\alpha_1, \alpha_2, \alpha_3} 
C_{\beta_1, \beta_2, \beta_3} 
C_{\gamma_1, \gamma_2, \gamma_3}$ 
is the product of the constants in \eqref{anglez}, 
$I_{\boldsymbol{\alpha},\boldsymbol{\beta},\boldsymbol{\gamma}}$ 
denotes the part 
$\sum_{\boldsymbol{\nu}, 
\boldsymbol{\mu}} \int_{Q^3} 
\dots dyd\eta_1 d \eta_2$ of the formula, 
and 
$\boldsymbol{\alpha} = (\alpha, \alpha_1, \alpha_2, \alpha_3)$, 
$\boldsymbol{\beta} 
= (\beta, \beta_1, \beta_2, \beta_3)$, 
$\boldsymbol{\gamma} = 
(\gamma, \gamma_1, \gamma_2, \gamma_3)$.

Now we shall estimate $I$. 
Notice that in the last expression of $I$, 
the sums over 
$\alpha_i, \beta_i, \gamma_i$ are  
taken over finite sets and 
the sum over 
$\alpha, \beta, \gamma \in (\N_0)^n$, 
$\alpha=\beta+\gamma$,   
has the factor $1/(\beta! \gamma!) $.  
Hence, in order to prove the estimate 
for $I$,  
it is sufficient to show that 
$I_{\boldsymbol{\alpha}, 
\boldsymbol{\beta}, 
\boldsymbol{\gamma}}$ 
is bounded by the right hand side of 
\eqref{conclusionProp}
uniformly in 
$\alpha, \beta, \gamma \in (\N_0)^n$.

Using the obvious estimate 
$|y^{\alpha_3} \eta_1^{\beta_3} \eta_2^{\gamma_3}| \leq 1$ 
for $y, \eta_1, \eta_2 \in Q$ 
and using 
the Cauchy--Schwarz inequality  
with respect to the integral over $y,\eta_1,\eta_2$, 
we obtain 
\begin{align}
\label{beforepeetre}
\begin{split}
&
\big|
I_{\boldsymbol{\alpha},\boldsymbol{\beta},\boldsymbol{\gamma}}
\big|
\\
&\leq 
\sum_{ \boldsymbol{\nu},\, \boldsymbol{\mu} \in (\Z^{n})^3 }
W (\mu_1, \mu_2)^{-1}
\| \sigma (y+\mu_0, \eta_1+\mu_1, \eta_2+\mu_2) 
\|_{L^2_{y,\eta_1,\eta_2}(Q^3)}
\\
&\quad\times
W (\mu_1, \mu_2) 
\langle \nu_0 - \mu_0 \rangle^{-N}
\langle \nu_1 - \mu_1 \rangle^{-N}
\langle \nu_2 - \mu_2 \rangle^{-N}
\\
&\quad 
\times 
\left\|\int_{Q} e^{ i (\nu_1+\nu_2) \cdot x} 
\widetilde{\theta_0^{\alpha_1}}(x+\nu_0-y-\mu_0) 
g(x+\nu_0) x^{\alpha+\alpha_2}
\, dx \right\|_{L^2_{y}(Q)}
\\
&\quad\times
\left\| \int_{Q} e^{ i \nu_0 \cdot \xi_1} 
\widetilde{\theta_1^{\beta_1}}
(\xi_1+\nu_1-\eta_1-\mu_1) 
\widehat{f_1}(\xi_1+\nu_1) \xi_1^{\beta+\beta_2}
\, d\xi_1 \right\|_{L^2_{\eta_1}(Q)}
\\
&\quad\times
\left\| \int_{Q} e^{ i \nu_0 \cdot \xi_2} 
\widetilde{\theta_2^{\gamma_1}}
(\xi_2+\nu_2-\eta_2-\mu_2) 
\widehat{f_2}(\xi_2+\nu_2) 
\xi_2^{\gamma+\gamma_2}
\, d\xi_2 \right\|_{L^2_{\eta_2}(Q)}.
\end{split}
\end{align}

By virtue of the properties of the 
moderate function $W$ as given 
in Proposition \ref{moderateproperties}, (2) and (3), 
we have 
\begin{equation}\label{vsigmaL2ul}
\begin{split}
&
\sup_{\mu_0, \mu_1, \mu_2}
\big\{
W (\mu_1, \mu_2)^{-1}
\| \sigma (y+\mu_0,\eta_1+\mu_1,\eta_2+\mu_2) 
\|_{ L^{2}_{y, \eta_1, \eta_2}(Q^3) }
\big\}
\\
&\approx 
\|
W (\xi_1, \xi_2)^{-1} 
\sigma (x, \xi_1, \xi_2)
\|_{L^2_{ul} ((\R^n)^3)} 
\end{split} 
\end{equation}
and 
\begin{equation}\label{moderate2}
\big\|
W (\mu_1, \mu_2)  
\langle \nu_0 - \mu_0 \rangle^{-N}
\langle \nu_1 - \mu_1 \rangle^{-N}
\langle \nu_2 - \mu_2 \rangle^{-N}
\big\|
_{
\ell^2_{\mu_0} 
\ell^2_{\mu_1}
\ell^2_{\mu_2}
}
\approx 
W (\nu_1, \nu_2)  
\end{equation}
if $N$ is chosen sufficiently large. 
Hence, applying the Cauchy--Schwarz inequality 
to the sum over 
$\boldsymbol{\mu}=(\mu_0, \mu_1, \mu_2)$ 
in \eqref{beforepeetre}, 
and 
using \eqref{vsigmaL2ul} and 
\eqref{moderate2},  
we obtain 
\begin{equation}
\label{beforeABC}
\begin{split}
\big|
I_{\boldsymbol{\alpha},\boldsymbol{\beta},\boldsymbol{\gamma}}
\big|
&\lesssim 
\|
W (\xi_1, \xi_2)^{-1} 
\sigma (x, \xi_1, \xi_2)
\|_{L^2_{ul} ((\R^n)^3) }
\, \sum_{ \nu_0, \nu_1, \nu_2  \in \Z^{n} }
W (\nu_1, \nu_2) 
\\
&\quad\times
\left\|\int_{Q} e^{ i (\nu_1+\nu_2) \cdot x} 
\widetilde{\theta_0^{\alpha_1}}(x+\nu_0-y-\mu_0) 
g(x+\nu_0) x^{\alpha+\alpha_2}
\, dx\right\|_{L^2_{y}(Q) \ell^2_{\mu_0}}
\\
&\quad\times
\left\| \int_{Q} e^{ i \nu_0 \cdot \xi_1} 
\widetilde{\theta_1^{\beta_1}}(\xi_1+\nu_1-\eta_1-\mu_1) 
\widehat{f_1}(\xi_1+\nu_1) \xi_1^{\beta+\beta_2}
\, d\xi_1 \right\|_{L^2_{\eta_1}(Q) \ell^2_{\mu_1}}
\\
&\quad\times
\left\| \int_{Q} e^{ i \nu_0 \cdot \xi_2} 
\widetilde{\theta_2^{\gamma_1}}(\xi_2+\nu_2-\eta_2-\mu_2) 
\widehat{f_2}(\xi_2+\nu_2) \xi_2^{\gamma+\gamma_2}
\, d\xi_2 \right\|_{L^2_{\eta_2}(Q) \ell^2_{\mu_2}}.
\end{split}
\end{equation}
In what follows, we will simply write
\begin{align*}
A_{\boldsymbol{\alpha}}(\nu_3,\nu_0)
&=
\left\|\int_{Q} e^{ i \nu_3 \cdot x} 
\widetilde{\theta_0^{\alpha_1}}(x+\nu_0-y-\mu_0) 
g(x+\nu_0) x^{\alpha+\alpha_2}
\, dx\right\|_{L^2_{y}(Q) \ell^2_{\mu_0}}, 
\\
B_{\boldsymbol{\beta}} (\nu_0,\nu_1)
&=
\left\| \int_{Q} e^{ i \nu_0 \cdot \xi_1} 
\widetilde{\theta_1^{\beta_1}}(\xi_1+\nu_1-\eta_1-\mu_1) 
\widehat{f_1}(\xi_1+\nu_1) \xi_1^{\beta+\beta_2}
\, d\xi_1 \right\|_{L^2_{\eta_1}(Q) \ell^2_{\mu_1}}, 
\\
C_{\boldsymbol{\gamma}}(\nu_0,\nu_2)
&=
\left\| \int_{Q} e^{ i \nu_0 \cdot \xi_2} 
\widetilde{\theta_2^{\gamma_1}}(\xi_2+\nu_2-\eta_2-\mu_2) \widehat{f_2}(\xi_2+\nu_2) \xi_2^{\gamma+\gamma_2}
\, d\xi_2 \right\|_{L^2_{\eta_2}(Q) \ell^2_{\mu_2}}.  
\end{align*}
Thus the inequality 
\eqref{beforeABC} is written as 
\begin{equation}\label{IleqII}
\big|
I_{\boldsymbol{\alpha},\boldsymbol{\beta},\boldsymbol{\gamma}}
\big|
\lesssim 
\|
W (\xi_1, \xi_2)^{-1} 
\sigma (x, \xi_1, \xi_2)
\|_{L^2_{ul} ((\R^n)^3) } 
\II_{\boldsymbol{\alpha},\boldsymbol{\beta},\boldsymbol{\gamma}}  
\end{equation}
with 
\begin{equation}\label{IIalphabetagamma}
\II_{\boldsymbol{\alpha},\boldsymbol{\beta},\boldsymbol{\gamma}}
=
\sum_{ \nu_0, \nu_1, \nu_2  \in \Z^{n}  }
W (\nu_1, \nu_2) 
A_{\boldsymbol{\alpha}} (\nu_1+\nu_2,\nu_0)
B_{\boldsymbol{\beta}} (\nu_0,\nu_1)
C_{\boldsymbol{\gamma}} (\nu_0,\nu_2). 
\end{equation}
We shall estimate 
$\II_{\boldsymbol{\alpha},\boldsymbol{\beta},\boldsymbol{\gamma}}$.

To the sum over $\nu_1, \nu_2$ in 
\eqref{IIalphabetagamma}, 
we apply the $\ell^2$ estimate 
assured by our assumption that  
$W$ restricted to $\Z^n \times \Z^n$ 
belongs to the class $\calB (\Z^n \times \Z^n)$  
to obtain  
\begin{equation*}
\II_{\boldsymbol{\alpha},\boldsymbol{\beta},\boldsymbol{\gamma}}
\lesssim 
\sum_{ \nu_0 \in \Z^{n} }
\| A_{\boldsymbol{\alpha}} (\nu_3,\nu_0) \|_{\ell^2_{\nu_3}}
\| B_{\boldsymbol{\beta}} (\nu_0,\nu_1) \|_{\ell^2_{\nu_1}}
\| C_{\boldsymbol{\gamma}} (\nu_0,\nu_2) \|_{\ell^2_{\nu_2}}. 
\end{equation*}
To estimate the sum over 
$\nu_0=(\nu_{0,1}, \dots, \nu_{0,n})\in \Z^n$, 
we use the H\"older inequality with 
the exponents $1=1/r'_j + 1/p_{1,j} + 1/p_{2,j}$.  
Thus we have 
\begin{equation}\label{IIHolder}
\begin{split}
&\II_{\boldsymbol{\alpha},
\boldsymbol{\beta},
\boldsymbol{\gamma}}
\\
&\lesssim
\| A_{\boldsymbol{\alpha}} (\nu_3,\nu_0) 
\|_{
\ell^2_{\nu_3} 
\ell^{r'_1}_{ \nu_{0, 1} }
\dots 
\ell^{r'_n}_{ \nu_{0, n} }
}
\| B_{\boldsymbol{\beta}} (\nu_0,\nu_1) 
\|_{
\ell^2_{\nu_1} 
\ell^{p_{1,1}}_{ \nu_{0, 1} } \dots 
\ell^{p_{1, n}}_{ \nu_{0, n} }
}
\| C_{\boldsymbol{\gamma}} (\nu_0,\nu_2) 
\|_{
\ell^2_{\nu_2} 
\ell^{p_{2,1}}_{ \nu_{0, 1} } \dots 
\ell^{p_{2,n}}_{ \nu_{0, n} }
}.
\end{split}
\end{equation}

The norm of $A_{\boldsymbol{\alpha}}$ in \eqref{IIHolder} 
is estimated by the use of the Parseval identity 
in $\ell^2_{\nu_3}$ as follows: 
\begin{align*}
&
\| A_{\boldsymbol{\alpha}} (\nu_3,\nu_0) 
\|_{
\ell^2_{\nu_3} 
\ell^{r'_1}_{ \nu_{0, 1} }
\dots 
\ell^{r'_n}_{ \nu_{0, n} }
}
\\
&=
\left\|\int_{Q} e^{ i \nu_3 \cdot x} 
\widetilde{\theta_0^{\alpha_1}}(x+\nu_0-y-\mu_0) 
g(x+\nu_0) x^{\alpha+\alpha_2}
\, dx
\right\|_{
L^2_y (Q) 
\ell^2_{\mu_0} 
\ell^2_{\nu_3} 
\ell^{r'_1}_{ \nu_{0, 1} }
\dots 
\ell^{r'_n}_{ \nu_{0, n} }
}
\\
&\approx
\big\|
\widetilde{\theta_0^{\alpha_1}}(x+\nu_0-y-\mu_0) 
g(x+\nu_0) x^{\alpha+\alpha_2}
\big\|_{
L^2_y (Q) 
\ell^2_{\mu_0} 
L^2_x (Q) 
\ell^{r'_1}_{ \nu_{0, 1} }
\dots 
\ell^{r'_n}_{ \nu_{0, n} }
}
\\
&\leq 
\big\|
\widetilde{\theta_0^{\alpha_1}}(x+\nu_0-y) 
g(x+\nu_0)
\big\|_{
L^2_y(\R^n) 
L^2_x(Q) 
\ell^{r'_1}_{ \nu_{0, 1} }
\dots 
\ell^{r'_n}_{ \nu_{0, n} }
}
\\
&= 
\| \widetilde{\theta_0^{\alpha_1}} \|_{L^2(\R^n)}
\| g \|_{
(L^2, 
\ell^{r'_1} 
\dots 
\ell^{r'_n})
}, 
\end{align*}
where the inequality 
$\le $ on the fourth line holds because 
$|x^{\alpha+\alpha_2}| \leq 1$ for $x \in Q$ and 
$\| F (y+\mu_0) \|_{L^2_y (Q) \ell^2_{\mu_0}} 
= \| F \|_{L^2(\R^n)}$. 
Recall that $\widetilde{\theta_0^{\alpha_1}} (y) $ is 
defined by 
\begin{equation}\label{theta0alpha1}
\begin{split}
&\widetilde{\theta_0^{\alpha_1}} (y) 
\\
&=R_{0,1}^{1 - \alpha_{1,1}}
\cdots 
R_{0,n}^{1 - \alpha_{1,n}}
\kappa (R_{0,1} y_1) 
\cdots 
\kappa (R_{0,n} y_n)  
(R_{0,1} y_1)^{\alpha_{1,1}}
\cdots 
(R_{0,n} y_n)^{\alpha_{1,n}}. 
\end{split}
\end{equation}
Thus, since a function of the form 
$\kappa (z) z^{\alpha}$ 
belongs to the Schwartz class 
$\calS (\R)$ 
and since $R_{0,j} \ge 1$, we have 
\begin{equation}\label{theta0}
\| \widetilde{\theta_0^{\alpha_1}} 
\|_{L^2(\R^n)}
\approx 
\prod_{j=1}^{n} 
R_{0, j}^{1/2-\alpha_{1, j }}
\le  
\prod_{j=1}^{n} 
R_{0, j}^{ 1/2 }. 
\end{equation}
Therefore 
\begin{equation}\label{estimateA}
\| A_{\boldsymbol{\alpha}} (\nu_3,\nu_0) 
\|_{
\ell^2_{\nu_3} 
\ell^{r'_1}_{ \nu_{0,1} }
\dots 
\ell^{r'_n}_{ \nu_{0,n} }
}
\lesssim 
\bigg(
\prod_{j=1}^{n} 
R_{0, j}^{ 1/2 }
\bigg)  
\| g 
\|_{
(L^2, 
\ell^{r'_1}
\dots 
\ell^{r'_n} 
) 
}.
\end{equation}

For the norm of $B_{\boldsymbol{\beta}}$ in 
\eqref{IIHolder}, 
we use \eqref{mixedMinkowski},  
the Hausdorff--Young inequality 
for $\ell^{p_{1,1}}_{\nu_{0,1}} $, 
and the inequality 
$\|\cdot \|_{\ell^2} \le \|\cdot \|_{ \ell^{p'_{1,1}} } $ 
to obtain 
\begin{align*}
&
\| B_{ \boldsymbol{\beta} } (\nu_0,\nu_1) 
\|_{
\ell^2_{ \nu_1 } 
\ell^{ p_{1,1} }_{ \nu_{0, 1} }
\dots 
\ell^{ p_{1,n} }_{ \nu_{0, n} }
}
\\
&=
\left\|
\int_{Q} 
e^{ i \nu_0 \cdot \xi_1 } 
\widetilde{ \theta_1^{\beta_1} }
( \xi_1+\nu_1-\eta_1-\mu_1 ) 
\widehat{f_1} ( \xi_1+\nu_1 ) 
\xi_1^{ \beta +\beta_2 }
\, d\xi_1 
\right\|
_{
L^2_{ \eta_1 } (Q) 
\ell^2_{ \mu_1 } 
\ell^2_{ \nu_1 } 
\ell^{ p_{1,1} }_{ \nu_{0,1} } \dots 
\ell^{ p_{1,n} }_{ \nu_{0,n} }
}
\\
&
\le 
\left\|\dots 
\right\|_{
\ell^{ p_{1,1} }_{ \nu_{0, 1} } 
L^2_{ \eta_1 } (Q) 
\ell^2_{ \mu_1 } 
\ell^2_{ \nu_1 } 
\ell^{ p_{1,2} }_{ \nu_{0, 2} }
\dots 
\ell^{ p_{1,n} }_{ \nu_{0, n} }
 }
\\
&
\lesssim 
\bigg\|
\int_{Q^{n-1}} 
e^{ i (\nu_{0,2}, \dots, \nu_{0,n})\cdot 
(\xi_{1,2}, \dots, \xi_{1,n}) }
\widetilde{ \theta_1^{\beta_1} }
( \xi_1+\nu_1-\eta_1-\mu_1 ) 
\\
&\qquad \qquad 
\widehat{f_1} ( \xi_1+\nu_1 ) 
\xi_1^{ \beta +\beta_2 }
\, d\xi_{1,2} \dots d\xi_{1,n}
\bigg\|
_{
L^{ p'_{1,1} }_{ \xi_{1,1} }(I)  
L^2_{ \eta_1 } (Q) 
\ell^2_{ \mu_1 } 
\ell^2_{ \nu_1 } 
\ell^{ p_{1,2} }_{ \nu_{0, 2} }
\dots 
\ell^{ p_{1,n} }_{ \nu_{0, n} }
}
\\
&
\le  
\big\|
\dots 
\big\|
_{
L^{ p'_{1,1} }_{ \xi_{1,1} }(I) 
\ell^{ p'_{1,1} }_{ \nu_{1,1} }
L^2_{ \eta_1 } (Q) 
\ell^2_{ \mu_1 } 
\ell^2_{ \nu_{1,2} } 
\dots 
\ell^2_{ \nu_{1,n} } 
\ell^{ p_{1,2} }_{ \nu_{0, 2} }
\dots 
\ell^{ p_{1,n} }_{ \nu_{0, n} }
},  
\end{align*}
where $I =[-1/2, 1/2)$. 
We then repeat the same arguments for 
$\ell^{p_{1,2}}_{\nu_{0,2}} , \dots, \ell^{p_{1,n}}_{\nu_{0,n}} $ 
in  this order to obtain
\begin{align*}
&
\|B_{ \boldsymbol{\beta} } (\nu_0,\nu_1)
\|
_{
\ell^2_{\nu_{1}} 
\ell^{p_{1,1}}_{\nu_{0,1}} 
\cdots 
\ell^{p_{1,n}}_{\nu_{0,n}} 
}
\\
&\lesssim
\big\| 
\widetilde{\theta_1^{\beta_1}}
( \xi_1+\nu_1-\eta_1-\mu_1 ) 
\widehat{f_1}( \xi_1+\nu_1 )
\big\|
_{ 
L^{ p'_{1,n} } _{ \xi_{1,n} } (I) 
\ell^{ p'_{1,n} }_{ \nu_{1,n} }
\dots 
L^{ p'_{1,1} } _{\xi_{1,1}} (I)  
\ell^{ p'_{1,1} }_{ \nu_{1,1} }
L^2 _{\eta_1} (Q) \ell^2_{\mu_1} 
}
\\
&
=
\big\| 
\widetilde{\theta_1^{\beta_1}}
( \xi_1-\eta_1 ) 
\widehat{f_1}( \xi_1 )
\big\|
_{ 
L^{ p'_{1,n} } _{ \xi_{1,n} } (\R) 
\dots 
L^{ p'_{1,1} } _{\xi_{1,1}} (\R)  
L^2 _{\eta_1} (\R^n)
}. 
\end{align*} 
Changing variables 
$\xi_{1, j} \to \xi_{1, j} + \eta_{1, j}$ for  
$j=1, \dots, n$, 
and using \eqref{mixedMinkowski}, 
we have 
\begin{align*}
&
\big\| 
\widetilde{\theta_1^{\beta_1}}
( \xi_1-\eta_1 ) 
\widehat{f_1}( \xi_1 )
\big\|
_{ 
L^{ p'_{1,n} } _{ \xi_{1,n} } (\R) 
\dots 
L^{ p'_{1,1} } _{\xi_{1,1}} (\R)  
L^2 _{\eta_1} (\R^n)
}
\\
&
=
\big\| 
\widetilde{\theta_1^{\beta_1}}
( \xi_1 ) 
\widehat{f_1}( \xi_1 + \eta_1 )
\big\|
_{ 
L^{ p'_{1,n} } _{ \xi_{1,n} } (\R) 
\dots 
L^{ p'_{1,1} } _{\xi_{1,1}} (\R)  
L^2 _{\eta_1} (\R^n)
}
\\
&\le 
\big\| 
\widetilde{\theta_1^{\beta_1}}
( \xi_1 ) 
\widehat{f_1}( \xi_1 + \eta_1 )
\big\|
_{ 
L^2 _{\eta_1} (\R^n)
L^{ p'_{1,n} } _{ \xi_{1,n} } (\R) 
\dots 
L^{ p'_{1,1} } _{\xi_{1,1}} (\R)  
}
\\
&=
\big\| 
\widetilde{\theta_1^{\beta_1}} ( \xi_1)
\big\|
_{ 
L^{ p'_{1,n} } _{ \xi_{1,n} } (\R) 
\dots 
L^{ p'_{1,1} } _{\xi_{1,1}} (\R)  
}
\|\widehat{f_1}\|_{L^2}. 
\end{align*}
For the mixed norm of 
$\widetilde{\theta_1^{\beta_1}} $ 
in the last expression, 
by the same reason as we deduced 
\eqref{theta0} from \eqref{theta0alpha1}, 
we have  
\[
\big\| 
\widetilde{\theta_1^{\beta_1}} ( \xi_1)
\big\|
_{ 
L^{ p'_{1,n} } _{ \xi_{1,n} } (\R) 
\dots 
L^{ p'_{1,1} } _{\xi_{1,1}} (\R)  
}
\lesssim 
\prod_{j=1}^{n} R_{1, j}^{ 1- 1/p'_{1,j} }
=
\prod_{j=1}^{n} R_{1, j}^{ 1/p_{1,j} }. 
\]
Also 
$\|\widehat{f_1}\|_{L^2}\approx 
\|f_1\|_{L^2}$ 
by Plancherel's theorem. 
Now combining the inequalities obtained above, 
we get 
\begin{equation}\label{estimateB}
\|B_{ \boldsymbol{\beta} }  (\nu_0,\nu_1)
\|
_{
\ell^2_{\nu_{1}} 
\ell^{p_{1,1}}_{\nu_{0,1}} 
\cdots 
\ell^{p_{1,n}}_{\nu_{0,n}} 
}
\lesssim 
\bigg(
\prod_{j=1}^{n} R_{1, j}^{ 1/p_{1,j} } 
\bigg)
\|f_1\|_{L^2}. 
\end{equation}

Similarly, we have
\begin{equation}\label{estimateC}
\|C_{ \boldsymbol{\gamma} }  (\nu_0,\nu_2)
\|
_{
\ell^2_{ \nu_{2} } 
\ell^{ p_{2,1} }_{ \nu_{0,1} } 
\cdots 
\ell^{ p_{2,n} }_{ \nu_{0,n} }
}
\lesssim 
\bigg( 
\prod_{j=1}^{n} 
R_{ 2, j }^{ 1/p_{ 2, j} } 
\bigg)
\| f_2 \|_{L^2}.
\end{equation}

The desired inequality \eqref{conclusionProp} 
now follows from 
\eqref{IleqII}, 
\eqref{IIHolder}, 
\eqref{estimateA}, 
\eqref{estimateB}, and 
\eqref{estimateC}. 
This completes the proof of Proposition 
\ref{main-prop}. 
\end{proof}

\subsection{A theorem for symbols with 
limited smoothness}
\label{subsectionMainTh}

From Proposition \ref{main-prop}, 
we shall deduce a theorem concerning 
bilinear pseudo-differential 
operators $T_{\sigma}$ with 
symbols of limited smoothness. 
To measure the smoothness of such symbols, 
we shall use Besov type norms.  
To define the Besov type norms, 
we use the partition of unity given as follows. 
Let $d\in \N$. 
Take a $\phi \in \calS (\R^d)$ 
such that $\phi (\xi)= 1$ for 
$ | \xi | \leq 1 $  
and 
$\supp \phi \subset 
\{ \xi \in \R^d : | \xi | \leq 2 \}$.  
We put $\psi (\xi)= \phi (\xi) - \phi (2\xi)$. 
Then 
$\supp \psi \subset 
\{ \xi \in \R^d : 1/2 \leq | \xi | \leq 2 \}$.
We set 
$\psi_0 = \phi $ and 
$\psi_k = \psi (\cdot / 2^k )$ for $k \in \N$.
Then $\sum_{k=0}^{\infty} \psi_{k} (\xi) = 1$ 
for all $\xi \in \R^d$. 
We shall call $\{\psi_k\}_{k\in \N_0}$ a  
Littlewood--Paley partition of unity on $\R^d$.  
It is easy to see that 
the Besov type norms given in the following definition 
do not depend, up to the equivalence of norms, 
on the choice of Littlewood--Paley partition of unity.

\begin{defn}\label{defBSWs} 
Let $W\in \calM (\R^{2n})$. 
Let $\{\psi_{k}\}_{ k\in \N_0 }$ 
be a Littlewood--Paley 
partition of unity on $\R$.  
For 
\begin{align*}
&
\boldsymbol{s}= 
(s_{0,1}, \dots, s_{0,n}, 
s_{1,1}, \dots, s_{1,n}, 
s_{2,1}, \dots, s_{2,n})
\in [0, \infty)^{3n}, 
\\
&
\boldsymbol{k}= 
(k_{0,1}, \dots, k_{0,n}, 
k_{1,1}, \dots, k_{1,n}, 
k_{2,1}, \dots, k_{2,n})
\in (\N_0)^{3n}, 
\end{align*}
and $\sigma = \sigma (x, \xi_1, \xi_2) 
\in L^{\infty} 
(\R^n_{x}\times 
\R^n_{\xi_1}\times 
\R^n_{\xi_2})$, 
we write 
$\boldsymbol{s}\cdot 
\boldsymbol{k}
=
\sum_{i=0}^{2}
\sum_{j=1}^{n}
s_{i,j} k_{i,j}$ and 
\begin{equation*}
\Delta_{\boldsymbol{k}} \sigma (x, \xi_1, \xi_2)
=
\bigg( \prod_{j=1}^{n} 
\psi_{k_{0, j} } ( D_{ x_j } ) 
\psi_{k_{1, j} } ( D_{ \xi_{1, j} } ) 
\psi_{k_{2, j} } ( D_{ \xi_{2,j} } ) 
\bigg) 
\sigma (x, \xi_1, \xi_2). 
\end{equation*}
We denote by 
$BS^{W}_{0,0} (\boldsymbol{s}; \R^n) $ 
the set of all $\sigma \in L^{\infty} ((\R^n)^3)$ for which 
the following norm is finite: 
\begin{equation*} 
\|\sigma\|_{ 
BS^{W}_{0,0} (\boldsymbol{s}; \R^n ) 
} 
= 
\sum_{\boldsymbol{k} \in (\N_0)^{3n} }\, 
2^{ 
\boldsymbol{s}\cdot 
\boldsymbol{k}
 }
\big\|
 W (\xi_1, \xi_2 )^{-1}
\Delta_{ \boldsymbol{k} } \sigma (x, \xi_1, \xi_2)
\big\|_{ L^2_{ul , \, x, \xi_1, \xi_2}(\R^{3n})}. 
\end{equation*}
\end{defn}

In terms of these notations, 
the theorem reads as follows.

\begin{thm}\label{main-thm} 
Let $W \in \calM (\R^{2n})$ and suppose 
the restriction of $W$ to $\Z^n \times \Z^n$ belongs 
to the class $\calB (\Z^n \times \Z^n)$. 
Let $r_j \in [1, 2]$, $j=1, \dots, n$. 
Then 
the bilinear 
pseudo-differential operator 
$T_{\sigma}$ is bounded from 
$L^2 (\R^n) \times L^2 (\R^n)$ 
to the amalgam space 
$(L^2, \ell^{r_1} \cdots \ell^{r_n})(\R^n)$ 
if 
$\sigma \in 
BS^{W}_{0,0} (\boldsymbol{s}; \R^n)$ 
with $\boldsymbol{s}=(s_{i,j})_{i,j}$ satisfying 
\begin{equation*}
s_{0,j} = 1/2, 
\quad 
s_{1,j}, s_{2,j }\ge 1/r_j -1/2, 
\quad 
s_{1,j} + s_{2,j} =1/r_j  
\end{equation*} 
for $j=1, \dots, n$. 
If in addition $r_1= \dots =r_n=r \in [1,2]$, 
then 
$T_{\sigma}$ is bounded 
from 
$L^2 (\R^n) \times L^2 (\R^n)$ 
to $L^r (\R^n)$ when $1<r\le 2$ or 
to $h^1 (\R^n)$ when $r=1$.  
\end{thm}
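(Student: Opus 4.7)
The plan is to decompose $\sigma$ via the Littlewood--Paley pieces $\Delta_{\boldsymbol{k}}\sigma$ implicit in the definition of the Besov-type norm, apply Proposition \ref{main-prop} to each piece with carefully chosen exponents, and sum over $\boldsymbol{k}$. For each $j=1,\dots,n$ and $i=1,2$, I would set $p_{i,j}\in[2,\infty]$ by $1/p_{i,j}=s_{i,j}$ (with the convention $p_{i,j}=\infty$ when $s_{i,j}=0$). The hypothesis $s_{1,j}, s_{2,j}\ge 1/r_j - 1/2$ together with $s_{1,j}+s_{2,j}=1/r_j$ forces $s_{i,j}\in[0,1/2]$ (since $r_j\in[1,2]$), so indeed $p_{i,j}\ge 2$ and $1/p_{1,j}+1/p_{2,j}=1/r_j$; these are precisely the exponents demanded by Proposition \ref{main-prop}.

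For each $\boldsymbol{k}\in(\N_0)^{3n}$, the Fourier support of $\Delta_{\boldsymbol{k}}\sigma$ is contained in $\prod_{i=0}^{2}\prod_{j=1}^{n}[-2^{k_{i,j}+1}, 2^{k_{i,j}+1}]$ thanks to the supports of the $\psi_{k_{i,j}}$. Applying Proposition \ref{main-prop} to $\Delta_{\boldsymbol{k}}\sigma$ with $R_{i,j}=2^{k_{i,j}+1}$ and the $p_{i,j}$ chosen above yields, for $g\in\calS$,
\[
\left|\int T_{\Delta_{\boldsymbol{k}}\sigma}(f_1,f_2)(x)\, g(x)\, dx\right|
\lesssim 2^{\boldsymbol{s}\cdot\boldsymbol{k}}
\big\|W^{-1}\Delta_{\boldsymbol{k}}\sigma\big\|_{L^2_{ul}}
\|f_1\|_{L^2}\|f_2\|_{L^2}
\|g\|_{(L^2,\ell^{r'_1}\cdots\ell^{r'_n})},
\]
since $s_{0,j}=1/2$ gives $R_{0,j}^{1/2}\approx 2^{k_{0,j}/2}$ and $R_{i,j}^{1/p_{i,j}}\approx 2^{k_{i,j}s_{i,j}}$ for $i=1,2$. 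Writing $\sigma=\sum_{\boldsymbol{k}}\Delta_{\boldsymbol{k}}\sigma$ and summing in $\boldsymbol{k}$, the right-hand side reconstitutes exactly $\|\sigma\|_{BS^{W}_{0,0}(\boldsymbol{s};\R^n)}$ times the remaining factors.

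By duality of amalgam spaces (iterating $(L^p,\ell^q)^{\ast}=(L^{p'},\ell^{q'})$ coordinate by coordinate, with appropriate density of $\calS$ in the predual when some $r_j=1$ forces $r'_j=\infty$), this gives $T_{\sigma}:L^2\times L^2\to (L^2,\ell^{r_1}\cdots\ell^{r_n})$. The final sentence of the theorem then follows from the embeddings $(L^2,\ell^r)\hookrightarrow L^r$ for $r\in(1,2]$ and $(L^2,\ell^1)\hookrightarrow h^1$ recorded in Subsection \ref{secamalgam}.

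The heavy analytic lifting is already contained in Proposition \ref{main-prop}; the remaining work here is mostly bookkeeping, namely verifying that the dyadic scales $R_{i,j}=2^{k_{i,j}+1}$ combined with the exponent identification $1/p_{i,j}=s_{i,j}$ reproduce exactly the Besov weight $2^{\boldsymbol{s}\cdot\boldsymbol{k}}$. The only mildly delicate point is the duality step when some $r_j=1$: rather than invoking $(L^2,\ell^\infty)^{\ast}$ directly, one must realize the $(L^2,\ell^{r_1}\cdots\ell^{r_n})$-norm of $T_\sigma(f_1,f_2)$ by testing against Schwartz $g$ and using density in the predual of each non-trivial factor.
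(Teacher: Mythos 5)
Your proposal is correct and takes essentially the same approach as the paper's own proof: Littlewood--Paley decomposition of $\sigma$, application of Proposition \ref{main-prop} to each piece with $R_{i,j}=2^{k_{i,j}+1}$ and $1/p_{i,j}=s_{i,j}$ (equivalently the paper picks $p_{i,j}$ first and sets $s_{i,j}=1/p_{i,j}$), then summation in $\boldsymbol{k}$ and duality of amalgam spaces, finishing with the embeddings $(L^2,\ell^r)\hookrightarrow L^r$ and $(L^2,\ell^1)\hookrightarrow h^1$. Your verification that $s_{i,j}\in[0,1/2]$ forces $p_{i,j}\ge 2$, and your care with the duality step when $r_j=1$ (so $r'_j=\infty$), are both valid.
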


\begin{proof}
The assertion concerning the boundedness 
to $L^r$ or to $h^1$ 
directly follows from 
the assertion for the amalgam 
space with the aid of the embeddings  
$(L^2 , \ell^r) \hookrightarrow L^r$ 
for $1<r\le 2$ and 
$(L^2 , \ell^1) \hookrightarrow h^1$.

The boundedness to 
the amalgam space 
 follows from Proposition \ref{main-prop}. 
We decompose the symbol $\sigma$ 
by using the Littlewood--Paley partition: 
\begin{equation*}
\sigma (x,\xi_1,\xi_2)
=
\sum_{ \boldsymbol{k} \in (\N_0)^{3n}} 
\Delta_{ \boldsymbol{k} } 
\sigma (x, \xi_1, \xi_2)
\end{equation*}
Then the 
support of 
$\calF (\Delta_{\boldsymbol{k}} \sigma )$ 
is included in 
$\prod_{i=0}^2 ( \prod_{j=1}^n [-R_{i,j},R_{i,j}] )$ 
with 
$R_{i,j}= 2^{ k_{i,j} +1}$. 
Take $p_{1,j}, p_{2,j}$ such that 
$1/r_j - 1/2 \le 1/p_{1,j}, 1/p_{2,j} \le 1/2$ and 
$1/p_{1,j} + 1/p_{2,j} =1/r_j$ for $j=1, \dots, n$.  
Then  
Proposition \ref{main-prop} 
and the 
duality between amalgam spaces 
yield 
\begin{align*}
&
\|T_{ \Delta_{\boldsymbol{k}} \sigma }
\|_{L^2 \times L^2 \to (L^2, \ell^{r_1} \dots \ell^{r_n})}
\\
&\lesssim 
\bigg(
\prod_{j=1}^{n}
(2^{ k_{0,j} })^{1/2}
(2^{ k_{1,j} })^{ 1/p_{1,j} }
(2^{ k_{2,j} })^{ 1/p_{2,j} }
\bigg) 
\big\| W (\xi_1, \xi_2)^{-1}
 \Delta_{\boldsymbol{k}} \sigma 
(x, \xi_1, \xi_2) 
\big\|_{ L^2_{ul} }. 
\end{align*} 
Taking sum over $\boldsymbol{k} \in (\N_0)^{3n}$, 
we obtain  
\begin{align*}
&\|T_{ \sigma }
\|_{L^2 \times L^2 \to (L^2, \ell^{r_1} \dots \ell^{r_n})}
\le 
\sum_{ \boldsymbol{k} \in (\N_0)^{3n} }
\|T_{ \Delta_{\boldsymbol{k}} \sigma }
\|_{L^2 \times L^2 \to (L^2, \ell^{r_1} \dots \ell^{r_n})}
\\
&
\lesssim 
\sum_{ \boldsymbol{k} \in (\N_0)^{3n} }
\bigg(
\prod_{j=1}^{n}
(2^{ k_{0,j} })^{1/2}
(2^{ k_{1,j} })^{ 1/p_{1,j} }
(2^{ k_{2,j} })^{ 1/p_{2,j} }
\bigg) 
\big\| W (\xi_1, \xi_2)^{-1} 
 \Delta_{\boldsymbol{k}} 
\sigma (x, \xi_1, \xi_2) 
\big\|_{ L^2_{ul} }
\\
&=
\|\sigma\|_{ BS^{W}_{0,0} ( \boldsymbol{s}; \R^n) } 
\end{align*}
with $s_{0,j}=1/2$, $s_{1,j}=1/p_{1,j}$, and 
$s_{2,j}=1/p_{2,j}$, 
which is the desired result. 
\end{proof}

\subsection{Another theorem for symbols with 
limited smoothness}
\label{subsectionvariant}

In this subsection, 
we give a variant of Theorem \ref{main-thm}. 
Here to measure the smoothness 
of symbols, we use different Besov type norms 
which are defined below. 
It is easy to see that 
these Besov type norms also 
do not depend, up to the equivalence of norms, 
on the choice of the 
Littlewood--Paley partition of unity 
involved in the definition.

\begin{defn}\label{defBSW2} 
Let $W\in \calM (\R^{2n})$. 
Let $\{\psi^{(n)}_{k}\}_{ k\in \N_0 }$ 
be a Littlewood--Paley 
partition of unity on $\R^n$ 
and write 
\begin{equation*}
\Delta^{\ast}_{\boldsymbol{k}} 
\sigma (x, \xi_1, \xi_2)
=
\psi^{(n)}_{k_0} ( D_x )
\psi^{(n)}_{k_1} ( D_{\xi_1} )
\psi^{(n)}_{k_2} ( D_{\xi_2} )
\sigma (x, \xi_1, \xi_2) 
\end{equation*}
for $\boldsymbol{k}=(k_0, k_1, k_2)\in (\N_0)^3$. 
For $s_0, s_1, s_2 \in [0, \infty)$, 
we denote by $BS^{W, \ast}_{0,0} (s_0, s_1, s_2; \R^n) $ 
the set of all $\sigma \in L^{\infty} ((\R^n)^3)$ for which 
the following norm is finite: 
\begin{align*}
&\|\sigma\|_{ BS^{W, \ast}_{0,0} (s_0, s_1, s_2; \R^n) } 
\\
&=
 \sum_{k_0, k_1, k_2 \in \N_0}\, 
 2^{ s_0 k_0 + s_1 k_1 + s_2 k_2 }
\big\|
W (\xi_1, \xi_2 )^{-1}
\Delta^{\ast}_{\boldsymbol{k}} 
\sigma (x, \xi_1, \xi_2)
\big\|_{ L^2_{ul} (\R^{3n}) }. 
\end{align*}
\end{defn}

The following theorem can be deduced from 
Proposition \ref{main-prop} just in the same way 
as in Proof of Theorem \ref{main-thm}. 
We omit the proof.

\begin{thm}\label{main-thm-2} 
Let 
$W \in \calM (\R^{2n})$ 
and suppose 
the restriction of $W$ to $\Z^n \times \Z^n$ belongs 
to the class $\calB (\Z^n \times \Z^n)$. 
Let $r \in [1,2]$. 
Then the bilinear 
pseudo-differential operator 
$T_{\sigma}$ is bounded from 
$L^2 (\R^n) \times L^2 (\R^n)$ 
to the amalgam space 
$(L^2, \ell^r)(\R^n)$ 
if 
$\sigma \in 
BS^{W, \ast}_{0,0} (s_0, s_1, s_2 ; 
\R^n)$ 
with  $s_0=n/2$, 
$s_1, s_2 \ge n/r -n/2$, and 
$s_1 + s_2 =n/r$. 
In particular, 
under the same assumptions, 
$T_{\sigma}$ is bounded 
from 
$L^2 (\R^n) \times L^2 (\R^n)$ 
to 
$L^r (\R^n)$ in the case $1<r\le 2$ 
or to $h^1 (\R^n)$ in the case 
$r=1$. 
\end{thm}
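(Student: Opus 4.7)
The plan is to mirror the proof of Theorem \ref{main-thm}, replacing the anisotropic (coordinatewise) Littlewood--Paley decomposition there by the isotropic decomposition $\sigma = \sum_{\boldsymbol{k}\in(\N_0)^3} \Delta^{\ast}_{\boldsymbol{k}}\sigma$ built into Definition \ref{defBSW2}. First I would observe that, with $\psi^{(n)}_{k}$ as in Definition \ref{defBSW2}, the Fourier transform of $\Delta^{\ast}_{\boldsymbol{k}}\sigma$ is supported in the product of three balls of radii $\sim 2^{k_0}, 2^{k_1}, 2^{k_2}$, and hence \emph{a fortiori} in the box $\prod_{i=0}^{2}\prod_{j=1}^{n}[-R_{i,j},R_{i,j}]$ with $R_{i,j} = 2^{k_i+1}$ for every $j=1,\dots,n$. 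This puts $\Delta^{\ast}_{\boldsymbol{k}}\sigma$ in the setting of Proposition~\ref{main-prop}.

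Next I would choose the exponents needed to apply Proposition~\ref{main-prop}. Given $r\in[1,2]$ and the hypotheses $s_1,s_2\ge n/r-n/2$ and $s_1+s_2=n/r$, set $1/p_1 = s_1/n$ and $1/p_2 = s_2/n$; then $p_1,p_2\in[2,\infty]$ with $1/p_1+1/p_2=1/r$. Take $r_j=r$ and $p_{i,j}=p_i$ uniformly in $j$ in Proposition~\ref{main-prop}. Since all $R_{i,j}$ for fixed $i$ coincide, the geometric factor collapses to
\[
\prod_{j=1}^{n} R_{0,j}^{1/2} R_{1,j}^{1/p_1} R_{2,j}^{1/p_2}
\;\lesssim\; 2^{(n/2)k_0}\, 2^{s_1 k_1}\, 2^{s_2 k_2}.
\]
Combined with the duality $(L^2,\ell^r)=((L^2,\ell^{r'}))^{\ast}$ on the test function $g$, Proposition~\ref{main-prop} yields
\[
\|T_{\Delta^{\ast}_{\boldsymbol{k}}\sigma}\|_{L^2\times L^2\to (L^2,\ell^r)}
\lesssim
2^{(n/2)k_0 + s_1 k_1 + s_2 k_2}\,
\bigl\|W(\xi_1,\xi_2)^{-1}\Delta^{\ast}_{\boldsymbol{k}}\sigma(x,\xi_1,\xi_2)\bigr\|_{L^2_{ul}(\R^{3n})}.
\]

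Finally, summing over $\boldsymbol{k}\in(\N_0)^3$ and using the triangle inequality for the operator norm, the right-hand side becomes exactly $\|\sigma\|_{BS^{W,\ast}_{0,0}(n/2,s_1,s_2;\R^n)}$, proving boundedness $L^2\times L^2\to (L^2,\ell^r)$. The embeddings $(L^2,\ell^r)\hookrightarrow L^r$ for $1<r\le 2$ and $(L^2,\ell^1)\hookrightarrow h^1$ recalled in Section~\ref{secamalgam} then give the remaining conclusions. There is no real obstacle: the only point requiring attention is checking that the endpoint conditions on $s_1,s_2$ translate precisely to the admissibility of $p_1,p_2\in[2,\infty]$ in Proposition~\ref{main-prop}, which they do because $s_i\le n/2 \Leftrightarrow p_i\ge 2$ and $s_i\ge n/r - n/2$ forces $p_i\le\infty$ under the constraint $s_1+s_2=n/r$.
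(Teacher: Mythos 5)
Your proof is correct and follows exactly the route the paper intends: the paper itself only remarks that Theorem \ref{main-thm-2} ``can be deduced from Proposition \ref{main-prop} just in the same way as in Proof of Theorem \ref{main-thm},'' and your argument — decomposing via $\Delta^{\ast}_{\boldsymbol{k}}$, noting $\supp\calF(\Delta^{\ast}_{\boldsymbol{k}}\sigma)$ fits in a box with $R_{i,j}=2^{k_i+1}$, choosing $p_i$ with $1/p_i=s_i/n$, and then applying Proposition \ref{main-prop}, duality, and the triangle inequality over $\boldsymbol{k}$ — is precisely that. The exponent bookkeeping ($s_i\ge n/r-n/2$ and $s_1+s_2=n/r$ giving $0\le s_i\le n/2$, hence $p_i\in[2,\infty]$) is carried out correctly.
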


\begin{rem}\label{remark-on-main-theorems}
We should compare 
Theorems \ref{main-thm} and \ref{main-thm-2}.  
In fact, the assertion of 
Theorem \ref{main-thm-2} for the case $1\le r<2$ 
is covered by Theorem \ref{main-thm}.  
To see this, we denote 
by 
$BS^{W}_{0,0}( (t_0)^n, (t_1)^n, (t_2)^n; \R^n )$ 
the class $BS^{W}_{0,0}( \boldsymbol{s}; \R^n )$ 
with $\boldsymbol{s}=(s_{i,j})$ given by 
\begin{equation*}
s_{0,j}=t_0, 
\;\; 
s_{1,j}=t_1, 
\;\; 
s_{2,j}=t_2, 
\;\; 
j=1, \dots, n.  
\end{equation*} 
With this special class of symbols, 
Theorem \ref{main-thm} for the case 
$r_1=\dots =r_n=r$ 
asserts that 
$T_{\sigma}$ is bounded from 
$L^2 (\R^n) \times L^2 (\R^n)$ to 
the amalgam space $(L^2, \ell^r)(\R^n)$ 
if $\sigma \in BS^{W}_{0,0}( (t_0)^n, (t_1)^n, (t_2)^n; \R^n )$ with 
$t_0=1/2$, $t_1, t_2 \ge 1/r-1/2$, and $1/t_1 + 1/t_2 =1/r$. 
This assertion is stronger than Theorem \ref{main-thm-2} 
in the case $1\le r<2$. 
This follows from the fact that the inclusion 
\begin{equation}\label{inclusion}
BS^{W, \ast}_{0,0} (n t_0, n t_1, n t_2; \R^n ) 
\hookrightarrow 
BS^{W}_{0,0}( (t_0)^n, (t_1)^n, (t_2)^n; \R^n ) 
\end{equation}
holds for $t_0, t_1, t_2 >0$. 
This inclusion, in a slightly different form,  
is already proved in \cite[Appendix A2 (i)]{boulkhemair 1995}. 
Here we give a brief proof for the reader's convenience. 
To prove 
\eqref{inclusion}, 
notice that 
\begin{equation}\label{DeltaDelta}
\Delta_{\boldsymbol{k}} \sigma (x, \xi_1, \xi_2)
=\sum_{ \boldsymbol{m} } 
\Delta_{\boldsymbol{k}} 
\Delta^{\ast}_{ \boldsymbol{m} } 
\sigma (x, \xi_1, \xi_2)   
\end{equation}
and that $\Delta_{\boldsymbol{k}} 
\Delta^{\ast}_{ \boldsymbol{m} } \neq 0$ only if 
\begin{equation}\label{restriction-mk}
\max \{k_{i,1}, \dots, k_{i, n}\} - c <m_i < \max 
\{k_{i,1}, \dots, k_{i, n}\} + c, 
\quad i=0,1,2,   
\end{equation}
where $c$ is a constant 
depending only on $n$. 
Using the property of $W \in \calM (\R^{2n})$ 
given in Proposition \ref{moderateproperties} (2), 
we see that the estimate 
\[
\|W(\xi_1, \xi_2)^{-1} 
\Delta_{\boldsymbol{k}} 
\tau (x, \xi_1, \xi_2)
\|_{L^2_{ul}}
\lesssim 
\|W(\xi_1, \xi_2)^{-1} 
\tau (x, \xi_1, \xi_2)
\|_{L^2_{ul}}  
\]
with an implicit constant 
independent of 
$\boldsymbol{k}$ 
holds for all bounded functions $\tau$ 
on $(\R^n)^3$.  
Thus from \eqref{DeltaDelta} we have 
\begin{equation}\label{DeltaDeltaL2ul}
\|W(\xi_1, \xi_2)^{-1} 
\Delta_{\boldsymbol{k}} \sigma (x, \xi_1, \xi_2)
\|_{L^2_{ul}} 
\lesssim 
\sum_{ 
\boldsymbol{m} : 
\eqref{restriction-mk} } 
\|W(\xi_1, \xi_2)^{-1} 
\Delta^{\ast}_{\boldsymbol{m}} 
\sigma (x, \xi_1, \xi_2)
\|_{L^2_{ul}}.  
\end{equation}
If $t_0, t_1, t_2>0$, then 
we have 
\[ 
\sum_{\boldsymbol{k}: \eqref{restriction-mk}} 
2^{
t_0 (k_{0,1} + \dots + k_{0,n}) 
+t_1 (k_{1,1} + \dots + k_{1,n}) 
+t_2 (k_{2,1} + \dots + k_{2,n}) 
}
\approx 
2^{n t_0 m_0 + nt_1 m_1 + nt_2 m_2}.   
\] 
Hence, from 
\eqref{DeltaDeltaL2ul},  
we obtain 
\[
\|\sigma\|_{ BS^{W}_{0,0} 
((t_0)^n, (t_1)^n, (t_2)^n; \R^n) } 
\lesssim 
\|\sigma\|_{ 
BS^{W, \ast}_{0,0} 
( n t_0, n t_1, n t_2; \R^n) }  
\]
as desired. 
\end{rem}

\subsection{Symbols with classical derivatives} 
\label{subsectionClassical}

In this subsection, we show 
that symbols that have classical derivatives 
up to certain order satisfy the conditions of 
Theorems \ref{main-thm} and \ref{main-thm-2}.

\begin{prop}
\label{classicalderivative} 
Let $\sigma = \sigma (x, \xi_1, \xi_2)$ 
be a bounded measurable 
function on $(\R^n)^3$ 
and $W \in \calM ( \R^{2n} )$. 
\begin{enumerate}
\setlength{\itemindent}{0pt} 
\setlength{\itemsep}{3pt} 
\item 
Let $\boldsymbol{s}=(s_{i,j})\in [0, \infty)^{3n}$. 
Suppose 
\[
 |\partial_{x_1}^{ \alpha_{0,1} }
\dots 
\partial_{x_n}^{ \alpha_{0,n} }
\partial_{ \xi_{1,1} }^{ \alpha_{1,1} }
\dots 
\partial_{ \xi_{1,n} }^{ \alpha_{1,n} }
\partial_{ \xi_{2,1} }^{ \alpha_{2,1} }
\dots 
\partial_{ \xi_{2,n} }^{ \alpha_{2,n} }
\sigma (x, \xi_1, \xi_2)
|
\le 
W (\xi_1, \xi_2) 
\]
for 
$\alpha_{i,j} \le [s_{i,j}]+1$. 
Then 
$\sigma \in 
BS^{W}_{0,0} ( \boldsymbol{s}; \R^n)$.  
\item 
Let $s_0, s_1, s_2\in [0, \infty)$. 
Suppose 
\[
 |
\partial_{x}^{ \alpha_{0} }
\partial_{ \xi_{1} }^{ \alpha_{1} }
\partial_{ \xi_{2} }^{ \alpha_{2} }
\sigma (x, \xi_1, \xi_2)
|
\le 
W (\xi_1, \xi_2) 
\]
for 
$\alpha_{i}\in (\N_0)^n$ 
with 
$|\alpha_{i}| \le [s_{i}]+1$. 
Then 
$\sigma \in 
BS^{W, \ast}_{0,0} ( s_0, s_1, s_2 ; \R^n)$. 
\end{enumerate}
To be precise, 
the above assumptions should be understood that 
the derivatives of $\sigma$ taken 
in the sense of distribution 
are functions in $L^{\infty}(\R^{3n})$ 
and they are bounded by $W(\xi_1, \xi_2)$ 
almost everywhere.  
\end{prop}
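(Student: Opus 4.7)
The plan is to exploit the standard \emph{derivative gain} of each Littlewood--Paley projection together with the moderate property of $W$. For $\sigma$ as in (1), observe that $\Delta_{\boldsymbol k}\sigma$ is a tensor product of the $3n$ one-dimensional projections $\psi_{k_{i,j}}(D)$ acting on the respective coordinate variables. When $k\geq 1$, $\psi_k$ is supported away from the origin, and for any $N\in\N$ one can write the one-dimensional operator as
\[
\psi_k(D) g = K^{(N)}_k\ast \partial^N g, \qquad K^{(N)}_k(y)=2^{k(1-N)}\check\phi_N(2^k y),
\]
with $\phi_N(\eta)=\psi(\eta)/(i\eta)^N$ smooth and compactly supported away from zero. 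A change of variables then gives $\int |K^{(N)}_k(y)|\langle y\rangle^M\,dy\lesssim_M 2^{-kN}$ for any $M>0$. For $k=0$ we take $N=0$ and use the fixed $L^1$ bound of the kernel of $\psi_0(D)$ with no gain.

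Choosing $N_{i,j}=[s_{i,j}]+1$ when $k_{i,j}\geq 1$ and $N_{i,j}=0$ otherwise, the commutativity of the one-dimensional convolutions yields
\[
\Delta_{\boldsymbol k}\sigma(x,\xi_1,\xi_2)=\int K_{\boldsymbol k}^{\boldsymbol N}(y,\eta_1,\eta_2)\,\partial^{\boldsymbol N}\sigma(x-y,\xi_1-\eta_1,\xi_2-\eta_2)\,dy\,d\eta_1\,d\eta_2,
\]
where $K_{\boldsymbol k}^{\boldsymbol N}$ is the tensor product of the one-dimensional kernels and $\partial^{\boldsymbol N}$ the corresponding product of one-dimensional derivatives of orders $N_{i,j}$. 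The hypothesis gives $|\partial^{\boldsymbol N}\sigma|\leq W(\xi_1,\xi_2)$, and Proposition \ref{moderateproperties}(2) furnishes $W(\xi_1-\eta_1,\xi_2-\eta_2)/W(\xi_1,\xi_2)\lesssim\langle(\eta_1,\eta_2)\rangle^{N_W/2}$. Combining these and applying the weighted $L^1$ estimate on each one-dimensional factor of $K_{\boldsymbol k}^{\boldsymbol N}$ with $M\geq N_W/2$ gives
\[
\big|W(\xi_1,\xi_2)^{-1}\Delta_{\boldsymbol k}\sigma(x,\xi_1,\xi_2)\big|\lesssim \prod_{i,j}2^{-k_{i,j}N_{i,j}}.
\]
Since $|Q|^3=1$, this $L^\infty$ bound dominates the $L^2_{ul}(\R^{3n})$ norm, and summing with the factor $2^{\boldsymbol s\cdot\boldsymbol k}$ gives a product of geometric series, convergent because $s_{i,j}<N_{i,j}$ in every factor.

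For (2) the same scheme applies with the $n$-dimensional projections $\psi^{(n)}_{k_i}(D)$ in place of the one-dimensional ones. The derivative representation now relies on the identity $1=\sum_{|\alpha|=N}\binom{N}{\alpha}\xi^{2\alpha}/|\xi|^{2N}$, valid for $\xi\neq 0$: for $k_i\geq 1$ it expresses $\psi^{(n)}_{k_i}(D)$ as a finite sum of convolutions with multi-index derivatives $\partial^\alpha$ of total order $|\alpha|=N_i:=[s_i]+1$, whose $n$-dimensional kernels again have weighted $L^1$ norms $\lesssim 2^{-k_iN_i}$; the remainder of the argument is unchanged. The main obstacle in both parts is the bookkeeping: one must separate the $x$-variables, over which $W$ is constant and so convolution passes transparently through $W^{-1}$, from the $\xi$-variables where the moderate bound is invoked, and verify that the tensor structure lets the weighted $L^1$ estimates factor cleanly.
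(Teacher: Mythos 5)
Your proof is correct, and it takes a genuinely different technical route from the paper's. The paper transfers derivatives onto $\sigma$ on the physical side: it invokes the vanishing-moment property $\int y^{\alpha}\check\psi(y)\,dy=0$ of $\check\psi$ for $k\ge1$, Taylor-expands $\sigma$ to order $N_i$ in each Littlewood--Paley variable, so that the polynomial part drops out and the remainder carries $\partial^{N_i}\sigma$ evaluated at a point shifted by $t_i\eta_i$ with $t_i\in[0,1]$; the moderate property of $W$ then absorbs the shift. You instead transfer derivatives on the Fourier side: you exploit that $\psi_k$ vanishes near the origin to write $\psi_k(D)=K^{(N)}_k\ast\partial^N$ with $\widehat{K^{(N)}_k}=\psi_k/(i\cdot)^N$ in one dimension, and the partition $1=\sum_{|\alpha|=N}\binom{N}{\alpha}\xi^{2\alpha}/|\xi|^{2N}$ in $n$ dimensions, producing a clean exact identity rather than a Taylor remainder. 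Both approaches yield the same kernel estimate $\lesssim2^{-kN}$ and invoke the moderate property of $W$ in the same place, so the analytic content is equivalent; yours avoids the Taylor remainder bookkeeping at the cost of introducing the inverse-power multipliers, while the paper's avoids division and so uses the same formula for both the one-dimensional and $n$-dimensional cases. One minor omission on your side: the hypothesis only gives distributional derivatives in $L^\infty$, so the identity $\psi_k(D)\sigma=K^{(N)}_k\ast\partial^N\sigma$ should be justified by first mollifying $\sigma$ and passing to the limit, as the paper notes at the outset of its proof.
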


\begin{proof} 
It is sufficient to treat 
$\sigma$ of class $C^{\infty}$. 
In fact, by using appropriate mollifier 
we can derive the result for general $\sigma$ 
from the result for 
$\sigma$ of class $C^{\infty}$. 
Since the claims (1) and (2) can be proved 
in almost the same way, 
here we shall give a proof of (2) and leave 
the proof of 
(1) to the reader.

Suppose $\sigma$ is $C^{\infty}$ and 
satisfies the assumption of (2). 
We write 
$N_i= [s_i]+1$ and $\psi = \psi^{(n)} $.

First consider 
$\Delta^{\ast}_{\boldsymbol{k}} \sigma $ for 
$k_0,k_1,k_2 \geq 1$. 
Recall that 
$\psi_{k}(\xi)= \psi (\xi/ 2^k)$ for $k\ge 1$ 
and $\psi \in C_{0}^{\infty} (\R^n)$ 
satisfies  
$\supp \psi \subset \{ 1/2 \le | \xi | \le 2\}$. 
The inverse Fourier transform 
$ \check\psi $ satisfies the 
moment condition 
$\int x^\alpha \check \psi (x)\, dx
= i^{|\alpha|} \partial^\alpha \psi (0)=0$. 
Thus, using the Taylor expansion with respect to 
the third variable of the symbol,
we have
\begin{align*}
&
\Delta^{\ast}_{\boldsymbol{k}} 
\sigma (x,\xi_1,\xi_2)
\\
&=
2^{n(k_0+k_1+k_2)} \int_{(\R^{n})^3} 
\check \psi (2^{k_0}y) 
\check \psi (2^{k_1}\eta_1) 
\check \psi (2^{k_2}\eta_2)
\\
&\quad\times
\Bigg\{ \sigma (x-y, \xi_1-\eta_1, \xi_2-\eta_2) 
- 
\sum_{|\alpha_2| < N_2} 
\frac{(-\eta_2)^{\alpha_2}}{\alpha_2!} 
\big( \partial_{\xi_2}^{\alpha_2}\sigma \big)
 (x-y,\xi_1-\eta_1,\xi_2)
\Bigg\} \, dY
\\
&=
2^{n(k_0+k_1+k_2)} \int_{(\R^{n})^3} 
\check \psi (2^{k_0}y) 
\check \psi (2^{k_1}\eta_1) 
\check \psi (2^{k_2}\eta_2)
\\
&\quad\times 
\sum_{|\alpha_2| = N_2} 
\frac{(-\eta_2)^{\alpha_2}}{\alpha_2!} 
\int_0^1 
N_2(1-t_2)^{N_2-1} 
\big( \partial_{\xi_2}^{\alpha_2}\sigma \big) 
(x-y,\xi_1-\eta_1,\xi_2-t_2 \eta_2) 
\, dt_2 dY,
\end{align*}
where $dY = dy d\eta_1 d\eta_2$.  
Repeating the same argument to the variables 
$\eta_1$ and $y$, we obtain
\begin{align}
\label{taylorofsymbol}
\begin{split}
&
\Delta^{\ast}_{\boldsymbol{k}}
\sigma (x,\xi_1,\xi_2)
\\
&=
2^{n(k_0+k_1+k_2)} 
\sum_{|\alpha_0| = N_0} \frac{1}{\alpha_0 !} 
\sum_{|\alpha_1| = N_1} \frac{1}{\alpha_1 !} 
\sum_{|\alpha_2| = N_2} \frac{1}{\alpha_2 !} 
\\
&\quad\times
\int_{(\R^{n})^3}
\check \psi (2^{k_0}y) (-y)^{ \alpha_0 } 
\check \psi (2^{k_1}\eta_1) (-\eta_1)^{\alpha_1} 
\check \psi (2^{k_2}\eta_2) (-\eta_2)^{\alpha_2} 
\\
&\quad\times
\int_{[0,1]^3} 
\bigg( \prod_{i=0}^2 
N_i (1-t_i)^{N_i-1} \bigg)
\big( \partial_{x}^{\alpha_0 } 
\partial_{\xi_1}^{\alpha_1} 
\partial_{\xi_2}^{\alpha_2} 
\sigma \big)
(x-t_0y, \xi_1-t_1 \eta_1, \xi_2-t_2 \eta_2 ) 
\, dTdY,
\end{split}
\end{align}
where $dT = dt_0 dt_1 dt_2$. 
If $\sigma$ satisfies the assumption of (2), then 
for $\alpha_0, \alpha_1, \alpha_2$ 
with $|\alpha_i|=N_i$ 
we have 
\begin{align*}
\left|
\big( \partial_{x}^{\alpha_0 } 
\partial_{\xi_1}^{\alpha_1} 
\partial_{\xi_2}^{\alpha_2} 
\sigma \big)
(x-t_0y, \xi_1-t_1 \eta_1, \xi_2-t_2 \eta_2 ) 
\right|
&\le 
W (\xi_1-t_1 \eta_1, \xi_2-t_2 \eta_2) 
\\
&\lesssim
W (\xi_1, \xi_2) 
\langle \eta_1 \rangle^{L}
\langle \eta_2 \rangle^{L},
\end{align*}
where the latter inequality 
follows from the assumption  
$W \in \calM (\R^{2n})$ 
and 
$L$ is a constant depending 
on $W$ 
(see Proposition \ref{moderateproperties} (2)). 
Hence 
\begin{align*}
&
| \Delta^{\ast}_{\boldsymbol{k}}
\sigma (x,\xi_1,\xi_2) |
\\
&\lesssim
2^{n(k_0+k_1+k_2)} 
W (\xi_1, \xi_2 ) 
\\
&\quad\times 
\int_{(\R^{n})^3}
\left| \check \psi (2^{k_0} y) \right| \, 
|y|^{N_0} 
\left| \check \psi (2^{k_1} \eta_1) \right| \, 
|\eta_1|^{N_1} 
\langle \eta_1 \rangle^{L}
\left| \check \psi (2^{k_2} \eta_2) \right| \, 
|\eta_2|^{N_2} 
\langle \eta_2 \rangle^{L}
\, dY
\\
&\lesssim 
\,2^{-k_0 N_0} 
\,2^{-k_1 N_1} 
\,2^{-k_2 N_2} 
W (\xi_1 , \xi_2 ) 
\end{align*}
for all $x,\xi_1,\xi_2 \in \R^n$ and 
all $k_0,k_1,k_2 \geq 1$.

If one of $k_i$ is zero, then 
by avoiding usage of the moment condition 
and the Taylor expansion for the corresponding variables,
we also obtain the same conclusion as above.

Thus we have 
\begin{equation*}
\| 
W (\xi_1, \xi_2 )^{-1}
\Delta^{\ast}_{\boldsymbol{k}}\sigma (x,\xi_1,\xi_2) 
\|_{L^{2}_{ul}}
\lesssim
\,2^{-k_0 N_0 } 
\,2^{-k_1 N_1 } 
\,2^{-k_2 N_2 }
\end{equation*}
for all $k_0,k_1,k_2 \in\N_0$. 
Since $N_i = [s_i]+1 > s_i$, 
the above inequalities imply  
\[
\sum_{k_0, k_1, k_2 \in \N_0}
2^{s_0 k_0 + s_1 k_1 + s_2 k_2} 
\| W (\xi_1, \xi_2)^{-1} 
\Delta^{\ast}_{\boldsymbol{k}} 
\sigma (x,\xi_1,\xi_2) 
\|_{L^{2}_{ul}} 
\lesssim 1.
\] 
This completes the proof.
\end{proof}

\subsection{Proof of Theorem \ref{main-thm-1}}
\label{subsection-Proof-Main-1}

Here we give a proof of
Theorem \ref{main-thm-1}.

\begin{proof} 
We prove the assertion (2) first. 
Suppose $V \in \calB (\Z^n \times \Z^n)$ 
and $\sigma \in 
BS^{ \widetilde{V} }_{ 0,0 } (\R^n)$. 
We take a function $V^{\ast}$ as mentioned in 
Proposition \ref{Vast}. 
By Proposition \ref{moderateproperties} (2), 
it follows that $\widetilde{V} \lesssim V^{\ast}$ 
and hence $\sigma \in BS^{ V^{\ast} }_{ 0,0 } (\R^n)$. 
Proposition \ref{classicalderivative} 
implies that 
$\sigma$ also satisfies  
the assumptions of Theorems \ref{main-thm} 
and \ref{main-thm-2} 
with $W = V^{\ast}$ and $r_1= \dots =r_n=r=1$, 
and the boundedness of $T_{\sigma}$ 
follows.

Next, we shall prove the assertion (1). 
The basic idea of this part of proof 
goes back to 
\cite[Proof of Lemma 6.3]{MT-2013},

Let $V$ be a nonnegative bounded function on 
$\Z^n \times \Z^n$ and $0<r<\infty$. 
We assume 
$\mathrm{Op} (BS^{\widetilde{V}}_{0,0} ) 
\subset 
B (L^2 \times L^2 \to L^r )$ 
with 
$\widetilde{V}$ defined as in 
Theorem \ref{main-thm-1}. 
By the closed graph theorem, 
it follows that 
there exist a positive integer $M$
and a positive constant $C$ such that
\begin{equation}\label{inequality001}
\|T_{\sigma}\|_{L^2 \times L^2 \to L^r}
\le C\max_{|\alpha|, |\beta_1|, |\beta_2| \le M}
\left\| \widetilde{V}(\xi_1, \xi_2)^{-1} 
\partial^{\alpha}_x\partial^{\beta_1}_{\xi_1}
\partial^{\beta_2}_{\xi_2}\sigma(x,\xi_1,\xi_2)\right\|_{L^{\infty}}
\end{equation}
for all bounded smooth functions $\sigma$ on $(\R^n)^3$ 
(see \cite[Lemma 2.6]{BBMNT}). 
Our purpose is to prove the inequality 
\eqref{BL222}. 
For this,  
it is sufficient to consider 
$A, B, C \in \ell^2 (\Z^n)$ such that 
$A(\mu)=B(\mu) = C(\mu)=0$ except for 
a finite number of $\mu \in \Z^n$.

Take $\varphi,\widetilde{\varphi} \in \calS(\R^n)$ 
such that 
\begin{align}
&\mathrm{supp}\, \widetilde{\varphi} 
\subset [-1/2,1/2]^n, 
\quad
\widetilde{\varphi}=1 \ \text{on $[-1/4,1/4]^n$},  
\quad 
\mathrm{supp}\, \varphi \subset [-1/4,1/4]^n, 
\nonumber
\\
&
|\calF ^{-1} \varphi | \ge 1 \;\; 
\text{on $[-\pi,\pi ]^n$}. 
\label{varphi-live}
\end{align}
Take a sequence of real numbers 
$\{\epsilon_k\}_{k \in \Z^n}$
such that $\sup_{k \in \Z^n} |\epsilon_k| \le 1$, 
and set
\begin{equation*}
\sigma (\xi_1,\xi_2)
=\sum_{ k_1,k_2 \in \Z^n }
\epsilon_{ k_1+k_2 } V(k_1, k_2) 
\widetilde{\varphi}(\xi_1-k_1)
\widetilde{\varphi}(\xi_2-k_2). 
\end{equation*}
Then we have 
\begin{equation}\label{estimate-sigma}
|\partial^{\beta_1}_{\xi_1}\partial^{\beta_2}_{\xi_2}
\sigma (\xi_1,\xi_2)|
\le C_{\beta_1,\beta_2} 
\widetilde{V} (\xi_1, \xi_2) 
\end{equation}
with $C_{\beta_1,\beta_2}$ independent of 
the sequence $\{\epsilon_k\}$. 
We define $f_1, f_2 \in \calS (\R^n)$ by 
\begin{align*}
&
\widehat{f_{1}}(\xi_1 )
=\sum_{\nu_1 \in \Z^n}
B(\nu_1)
\varphi(\xi_1 -\nu_1), 
\\
&
\widehat{f_{2}}(\xi_2 )
=\sum_{\nu_2 \in \Z^n}
C(\nu_2)
\varphi(\xi_2 -\nu_2). 
\end{align*}
Then 
$
f_{1} (x)
=
\sum_{\nu_1 \in \Z^n}
B(\nu_1)
e^{i \nu_1 \cdot x} 
\calF^{-1}\varphi (x)$ 
and hence, 
using Parseval's identity and 
\eqref{varphi-live}, 
we have 
$\|f_1\|_{L^2} \approx \|B\|_{\ell^2}$. 
Similarly 
$\|f_2\|_{L^2} \approx \|C\|_{\ell^2}$. 
From the situation of 
the supports of 
$\varphi$ and 
$\widetilde{\varphi}$, 
we have 
\begin{align*}
T_{\sigma}(f_1,f_2)(x)
&=
\sum_{ \nu_1, \nu_2 \in \Z^n }
\epsilon_{\nu_1+\nu_2} 
V(\nu_1, \nu_2) 
B(\nu_1) C(\nu_2) 
e^{i (\nu_1 + \nu_2)\cdot x} 
\calF^{-1} \varphi (x)^2
\\
&
=\sum_{k}
\epsilon_k d_k e^{i k \cdot x} 
\calF^{-1} \varphi (x)^2,
\end{align*}
where
\begin{equation}\label{def-dk}
d_k = 
\sum_{\nu_1 + \nu_2 =k} 
V(\nu_1, \nu_2) 
B(\nu_1) C(\nu_2). 
\end{equation}
Notice that 
$d_k\neq 0$ only for a finite number of $k$'s  
by virtue of our assumptions on $B$ and $C$.

Now from \eqref{inequality001}, \eqref{estimate-sigma}, 
and from the estimates 
of the $L^2$ norms of $f_1$ and $f_2$ mentioned 
above, we have 
\begin{equation*}
\|T_{\sigma}(f_1,f_2)\|_{L^r}
\lesssim \|f_1\|_{L^2}\|f_2\|_{L^2}
\approx \|B\|_{\ell^2} 
\|C\|_{\ell^2}.
\end{equation*}
By \eqref{varphi-live}, we have
\[
\|T_{\sigma} (f_1, f_2)\|_{L^r}^r
\ge 
\int_{[-\pi, \pi]^n}
\bigg| \sum_{ k }
\epsilon_k 
d_k e^{i k \cdot x} \bigg|^r dx. 
\]
Hence 
\begin{equation}\label{inequality002}
\int_{[-\pi, \pi]^n}
\bigg| \sum_{ k }
\epsilon_k 
d_k e^{i k \cdot x} \bigg|^r dx
\lesssim \left(
\|B\|_{\ell^2} 
\|C\|_{\ell^2}
\right)^r.
\end{equation}
It should be noticed that
the implicit constant in 
\eqref{inequality002} does  
not depend on $\{\epsilon_k\}$.

We choose 
$\epsilon_k = \epsilon_k (\omega)$  
to be identically distributed 
independent random variables on a probability 
space, each of which  
takes $+1$ and $-1$ with 
probability $1/2$. 
Then integrating 
over $\omega$ and using 
Khintchine's inequality, 
we have 
\begin{equation}\label{Khintchine}
\int 
\Big(\text{the left hand side of 
\eqref{inequality002}} 
\Big)\, dP (\omega)
\approx 
\bigg(\sum_{k}
|d_k|^2 \bigg)^{r/2} 
\end{equation}
(for Khintchine's inequality, 
see, e.g., \cite[Appendix C]{grafakos 2014c}).

Combining 
\eqref{def-dk}, \eqref{inequality002}, and 
\eqref{Khintchine}, 
we obtain 
\[
\bigg\|
\sum_{\nu_1 + \nu_2 =k} 
V(\nu_1, \nu_2) 
B(\nu_1) C(\nu_2)
\bigg\|_{\ell^2_k}
\lesssim 
\|B\|_{\ell^2}
\|C\|_{\ell^2}, 
\]
which is equivalent to \eqref{BL222}. 
This completes the proof of 
Theorem \ref{main-thm-1}. 
\end{proof}

\subsection{A theorem of 
Grafakos--He--Slav\'ikov\'a with 
some generalization}
\label{subsectionGHS} 

The theorem given below 
is a generalization of the theorem of 
Grafakos--He--Slav\'ikov\'a \cite{GHS}. 
We shall prove this theorem by using 
Theorem \ref{main-thm-1}.

\begin{thm}\label{GHSq<4}
Suppose $\sigma \in BS^{0}_{0,0}(\R^n)$ 
with the notation of \eqref{defBSm00} 
and suppose the function 
$V(\xi_1, \xi_2)=
\sup_{x\in \R^n}|\sigma (x, \xi_1, \xi_2)|$ 
belongs to $L^{q}_{\xi_1, \xi_2} (\R^{2n})$
for some $0<q <4$. 
Then the bilinear pseudo-differential operator 
$T_{\sigma}$ is bounded from $L^2 \times L^2 $ 
to the amalgam space $(L^2, \ell^1)$. 
In particular, $T_{\sigma}$ is 
bounded from $L^2 \times L^2 $ to 
$h^{1} \cap L^2 $. 
\end{thm}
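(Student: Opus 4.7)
The plan is to apply Theorem~\ref{main-thm-1}(2), via its Besov-type refinement in Theorem~\ref{main-thm-2}, by constructing a discretization $V_{0}$ of the envelope $V(\xi_{1},\xi_{2}) = \sup_{x}|\sigma(x,\xi_{1},\xi_{2})|$. Since $\sigma\in BS^{0}_{0,0}$ forces $\|\sigma\|_{L^{\infty}}<\infty$ and the $\xi$-derivatives of $\sigma$ are bounded, $V$ is Lipschitz continuous and lies in $L^{\infty}\cap L^{q}$, hence in $L^{p}$ for every $p\in[q,\infty]$. Fix any $p\in[q,4)$ and set
\[
V_{0}(\nu_{1},\nu_{2}) := \sup\bigl\{V(\xi_{1},\xi_{2}) : (\xi_{1},\xi_{2})\in(\nu_{1},\nu_{2})+[-1,1]^{2n}\bigr\}.
\]
A routine discretization argument (exploiting the Lipschitz continuity and boundedness of $V$) shows $V_{0}\in\ell^{p,\infty}(\Z^{2n})\subset\ell^{4,\infty}(\Z^{2n})$, and Proposition~\ref{weightL4weak} then yields $V_{0}\in\calB(\Z^{n}\times\Z^{n})$. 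Proposition~\ref{Vast} produces a moderate majorant $W=V_{0}^{\ast}\in\calM(\R^{2n})$ whose restriction to $\Z^{2n}$ still lies in $\calB$ and which satisfies $V\le\widetilde V_{0}\le W$ everywhere.

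With this $W$ the bound $|\sigma|\le W$ is immediate, but requiring the full estimate $|\partial^{\alpha}_{x}\partial^{\beta_{1}}_{\xi_{1}}\partial^{\beta_{2}}_{\xi_{2}}\sigma|\lesssim W$ needed by $BS^{W}_{0,0}$ is unrealistic: the hypothesis $\sigma\in BS^{0}_{0,0}$ only gives uniformly bounded derivatives, whereas $W$ decays at infinity. To bypass this I would invoke the Besov-type Theorem~\ref{main-thm-2} with $r=1$ and $s_{0}=s_{1}=s_{2}=n/2$, whose hypothesis only asks for summability of $\|W^{-1}\Delta^{\ast}_{\boldsymbol{k}}\sigma\|_{L^{2}_{ul}}$ weighted by $2^{n(k_{0}+k_{1}+k_{2})/2}$; this involves only the Littlewood--Paley pieces, not pointwise derivative estimates on $\sigma$.

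The main obstacle is thus proving sufficient decay of $\|W^{-1}\Delta^{\ast}_{\boldsymbol{k}}\sigma\|_{L^{2}_{ul}}$ in $\boldsymbol{k}$. I would combine two complementary bounds: on one hand, the Taylor expansion argument in the proof of Proposition~\ref{classicalderivative}, applied to the bounded derivatives of $\sigma\in BS^{0}_{0,0}$, gives $\|\Delta^{\ast}_{\boldsymbol{k}}\sigma\|_{L^{\infty}}\lesssim C_{N}\,2^{-N(k_{0}+k_{1}+k_{2})}$ for every $N$; on the other, writing $\Delta^{\ast}_{\boldsymbol{k}}\sigma$ as a convolution with the Schwartz kernel $\check\psi^{(n)}_{k_0}\otimes\check\psi^{(n)}_{k_1}\otimes\check\psi^{(n)}_{k_2}$ and using $|\sigma|\le V$ yields $|\Delta^{\ast}_{\boldsymbol{k}}\sigma(x,\xi_{1},\xi_{2})|\lesssim V\ast\Phi_{k_{1},k_{2}}(\xi_{1},\xi_{2})$ for an $L^{1}$-normalised Schwartz mollifier $\Phi_{k_{1},k_{2}}$ at scales $(2^{-k_{1}},2^{-k_{2}})$. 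Comparing the second bound with $W$ on each unit cube (using the moderateness of $W$) gives $\|W^{-1}\Delta^{\ast}_{\boldsymbol{k}}\sigma\|_{L^{2}_{ul}}\lesssim 1$, and interpolating this with the first bound produces decay $\lesssim 2^{-\delta(k_{0}+k_{1}+k_{2})}$ with $\delta>n/2$, which is exactly what makes the Besov sum in Theorem~\ref{main-thm-2} converge. Theorem~\ref{main-thm-2} then delivers $T_{\sigma}\colon L^{2}\times L^{2}\to(L^{2},\ell^{1})$, and the embedding $(L^{2},\ell^{1})\hookrightarrow h^{1}\cap L^{2}$ recorded in Subsection~\ref{secamalgam} completes the proof. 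The delicate step is precisely this interpolation between the bounded-derivative decay and the $V$-weighted amplitude control, since $W$ has no positive lower bound.
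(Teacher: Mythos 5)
Your route---verifying the Besov-type hypothesis of Theorem~\ref{main-thm-2} with a single fixed weight built from $V$---is genuinely different from the paper's. The paper instead decomposes $\sigma=\sum_k\sigma_k$ via a Littlewood--Paley partition of $\R^{3n}$, applies the parabolic rescaling $\widetilde\sigma_k(x,\xi_1,\xi_2)=\sigma_k(2^kx,2^{-k}\xi_1,2^{-k}\xi_2)$, and invokes Theorem~\ref{main-thm-1} at each scale $k$ with a scale-dependent weight $W_k=\min\{2^{-kL},\,V_k(2^{-k}\cdot)\}$, where $V_k=V*\Phi_k$ with $\Phi_k$ an $L^1$-normalized bump at scale $2^{-k}$. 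The decay in $k$ needed to sum the operator norms comes from $\|W_k\|_{L^4}\lesssim(2^{-kL})^\theta(2^{2kn/q})^{1-\theta}\|V\|_{L^q}^{1-\theta}$ with $1-\theta=q/4$ and $L$ arbitrarily large.

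As written, your argument has two genuine gaps. First, the discretization by a supremum fails in general. With $\sigma\in BS^0_{0,0}$ you only know that $V$ is bounded and Lipschitz, so a bump of height $h$ has width comparable to $h$ and hence $L^q$-mass $\approx h^{q+2n}$, while still contributing $h$ to $V_0(\nu)=\sup_{\nu+[-1,1]^{2n}}V$. Taking disjoint such bumps of heights $h_j=j^{-a}$ at distant lattice points with $1/(q+2n)<a<1/4$ gives $V\in L^q\cap L^\infty$, Lipschitz, yet $V_0\notin\ell^{4,\infty}(\Z^{2n})$; this is possible whenever $q+2n>4$ (so always for $n\ge2$). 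The paper's $V_k$ is an $L^1$-average, so Young preserves the $L^q$ norm and there is no dimensional loss. Second, and more fundamental, the closing interpolation cannot produce decay with your choice of $W$. Your two estimates --- $|\Delta^{\ast}_{\boldsymbol k}\sigma|\lesssim_N 2^{-NK}$ (write $K=k_0+k_1+k_2$) from the bounded derivatives, and $|\Delta^{\ast}_{\boldsymbol k}\sigma|\lesssim V*\Phi\approx W$ from $|\sigma|\le V$ --- give $W^{-1}|\Delta^{\ast}_{\boldsymbol k}\sigma|\lesssim W^{-1}\min\{2^{-NK},W\}$, but on unit cubes far out where $W\le 2^{-NK}$ (such cubes always exist since $W\to0$ at infinity) this ratio is $\approx1$, so $\|W^{-1}\Delta^{\ast}_{\boldsymbol k}\sigma\|_{L^2_{ul}}\gtrsim1$ with no decay in $\boldsymbol{k}$, for every $N$. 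A patch entirely inside your framework is to take a strictly sub-unit power as the weight: $W=(V^{\ast})^{\alpha}$ with $\alpha\in(q/4,1)$ and $V^{\ast}$ a moderate average-based envelope of $V$ normalized so that $V^{\ast}\le1$. Then $V\le W$, $W|_{\Z^{2n}}\in\ell^{q/\alpha,\infty}\subset\ell^{4,\infty}$, and the elementary inequality $\min\{2^{-NK},V^{\ast}\}\le(2^{-NK})^{1-\alpha}(V^{\ast})^{\alpha}$ yields $\|W^{-1}\Delta^{\ast}_{\boldsymbol k}\sigma\|_{L^2_{ul}}\lesssim2^{-N(1-\alpha)K}$, which beats $2^{-(n/2)K}$ once $N>n/\bigl(2(1-\alpha)\bigr)$. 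The paper achieves the same effect geometrically by building the damping factor $2^{-kL}$ directly into the scale-dependent weight $W_k$.
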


\begin{proof} 
We assume $V \in L^q (\R^{2n})$ with $1\le q<4$. 
The assumption $q\ge 1$ gives no additional restriction 
since $\sigma$ already belongs to $L^{\infty}$ by 
the assumption $\sigma \in BS^{0}_{0,0}(\R^n)$. 
In the following argument, $N$ denotes a fixed 
sufficiently large positive number that 
depends only on the dimension $n$.

We take a Littlewood-Paley partition 
of unity $\{\psi_k\}$ on $\R^{3n}$ and 
decompose $\sigma$ as  
\[
\sigma (x, \xi_1, \xi_2) 
= 
\sum_{k=0}^{\infty} 
\psi_{k} ( D_{x, \xi_1, \xi_2} ) \sigma (x, \xi_1, \xi_2)
=
\sum_{k=0}^{\infty} 
\sigma_{k} (x, \xi_1, \xi_2). 
\]
In order to show  
$T_{\sigma}: L^2 \times L^2 \to (L^2, \ell^1)$, 
we shall prove 
\begin{equation}\label{absolutesum}
\sum_{k=0}^{\infty} 
\|T_{\sigma_k}\|_{L^2 \times L^2 \to (L^2, \ell^1)} 
< \infty. 
\end{equation}

We define $V_k$ by 
\begin{align*}
V_{k}(\xi_1, \xi_2) 
= \int_{\R^{2n}} 
V(\eta_1, \eta_2 ) 
2^{ 2 k n } (1 + 2^k |\xi_1 - \eta_1| + 2^k |\xi_2 - \eta_2|) ^{-N}\, 
d\eta_1 d\eta_2. 
\end{align*}
We shall derive estimates of $\sigma_k$ in 
terms of $V_k$.

Firstly, 
\begin{equation}\label{dasigmakVk}
\big|
\partial_{x, \xi_1, \xi_2}^{\alpha} 
\sigma_{k} (x, \xi_1, \xi_2)
\big| 
\le C_{\alpha} 2^{k |\alpha|} 
V_{k} (\xi_1, \xi_2). 
\end{equation}
To see this, consider first the case $k\ge 1$. 
Then recall that 
the function $\psi_{k}$ is of the form 
$\psi_{k}=\psi (2^{-k} \cdot )$ 
with $\psi \in \calS (\R^{3n})$. 
Hence 
the derivative on the left hand side can be written as 
\begin{align*}
&
\partial_{x, \xi_1, \xi_2}^{\alpha} 
\sigma_{k} (x, \xi_1, \xi_2)
=\big(
(\partial^{\alpha} \calF^{-1}\psi_k ) \ast 
\sigma \big)
(x, \xi_1, \xi_2)
\\
&
= 
\int_{\R^{3n}} 
2^{3 k n}\,  2^{ k |\alpha|} 
(\partial^{\alpha} \calF^{-1}\psi ) 
(2^k (x-y, \xi_1-\eta_1, \xi_2 - \eta_2) ) 
\sigma (y, \eta_1, \eta_2)\, dy d\eta_1 d\eta_2.  
\end{align*}
Since $\psi \in \calS$ and since 
$\sigma$ is bounded by $V$, 
the integrand on the right hand side is 
bounded by 
\[
C_{\alpha} 
2^{3 k n}\,  2^{ k |\alpha|} 
(1+ 2^k |x-y| )^{-N} 
(1+ 2^k |\xi_1 -\eta_1| + 2^k |\xi_2 - \eta_2| ) ^{-N} 
V (\eta_1, \eta_2)
\]
and thus the estimate \eqref{dasigmakVk} follows. 
Proof for $k=0$ is similar.

Secondly, 
\begin{equation}\label{dasigmakLinfty}
\big|
\partial_{x, \xi_1, \xi_2}^{\alpha} 
\sigma_{k} (x, \xi_1, \xi_2)
\big| 
\le C_{\alpha, L} 2^{- k L}, 
\end{equation}
where $L\in \N$ can be taken arbitrarily large. 
For $k=0$, this estimate is obvious from the assumption 
$\sigma \in BS^{0}_{0,0}$. 
Suppose $k\ge 1$. 
We write $X=(x, \xi_1, \xi_2)$ and 
$Y=(y, \eta_1, \eta_2)$. 
Then, since $\psi_k (X)= \psi (2^{-k} X)$ and 
$\calF^{-1}\psi$ satisfies 
the moment condition 
$\int X^{\alpha} \calF^{-1}\psi (X)\, dX =0$, 
we have  
\begin{align*}
&
\partial_{X}^{\alpha} 
\sigma_{k} (X)
=
\big(
\calF^{-1}\psi_{k}
\ast 
 (\partial^{\alpha} \sigma) 
\big)
(X)
\\
&
= 
\int_{\R^{3n}} 
2^{3 k n} 
( \calF^{-1}\psi  ) 
(2^k Y )  
\bigg(
(\partial^{\alpha} \sigma) (X-Y) 
- 
\sum_{|\beta|<L} 
\frac{ \partial^{\beta + \alpha} \sigma (X) }{\beta !}\, 
(-Y)^{\beta}\bigg)
\, dY.   
\end{align*}
Since $\psi \in \calS$ and since 
the derivatives of $\sigma$ are bounded, 
the integrand on the right hand side is 
bounded by 
\[
C_{\alpha, L}
2^{3 k n} 
(1+ 2^k |Y| )^{-N-L} 
|Y|^{L}
=
C_{\alpha, L}
 2^{-kL}\, 2^{3 k n} 
(1+ 2^k |Y| )^{-N-L} 
|2^k Y|^{L}
\]
and thus the estimate \eqref{dasigmakLinfty} follows.

We consider the symbol  
\begin{equation*}
\widetilde{\sigma}_{k} (x, \xi_1, \xi_2)
=\sigma_{k} (2^k x, 2^{-k} \xi_1, 2^{-k} \xi_2). 
\end{equation*}
For bilinear pseudo-differential operators, 
a simple change of variables yields the formula 
\begin{equation*}
T_{\sigma_{k} } (f_1, f_2) (2^k x)
=
T_{\widetilde{\sigma}_{k} } 
(f_1 (2^k \cdot ), f_2 (2^k \cdot)) (x). 
\end{equation*}
For the norm of $(L^2, \ell^1)(\R^n)$, 
there exists a real number $a$ such that 
\begin{equation*}
\|g (\lambda \cdot )
\|_{ (L^2, \ell^1) (\R^n)} 
\lesssim \lambda^{a}  
\|g\|_{ (L^2, \ell^1) (\R^n)}
\;\; \text{for} \;\; 0<\lambda\le 1. 
\end{equation*}
(In fact, we can take $a=-n$ and this is the optimal 
number; however, 
the exact value of $a$ is not necessary 
for our argument.) 
For the $L^2$ norm, we have 
\begin{equation*}
\|g (\lambda \cdot )\|_{L^2 (\R^n)} 
=\lambda^{-n/2} \|g\|_{L^2 (\R^n)}, 
\quad 
\lambda >0.  
\end{equation*}
Combining 
these formulas, 
we see that 
\begin{equation}\label{TskL2L2h1}
\|T_{\sigma_k } \|_{L^2 \times L^{2} 
\to (L^2, \ell^1) } 
\lesssim 
2^{-k(n+a)} 
\|T_{\widetilde{\sigma}_k } 
\|_{L^2 \times L^{2} \to (L^2, \ell^1) } . 
\end{equation}
We shall estimate the operator norms 
of $T_{\widetilde{\sigma}_k } $ 
by using Theorem \ref{main-thm-1}.

From 
\eqref{dasigmakVk} and 
\eqref{dasigmakLinfty},  
we have  
\begin{equation*}
| \partial^{\alpha}_{x, \xi_1, \xi_2}
\widetilde{\sigma}_k (x, \xi_1, \xi_2) |
\le 
C_{\alpha, L}\, 
2^{2k |\alpha|} 
W_{k}(\xi_1, \xi_2) 
\end{equation*}
with 
\begin{equation*}
W_{k}(\xi_1, \xi_2) = 
\min  
\{2^{-kL},\, 
V_{k} (2^{-k} \xi_1, 2^{-k} \xi_2)
\}. 
\end{equation*}
From the definition of $V_k$, 
we easily see that 
\begin{equation}\label{Wkmoderate}
|\xi_1 - \xi_1'|\le 1 \;\;
\text{and}\;\; 
|\xi_2 - \xi_2'|\le 1 \;\;
\Rightarrow 
\;\; 
W_{k}(\xi_1, \xi_2)\approx 
W_{k}(\xi_1', \xi_2'), 
\end{equation} 
where the implicit constants in $\approx$ do not 
depend on $\xi_i, \xi_i'$, and $k$. 
We have 
\[
\|W_k\|_{ L^q } 
\le 
\|V_k (2^{-k}\xi_1, 2^{-k}\xi_2) 
\|_{L^q _{\xi_1, \xi_2} (\R^{2n})}
=
2^{2kn/q}
\|V_k\|_{L^q } 
\le 
c 
2^{2kn/q}
\|V\|_{L^q} 
\]
with $c$ independent of $k$. 
Also 
$\|W_k\|_{ L^{\infty} } \le 2^{-kL}$.  
Thus, since $q<4<\infty$, we have 
\begin{equation}\label{WkL4}
\|W_k \|_{L^4 } 
\le 
\|W_k \|_{L^q }^{1-\theta} 
\|W_k \|_{L^{\infty} } ^{\theta}
\le 
c  
(2^{-kL})^{\theta}
(2^{2kn/q})^{1-\theta}
\|V\|_{L^q}^{1-\theta}, 
\end{equation}
where $1-\theta= q/4 $. 
From \eqref{Wkmoderate}, \eqref{WkL4},  
and Proposition \ref{weightL4weak}, 
we see that $W_k|(\Z^n\times \Z^n)$, 
the restriction of $W_k$ to $\Z^n\times \Z^n$, 
belongs to $\calB (\Z^n \times \Z^n)$ and 
the constant $c$ of \eqref{BL222} for 
$V=W_k|(\Z^n\times \Z^n)$ is bounded by 
a constant times \eqref{WkL4}. 
Hence, using Theorem \ref{main-thm-1}, we obtain 
\begin{equation}\label{TsktildeL2L2h1}
\|T_{\widetilde{\sigma}_k } 
\|_{L^2 \times L^{2} \to (L^2, \ell^1) }
\le 
C_{L}\, 2^{2kM}\, 
(2^{-kL})^{\theta}
(2^{2kn/q})^{1-\theta}
\|V\|_{L^q}^{1-\theta}, 
\end{equation}
where $M$ is a constant depending only on 
the dimension $n$.  
(Notice that, with the aid of the closed graph theorem, 
Theorem \ref{main-thm-1} actually 
gives an estimate of the operator norm 
of a pseudo-differential operator in 
terms of the norms of 
certain finite number of 
the derivatives of the symbol.) 
Since $L$ can be taken arbitrarily large, 
\eqref{TskL2L2h1} 
and 
\eqref{TsktildeL2L2h1} 
imply \eqref{absolutesum}. 
\end{proof}

\section{ Sharpness of the theorems }
\label{sectionsharpness}

In this section, 
we shall prove that our main theorems, 
Theorems \ref{main-thm-1}, 
\ref{main-thm},  
and \ref{main-thm-2}, 
are sharp in several senses. 
Here we consider the cases of the following special weights:  
\begin{align*}
&W_{m}(\xi_1, \xi_2)
= \langle (\xi_1, \xi_2) \rangle ^{m}, 
\quad  
m\in (-\infty, 0], 
\\
&
W_{m_1, m_2}(\xi_1, \xi_2)
= \langle \xi_1 \rangle ^{m_1} 
\langle \xi_2  \rangle ^{m_2}, 
\quad 
m_1, m_2 \in (-\infty, 0].  
\end{align*}
We denote the class 
$BS ^{W}_{0,0} (\R^n)$ of Definition \ref{BSW00} 
for 
$W=W_{m}$ and $W=W_{m_1, m_2}$ 
simply by  
$BS ^{m}_{0,0} (\R^n)$ and 
$BS ^{(m_1, m_2)}_{0,0} (\R^n)$, 
respectively. 
Thus the class $BS ^{m}_{0,0} (\R^n)$ is the same as 
the one defined by \eqref{defBSm00}.

\subsection{Sharpness of the order $-n/2$ }

We have already observed that   
$W_{m}$ with $m=-n/2$ 
and 
$W_{m_1, m_2}$  
with  
$m_1, m_2 <0$ and $m_1+ m_2 =-n/2$ 
belong to $\calB (\Z^n \times \Z^n)$
(see Example \ref{example-of-V} and 
the proof given in Section \ref{sectionWeight}). 
Here we shall see that these 
are critical weights among the weights 
$W_m$ and $W_{m_1, m_2}$. 
Firstly, 
the weight $W_{m}$ with $-n/2 < m\le 0$ does not 
belong to $\calB (\Z^n \times \Z^n)$  
as we have already observed in Proof of 
Proposition \ref{propertiesB} (3). 
Next, the weight 
$W_{m_1, m_2}$ with $m_1, m_2 \in (-\infty, 0]$     
does not belong to  $\calB (\Z^n \times \Z^n)$ 
if $m_1+ m_2 >-n/2$ 
or if $m_1+ m_2 =-n/2$ and 
$m_1 m_2=0$. 
To show this, observe that 
$W_{m_1+ m_2}(\xi_1, \xi_2) 
\le 
W_{m_1, m_2}(\xi_1, \xi_2)$. 
Thus if 
$W_{m_1, m_2} \in \calB (\Z^n \times \Z^n)$ 
then 
$W_{m_1+ m_2} \in \calB (\Z^n \times \Z^n)$, 
which is possible only when 
$m_1+ m_2 \le -n/2$.  
Also Proposition \ref{single-L2} implies that 
the functions $W_{0, -n/2}$ and 
$W_{-n/2, 0}$ do not belong to 
$\calB (\Z^n \times \Z^n)$.

\subsection{Sharpness of $r\in [1,2]$ }

The next proposition shows that 
the range $1\le r \le 2$ 
in Theorems \ref{main-thm-1}, \ref{main-thm}, 
and \ref{main-thm-2} is in a sense optimal.

\begin{prop}\label{sharp-range}
Let $0<r<\infty$, $m \in (-\infty, 0]$, 
and assume 
$ \mathrm{Op} (BS^{m}_{0,0} (\R^n)) 
\subset 
B(L^2 \times L^2 \to L^r )$. 
Then $r \ge 1$.  
Moreover, $r \le 2$ in the case $m=-n/2$.
\end{prop}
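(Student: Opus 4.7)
The plan is to prove each inequality by an explicit scaling argument with a carefully chosen test symbol in the class $BS^m_{0,0}(\R^n)$. For the bound $r \ge 1$, I would exhibit a single symbol in $BS^m_{0,0}(\R^n)$ whose operator fails to map $L^2 \times L^2$ into $L^r$ when $r < 1$. Fix a function $\sigma_0 \in C_c^\infty(\R^{2n})$, viewed as a symbol depending only on $(\xi_1, \xi_2)$, with $\sigma_0 \equiv 1$ on $B(0, 1)$. Since $\sigma_0$ is bounded, smooth, and compactly supported, it belongs to $BS^m_{0,0}(\R^n)$ for every $m \le 0$. Choose $f \in \calS(\R^n)$ with $\widehat f \in C_c^\infty$ supported in $B(0, 1/2)$ and, for $\lambda \ge 1$, set $f_\lambda(x) = f(x/\lambda)$. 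The support of $\widehat{f_\lambda}$ shrinks inside the region where $\sigma_0 \equiv 1$, so $T_{\sigma_0}(f_\lambda, f_\lambda) = f_\lambda^2$ literally. Elementary rescaling gives $\|T_{\sigma_0}(f_\lambda, f_\lambda)\|_{L^r} = \lambda^{n/r}\|f^2\|_{L^r}$ and $\|f_\lambda\|_{L^2}^2 = \lambda^n \|f\|_{L^2}^2$; for $r < 1$ the ratio $\lambda^{n/r - n}$ is unbounded as $\lambda \to \infty$, contradicting the boundedness of the single operator $T_{\sigma_0}$.

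For the bound $r \le 2$ when $m = -n/2$, I would first invoke the closed graph theorem exactly as in the proof of Theorem \ref{main-thm-1}(1) to convert the hypothesis into a uniform estimate: the norm $\|T_\sigma\|_{L^2 \times L^2 \to L^r}$ is dominated by a fixed finite collection of $BS^{-n/2}_{0,0}$ semi-norms of $\sigma$. I then test this uniform estimate on a family of high-frequency symbols. Take $\psi \in C_c^\infty(\R^n)$ supported in $\{1/2 < |\xi| < 5/2\}$ with $\psi \equiv 1$ on $\{1 \le |\xi| \le 2\}$, and define $\sigma_N(\xi_1, \xi_2) = N^{-n/2}\psi(\xi_1/N)\psi(\xi_2/N)$ for $N \ge 1$. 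On $\supp \sigma_N$ one has $1 + |\xi_1| + |\xi_2| \approx N$, and every derivative contributes at most an extra factor $N^{-1}$, so the $BS^{-n/2}_{0,0}$ semi-norms of $\sigma_N$ are bounded uniformly in $N$. Choose $\phi \in C_c^\infty(\R^n)$ supported in $\{1 \le |\xi| \le 2\}$ and set $\widehat{f_N}(\xi) = N^{-n/2}\phi(\xi/N)$. Since $\sigma_N \equiv N^{-n/2}$ on the product of the supports, one has $T_{\sigma_N}(f_N, f_N)(x) = N^{-n/2} f_N(x)^2 = N^{n/2}\check{\phi}(Nx)^2$; a change of variable gives $\|T_{\sigma_N}(f_N, f_N)\|_{L^r} \approx N^{n/2 - n/r}$ while $\|f_N\|_{L^2}$ is independent of $N$. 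For $r > 2$ this diverges as $N \to \infty$, contradicting the uniform operator bound supplied by closed graph.

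The main technical point to verify carefully is that $\sigma_N$ lies in $BS^{-n/2}_{0,0}$ with semi-norms independent of $N$; the annular localization of $\psi(\cdot/N)$ is exactly what forces $1 + |\xi_1| + |\xi_2| \approx N$ on the support, so that the prefactor $N^{-n/2}$ matches the weight $(1+|\xi_1|+|\xi_2|)^{-n/2}$, while the extra negative powers of $N$ produced by differentiation are harmless for $N \ge 1$. Once this is settled, the scaling identities are routine computations, and the closed graph step is essentially a verbatim reuse of the opening of the proof of Theorem \ref{main-thm-1}(1).
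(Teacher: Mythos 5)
Your proof is correct, but it takes a route genuinely different from the paper's in both halves, so let me compare them.

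For $r\ge 1$, the paper does not run a scaling argument of its own: it simply cites the known necessary condition for bilinear Fourier multipliers (Grafakos--Torres \cite[Proposition 5]{GT}, \cite[Proposition 7.3.7]{grafakos 2014m}) that a nonzero multiplier bounded from $L^p\times L^q$ to $L^r$ forces $1/p+1/q\ge 1/r$, applied to any nonzero $\sigma\in\calS(\R^{2n})$. Your dilation argument with the single symbol $\sigma_0\equiv 1$ near the origin and the family $f_\lambda(x)=f(x/\lambda)$ is essentially a self-contained proof of that necessary condition in the special case $p=q=2$; the computation $\lambda^{n/r-n}\to\infty$ for $r<1$ is correct, and since only one symbol is involved no closed graph theorem is needed here. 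This is a perfectly good alternative and has the merit of being self-contained.

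For $r\le 2$, the divergence is more substantial. You use a family $\sigma_N(\xi_1,\xi_2)=N^{-n/2}\psi(\xi_1/N)\psi(\xi_2/N)$ with seminorms uniformly bounded in $N$, and then need the closed graph theorem (as in the proof of Theorem \ref{main-thm-1}(1)) to convert pointwise boundedness of each $T_{\sigma_N}$ into an estimate uniform in $N$. Your verification that the seminorms are uniform is sound: the annular support forces $1+|\xi_1|+|\xi_2|\approx N$, and each $\xi$-derivative costs an extra harmless $N^{-1}$. The paper instead sidesteps the closed graph step entirely by assembling the scales into one symbol
\[
\sigma(\xi_1,\xi_2)=\sum_{j\in\N_0}2^{-jn/2}\,\Psi\bigl(2^{-j}(\xi_1,\xi_2)\bigr)\in BS^{-n/2}_{0,0},
\]
and choosing $\widehat{f_{1,k}}=\widehat{f_{2,k}}=2^{-kn/2}\psi(2^{-k}\cdot)$ with $\psi$ supported in an annulus chosen so that only the $j=k$ term of $\sigma$ contributes. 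Boundedness of the single operator $T_\sigma$ then directly produces the estimate $2^{kn(1/2-1/r)}\lesssim 1$, forcing $r\le 2$. The trade-off is that the paper's argument is more elementary (no closed graph, one symbol), while yours is a natural ``uniform family plus closed graph'' strategy that is already in the air given the proof of Theorem \ref{main-thm-1}(1). Both are complete and correct; neither has a gap.
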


\begin{proof}
If the symbol $\sigma (x, \xi_1, \xi_2)$ is 
independent of $x$, then 
$\sigma$ is called a Fourier multiplier and 
$T_{\sigma}$ is called a bilinear Fourier multiplier operator. 
For bilinear Fourier multiplier operators, 
the following is known: 
if a nonzero Fourier multiplier operator $T_{\sigma}$ 
is bounded from $L^p \times L^q$ to $L^r$, 
$1 \le p,q <\infty$, and $0<r<\infty$,
then $1/p+1/q \ge 1/r$
(see \cite[Proposition 5]{GT} and 
\cite[Proposition 7.3.7]{grafakos 2014m}). 
Let $\sigma(\xi_1,\xi_2)$ be a 
nonzero function in $\calS((\R^n)^2)$. 
Then, since $\sigma(\xi_1,\xi_2)$ belongs to
$BS^{\widetilde{m}}_{0,0}$ for 
any $\widetilde{m} \le 0$,
the assumption of the proposition implies 
$T_{\sigma}: L^2 \times L^2 \to L^r$.
Hence, by the fact mentioned above, 
we must have $1/2+1/2 \ge 1/r$, that is, $r \ge 1$.

Next we show that $r\le 2$ in the case $m=-n/2$. 
Assume that
$T_{\sigma} : L^2 \times L^2 \to L^r$
for all $\sigma \in BS^{-n/2}_{0,0}$.
Let
$\Psi \in \calS((\R^n)^2)$
and $\psi \in \calS(\R^n)$ be such that
$\Psi (\zeta)=1$ on $ \{ 2^{-1/4} \le |\zeta| \le 2^{1/4} \} $,
$\mathrm{supp}\, \Psi \subset
\{2^{-1/2} \le |\zeta| \le 2^{1/2}\}$, 
$\mathrm{supp}\, \psi 
\subset \{2^{-3/4} \le |\eta| \le 2^{-1/4}\}$, 
and $\psi \neq 0$. 
We set
\begin{align*}
&\sigma(\xi_1, \xi_2)
=\sum_{j \in \N_0}
2^{-jn/2}\Psi ( 2^{-j} (\xi_1, \xi_2)),
\quad
(\xi_1, \xi_2) \in \R^{2n}, 
\\
&
\widehat{f_{1,k}}(\eta )= 
\widehat{f_{2,k}}(\eta )
=2^{-kn/2}\psi(2^{-k}\eta),
\quad  \eta \in \R^n, \quad k \in \N_0. 
\end{align*}
Then $\sigma \in BS^{-n/2}_{0,0}$
(in fact, $\sigma \in BS^{-n/2}_{1,0}$) 
and $\|f_{i,k}\|_{L^2}=\|\psi\|_{L^2}$ does not 
depend on $k$.
From the support conditions on 
$\Psi$ and $\psi$, 
we see that
$\Psi ( 2^{-j} (\xi_1, \xi_2)) 
\widehat{f_{1,k}}(\xi_1)\widehat{f_{2,k}}(\xi_2)$
equals
$\widehat{f_{1,k}}(\xi_1)\widehat{f_{2,k}}(\xi_2)$ if $j=k$
and vanishes if $j \neq k$.
Thus 
\[
T_{\sigma}(f_{1,k},f_{2,k})(x)
=2^{-kn/2}\left(2^{kn/2}\check{\psi}(2^k x)\right)^2
=2^{kn/2}\check{\psi}(2^{k}x)^2. 
\]
Hence our assumption implies that
\[
2^{kn(1/2-1/r)}
\approx \|T_{\sigma}(f_{1,k},f_{2,k})\|_{L^r}
\lesssim \|f_{1,k}\|_{L^2}\|f_{2,k}\|_{L^2} \approx 1,
\quad k \in \N_0, 
\]
which is possible only when $1/2-1/r \le 0$,
namely $r \le 2$.
\end{proof}

\subsection{Sharpness of $s_0, s_1, s_2$ in 
Theorem \ref{main-thm-2}}

In this subsection, we shall prove that the 
conditions on $s_0, s_1, s_2$ in 
Theorem \ref{main-thm-2} are sharp. 
First we shall prove the following.

\begin{prop}\label{sharp-smoothness}
Let $1 \le r \le 2$
and $\boldsymbol{s}
=(s_0,s_1,s_2) \in [0,\infty)^3$.
If all bilinear pseudo-differential operators 
$T_{\sigma}$ 
with symbols $\sigma$ on $(\R^n)^3$ satisfying
\begin{equation}\label{sharp-smoothness-assump}
\sup_{\boldsymbol{k} \in (\N_0)^3}
2^{\boldsymbol{k}\cdot\boldsymbol{s}}
\left\|\langle (\xi_1,\xi_2) \rangle^{n/2}
\Delta^{\ast}_{\boldsymbol{k}}\sigma(x,\xi_1,\xi_2)\right\|_{L^{\infty}_{x, \xi_1, \xi_2}((\R^n)^3)}
<\infty
\end{equation}
are bounded from $L^2 \times L^2$
to $L^r$, then $s_0 \ge n/2$,
$s_1,s_2 \ge n/r-n/2$, 
and $s_1+s_2 > n/r$.
\end{prop}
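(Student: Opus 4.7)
The plan is to prove each necessary condition by contradiction, using the closed graph theorem. Under the hypothesis of the proposition, an argument as in \cite[Lemma 2.6]{BBMNT} furnishes constants $M\in\N$ and $C>0$ such that
\[
\|T_\sigma\|_{L^2\times L^2\to L^r}
\le C\max_{|\boldsymbol{k}|_\infty\le M}2^{\boldsymbol{k}\cdot\boldsymbol{s}}\bigl\|\langle(\xi_1,\xi_2)\rangle^{n/2}\Delta^*_{\boldsymbol{k}}\sigma\bigr\|_{L^\infty}
\]
for every symbol $\sigma$. It therefore suffices, for each alleged necessary inequality, to exhibit a sequence of symbols whose supremum on the right stays uniformly bounded while the operator norm on the left diverges.

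For the lower bounds $s_0\ge n/2$ and $s_i\ge n/r-n/2$ ($i=1,2$), the plan is to adapt the scaling construction used in the proof of Proposition \ref{sharp-range}. For each $j\in\N$, take a building block of the form
\[
\sigma^{(j)}(x,\xi_1,\xi_2)=2^{-jn/2}\,\Psi\bigl(2^{-j}(\xi_1,\xi_2)\bigr)\,E_j(x,\xi_1,\xi_2),
\]
where $\Psi$ is the annular bump already used in Proposition \ref{sharp-range} and $E_j$ is a unimodular phase factor inserted to concentrate $\Delta^*_{\boldsymbol{k}}\sigma^{(j)}$ at a single dyadic index in the variable being tested, namely $E_j=e^{i2^j\theta\cdot x}$ for the $s_0$ case and $E_j=e^{i2^j\theta\cdot\xi_i}$ for the $s_i$ case. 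Testing these symbols against the wave packets $\widehat{f_{i,k}}(\eta)=2^{-kn/2}\psi(2^{-k}\eta)$ at the matched scale $k=j$, and running the computation parallel to that of Proposition \ref{sharp-range}, one should obtain an asymptotic lower bound on $\|T_{\sigma^{(j)}}(f_{1,j},f_{2,j})\|_{L^r}/(\|f_{1,j}\|_{L^2}\|f_{2,j}\|_{L^2})$ behaving as $2^{j\alpha}$ with a positive exponent $\alpha$ precisely when the corresponding inequality is violated, contradicting the closed-graph bound.

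For the strict inequality $s_1+s_2>n/r$, which is the most delicate endpoint, the plan is to use a randomized construction in the spirit of the proof of Theorem \ref{main-thm-1}\,(1). Take
\[
\sigma_N(\xi_1,\xi_2)=\sum_{k=1}^N\epsilon_k\,2^{-k(s_1+s_2)}\tau_k(\xi_1,\xi_2),
\]
where each block $\tau_k$ is an oscillatory bump engineered to have Littlewood--Paley content concentrated at the index $(0,k,k)$ with $\|\langle\cdot\rangle^{n/2}\tau_k\|_{L^\infty}\approx 1$, and $\{\epsilon_k\}$ are independent Rademacher variables; by construction the sup-norm of $\sigma_N$ in \eqref{sharp-smoothness-assump} is bounded uniformly in $N$. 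Testing against a suitable pair of wave-packet inputs $(f_{1,N},f_{2,N})$, expanding $T_{\sigma_N}(f_{1,N},f_{2,N})$ as a random sum and applying Khintchine's inequality as in \eqref{Khintchine}, the aim is to derive a lower bound on the operator norm that diverges with $N$ in the critical case $s_1+s_2=n/r$. The main obstacle lies in this last step: the oscillation centres of the $\tau_k$ and the Fourier supports of the test functions must be arranged so that Khintchine's inequality returns an $\ell^2$-average genuinely growing with $N$, rather than collapsing to a convergent geometric series, and I expect this to be achieved by a modulated-bump construction modelled on the one already appearing in the proof of Theorem \ref{main-thm-1}\,(1).
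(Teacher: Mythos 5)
Your preliminary closed-graph reduction is harmless, but the constructions you propose do not establish the three necessary conditions, and the obstruction is not a technical detail that can be patched — it is the scaling itself.

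Take the case $s_0\ge n/2$. Once you normalize your symbol so that \eqref{sharp-smoothness-assump} holds uniformly — which forces an extra damping factor $2^{-js_0}$ in front of $2^{-jn/2}\Psi(2^{-j}\cdot)e^{i2^j\theta\cdot x}$ — the computation parallel to Proposition \ref{sharp-range} gives
\[
T_{\sigma^{(j)}}(f_{1,j},f_{2,j})(x)\approx 2^{j(-s_0+n/2)}\,e^{i2^j\theta\cdot x}\,\check\psi(2^jx)^2,
\]
so the output lives at the spatial scale $2^{-j}$, and
\[
\|T_{\sigma^{(j)}}(f_{1,j},f_{2,j})\|_{L^r}\approx 2^{j(-s_0+n/2-n/r)} .
\]
Since $n/2-n/r\le 0$ for $r\le 2$ and $s_0\ge 0$, this never diverges, so you cannot deduce $s_0\ge n/2$ this way. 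The missing idea is to \emph{decouple} the $x$-scale of the output from the $\xi$-scale of the test functions. The paper does this by inserting the conjugate phase $e^{-ix\cdot(\xi_1+\xi_2)}$ and a fixed cut-off $\varphi(x)$ directly into the symbol, so that $T_\sigma(f_{1,j},f_{2,j})(x)=c_j\varphi(x)$ with $c_j\approx 2^{j(-s_0+n/2)}$; the $L^r$ norm then carries no $2^{-jn/r}$ loss and the divergence occurs exactly when $s_0<n/2$. The case $s_i\ge n/r-n/2$ is worse for your construction: taking $E_j=e^{i2^j\theta\cdot\xi_1}$ translates the $\xi_1$-factor by $2^j\theta$ in $x$ while leaving the $\xi_2$-factor centred at the origin, so the two factors in $T_{\sigma^{(j)}}(f_{1,j},f_{2,j})(x)$ are supported in essentially disjoint regions and the product is negligible. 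The paper instead uses a symbol with a polynomially decaying factor $\langle x\rangle^{-s_1}$ together with $e^{-ix\cdot\xi_1}$ and a \emph{fixed} $f_1$ (not rescaled), so the output is $\langle x\rangle^{-s_1}2^{-jn/2}\check\varphi(2^{-j}x)$ and its $L^r$ norm on $|x|\le 2^j$ grows like $2^{j(n/r-n/2-s_1)}$, which is the exponent you need.

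For the strict inequality $s_1+s_2>n/r$, your randomized construction is not needed and introduces the very difficulty you point out. The paper obtains the strict inequality from a single deterministic symbol: with $\sigma(x,\xi_1,\xi_2)=\langle x\rangle^{-s_1-s_2}e^{-ix\cdot(\xi_1+\xi_2)}\varphi(\xi_1)\varphi(\xi_2)$ and fixed $\widehat{f_1}=\widehat{f_2}=\varphi$, one has $T_\sigma(f_1,f_2)(x)=C\langle x\rangle^{-s_1-s_2}$, which belongs to $L^r$ if and only if $r(s_1+s_2)>n$. Strictness comes directly from the Lebesgue integrability criterion for $\langle x\rangle^{-s}$ rather than from any probabilistic averaging. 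You should replace your whole plan with constructions of this type; the common structural ingredient you are missing is the insertion of the conjugate phase $e^{-ix\cdot(\xi_1+\xi_2)}$ (or $e^{-ix\cdot\xi_i}$) and of controlled polynomial factors $\langle x\rangle^{-s}$, which together let you place the Littlewood--Paley mass of the symbol at the correct dyadic indices and arrange the output to be a fixed explicit function whose $L^r$ membership encodes the inequality.
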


\begin{proof}
In this proof, we use nonnegative functions
$\varphi,  \theta \in \calS(\R^n)$
such that $\varphi (x)=1$ on $\{ |x| \le 1\}$,
$\mathrm{supp}\, \varphi \subset \{ |x|\le 2\}$, 
$\mathrm{supp}\, \theta \subset \{1/2 \le |\xi| \le 2\}$, 
and $\theta \neq 0$.
Let $N_i$
be a nonnegative integer satisfying $N_i \ge s_i$
for $i=0,1,2$.

We first prove the necessity of the condition
$s_0 \ge n/2$.  
Set
\begin{align*}
&\sigma(x,\xi_1,\xi_2)
=\varphi(x)e^{-ix\cdot(\xi_1+\xi_2)}
\langle (\xi_1,\xi_2) \rangle^{-s_0-n/2},
\\
&
\widehat{f_{1,j}}(\eta)
=\widehat{f_{2,j}}(\eta)
=2^{-jn/2} \theta (2^{-j}\eta),
\quad j \in \N_0.
\end{align*}
Since
\begin{equation}\label{sharp-smoothness-p1}
|\partial^{\alpha_0}_{x}\partial^{\alpha_1}_{\xi_1}
\partial^{\alpha_2}_{\xi_2}
\sigma(x,\xi_1,\xi_2)|
\le C_{\alpha_0, \alpha_1,\alpha_2}
\langle (\xi_1,\xi_2) \rangle^{-s_0-n/2+|\alpha_0|},
\end{equation}
in the same way as in Proof of 
Proposition \ref{classicalderivative} 
(see the argument around  
\eqref{taylorofsymbol}), 
we have  
\begin{align*}
|\Delta^{\ast}_{\boldsymbol{k}}\sigma (x,\xi_1,\xi_2)|
&\lesssim
\langle (\xi_1, \xi_2)\rangle^{-s_0-n/2+N_0}
2^{(k_0+k_1+k_2)n} 
\\
&\qquad \times
\int_{(\R^{n})^3}|\check{\psi} (2^{k_0}y)| |y|^{N_0}\,
|\check{\psi} (2^{k_1}\eta_1)|  |\eta_1|^{N_1}\,
|\check{\psi} (2^{k_2}\eta_2)|  |\eta_2|^{N_2} 
\\
&\qquad \times
\langle (\eta_1, \eta_2)\rangle^{|-s_0-n/2+N_0|}
\, dY
\\
&\approx
\langle (\xi_1, \xi_2)\rangle^{-s_0-n/2+N_0}
2^{-k_0N_0}2^{-k_1N_1}2^{-k_2N_2}  
\end{align*}
for all $k_0, k_1, k_2 \in \N$, 
where $dY=dyd\eta_1d\eta_2$. 
If we use \eqref{sharp-smoothness-p1}
with $\alpha_0=0$ and
the expression
\begin{align*}
&
\Delta^{\ast}_{\boldsymbol{k}}\sigma (x,\xi_1,\xi_2)
\\
&=
2^{(k_0+k_1+k_2)n} 
\sum_{|\alpha_1| = N_1} \frac{1}{\alpha_1!} 
\sum_{|\alpha_2| = N_2} \frac{1}{\alpha_2!} 
\\
&\qquad\times
\int_{(\R^{n})^3}
\check \psi (2^{k_0}y)\,
\check \psi (2^{k_1}\eta_1) (-\eta_1)^{\alpha_1} \,
\check \psi (2^{k_2}\eta_2) (-\eta_2)^{\alpha_2} 
\\
&\qquad\times
\int_{[0,1]^2} \bigg( \prod_{i=1}^2 
N_i (1-t_i)^{N_i-1} \bigg)
\big(\partial_{\xi_1}^{\alpha_1}
\partial_{\xi_2}^{\alpha_2} \sigma \big)
(x-y, \xi_1-t_1 \eta_1, \xi_2-t_2 \eta_2 ) 
\, dt_1dt_2 dY 
\end{align*}
instead of \eqref{taylorofsymbol}, we have
\[
|\Delta^{\ast}_{\boldsymbol{k}}\sigma (x,\xi_1,\xi_2)|
\lesssim
\langle (\xi_1, \xi_2)\rangle^{-s_0-n/2}
2^{-k_1N_1}2^{-k_2N_2} 
\] 
for $k_1, k_2 \in \N$. 
It is also easy to see that the above estimates 
actually hold for all $k_1, k_2, k_3 \in \N_0$. 
Hence, taking $0\le \theta_0 \le 1$
satisfying $s_0=N_0\theta_0$,
we have
\begin{equation}\label{sharp-smoothness-p2}
\begin{split}
|\Delta^{\ast}_{\boldsymbol{k}}\sigma (x,\xi_1,\xi_2)|
&=|\Delta^{\ast}_{\boldsymbol{k}}
\sigma (x,\xi_1,\xi_2)|^{1-\theta_0}
|\Delta^{\ast}_{\boldsymbol{k}}
\sigma (x,\xi_1,\xi_2)|^{\theta_0}
\\
&\lesssim
\left(\langle (\xi_1, \xi_2)\rangle^{-s_0-n/2}
2^{-k_1N_1}2^{-k_2N_2}\right)^{1-\theta_0}
\\
&\qquad \times
\left(\langle (\xi_1, \xi_2)\rangle^{-s_0-n/2+N_0}
2^{-k_0N_0}2^{-k_1N_1}2^{-k_2N_2}\right)^{\theta_0}
\\
&=\langle (\xi_1, \xi_2)\rangle^{-n/2}
2^{-k_0s_0}2^{-k_1N_1}2^{-k_2N_2},
\end{split}
\end{equation}
which implies that
$\sigma$ satisfies \eqref{sharp-smoothness-assump}.
Then, since
\begin{equation*}
T_{\sigma}(f_{1,j},f_{2,j})(x)
=\left(
\frac{2^{-jn}}{(2\pi)^{2n}}
\int_{(\R^n)^2}
\langle (\xi_1,\xi_2) \rangle^{-s_0-n/2}
\theta (2^{-j}\xi_1) \theta (2^{-j}\xi_2)\, d\xi_1d\xi_2\right)
\varphi(x)
\end{equation*}
and since 
\begin{equation*}
2^{-jn} 
\int_{(\R^n)^2}
\langle (\xi_1,\xi_2) \rangle^{-s_0-n/2}
\theta (2^{-j}\xi_1) \theta (2^{-j}\xi_2)\, d\xi_1d\xi_2 
\approx 
2^{j(-s_0+n/2)},
\end{equation*}
it follows from our assumption that
\[
2^{j(-s_0+n/2)}
\approx 
\|T_{\sigma}(f_{1,j},f_{2,j})\|_{L^r}
\lesssim \|f_{1,j}\|_{L^2}\|f_{2,j}\|_{L^2}
\approx 1,
\quad j \in \N_0.
\]
This is possible only if $-s_0+n/2 \le 0$,
namely $s_0 \ge n/2$.

We next prove the necessity of the condition
$s_i \ge r/n-n/2$, $i=1,2$. 
Set 
\begin{align*}
&\sigma(x,\xi_1,\xi_2)
=\langle x \rangle^{-s_1}
e^{-ix\cdot \xi_1}
\varphi(\xi_1)\varphi(\xi_2),
\\
&\widehat{f_1}(\xi_1)=\varphi(\xi_1),
\quad
\widehat{f_{2,j}}(\xi_2)
=2^{jn/2}\varphi(2^j \xi_2),
\quad j \in \N_0.
\end{align*}
Since
\[
|\partial^{\alpha_0}_{x}\partial^{\alpha_1}_{\xi_1}
\partial^{\alpha_2}_{\xi_2}
\sigma(x,\xi_1,\xi_2)|
\le C_{\alpha_0, \alpha_1,\alpha_2}
\langle x \rangle^{-s_1+|\alpha_1|}
\langle (\xi_1,\xi_2) \rangle^{-n/2},
\]
by the same argument as above,
\[
|\Delta^{\ast}_{\boldsymbol{k}}\sigma (x,\xi_1,\xi_2)|
\lesssim
\begin{cases}
\langle x \rangle^{-s_1}
\langle (\xi_1, \xi_2)\rangle^{-n/2}
2^{-k_0N_0}2^{-k_2N_2}
\\
\langle x \rangle^{-s_1+N_1}
\langle (\xi_1, \xi_2)\rangle^{-n/2}
2^{-k_0N_0}2^{-k_1N_1}2^{-k_2N_2} .
\end{cases}
\]
In the same way as in 
\eqref{sharp-smoothness-p2},
but replacing $\theta_0$ by   
$0\le \theta_1 \le 1$ 
satisfying $s_1=N_1\theta_1$, 
we have
\[
|\Delta^{\ast}_{\boldsymbol{k}}\sigma (x,\xi_1,\xi_2)|
\lesssim
\langle (\xi_1, \xi_2)\rangle^{-n/2}
2^{-k_0N_0}2^{-k_1s_1}2^{-k_2N_2},
\]
which implies that
$\sigma$ satisfies \eqref{sharp-smoothness-assump}.
On the other hand, 
since 
$\varphi(2^{j}\xi_2)\varphi(\xi_2)
=\varphi(2^{j}\xi_2)$ for $j \ge 1$,
we have 
\[
T_{\sigma}(f_1,f_{2,j})(x)
=\langle x \rangle^{-s_1}
\left(\frac{1}{(2\pi)^n}
\int_{\R^n}\varphi(\xi_1)^2\, d\xi_1\right)
2^{-jn/2}\check{\varphi}(2^{-j} x),
\quad j \ge 1,
\]
and thus 
\begin{align*}
\|T_{\sigma}(f_1,f_{2,j})\|_{L^r}
&\approx 
\|\langle x \rangle^{-s_1}
\, 2^{-jn/2}\, 
\check{\varphi}(2^{-j}x)\|_{L^r}
\ge
\|\langle x \rangle^{-s_1}
\, 2^{-jn/2}\, 
\check{\varphi}(2^{-j}x)\|_{L^r(|x| \le 2^j)}
\\
&\gtrsim 2^{j(-s_1+n/r -n/2)}. 
\end{align*}
Thus our assumption implies that
\[
2^{j(-s_1+n/r-n/2)}
\lesssim
\|T_{\sigma}(f_1,f_{2,j})\|_{L^r}
\lesssim \|f_1\|_{L^2}\|f_{2,j}\|_{L^2}
\approx 1,
\quad j \ge 1, 
\]
which is possible only if $-s_1+n/r-n/2 \le 0$,
namely $s_1 \ge n/r-n/2$.
By interchanging the roles of $\xi_1$ and $\xi_2$,
we also have $s_2 \ge n/r-n/2$.

Finally we prove the necessity of the condition
$s_1+s_2 > n/r$. 
Set
\begin{align*}
&\sigma(x,\xi_1,\xi_2)
=\langle x \rangle^{-s_1-s_2}
e^{-ix\cdot(\xi_1+\xi_2)}
\varphi(\xi_1)\varphi(\xi_2),
\\
&\widehat{f_1}
=\widehat{f_2}=\varphi . 
\end{align*}
Since
\[
|\partial^{\alpha_0}_{x}\partial^{\alpha_1}_{\xi_1}
\partial^{\alpha_2}_{\xi_2}
\sigma(x,\xi_1,\xi_2)|
\le C_{\alpha_0, \alpha_1,\alpha_2}
\langle x \rangle^{-s_1-s_2+|\alpha_1|+|\alpha_2|}
\langle (\xi_1,\xi_2) \rangle^{-n/2},
\]
by the same argument as above,
\[
|\Delta^{\ast}_{\boldsymbol{k}}
\sigma (x,\xi_1,\xi_2)|
\lesssim
\begin{cases}
\langle x \rangle^{-s_1-s_2}
\langle (\xi_1, \xi_2)\rangle^{-n/2}
2^{-k_0N_0}
\\
\langle x \rangle^{-s_1-s_2+N_1}
\langle (\xi_1, \xi_2)\rangle^{-n/2}
2^{-k_0N_0}2^{-k_1N_1}
\\
\langle x \rangle^{-s_1-s_2+N_2}
\langle (\xi_1, \xi_2)\rangle^{-n/2}
2^{-k_0N_0}2^{-k_2N_2}
\\
\langle x \rangle^{-s_1-s_2+N_1+N_2}
\langle (\xi_1, \xi_2)\rangle^{-n/2}
2^{-k_0N_0}2^{-k_1N_1}2^{-k_2N_2} .
\end{cases}
\]
Taking $0\le \theta_i\le 1$
satisfying $s_i=N_i\theta_i$ for $i=1,2$,
we have
\begin{align*}
|\Delta^{\ast}_{\boldsymbol{k}}\sigma|
&=|\Delta^{\ast}_{\boldsymbol{k}}
\sigma|^{(1-\theta_1)(1-\theta_2)}
|\Delta^{\ast}_{\boldsymbol{k}}
\sigma|^{\theta_1(1-\theta_2)}
|\Delta^{\ast}_{\boldsymbol{k}}
\sigma|^{(1-\theta_1)\theta_2}
|\Delta^{\ast}_{\boldsymbol{k}}
\sigma|^{\theta_1\theta_2}
\\
&\lesssim
\left(\langle x \rangle^{-s_1-s_2}
\langle (\xi_1, \xi_2)\rangle^{-n/2}
2^{-k_0N_0} 
\right)^{(1-\theta_1)(1-\theta_2)}
\\
&\qquad \times
\left(\langle x \rangle^{-s_1-s_2+N_1}
\langle (\xi_1, \xi_2)\rangle^{-n/2}
2^{-k_0N_0}2^{-k_1N_1}\right)^{\theta_1(1-\theta_2)}
\\
&\qquad \times
\left(\langle x \rangle^{-s_1-s_2+N_2}
\langle (\xi_1, \xi_2)\rangle^{-n/2}
2^{-k_0N_0}2^{-k_2N_2}\right)^{(1-\theta_1)\theta_2}
\\
&\qquad \times
\left(\langle x \rangle^{-s_1-s_2+N_1+N_2}
\langle (\xi_1, \xi_2)\rangle^{-n/2}
2^{-k_0N_0}2^{-k_1N_1}2^{-k_2N_2}\right)^{\theta_1\theta_2}
\\
&=\langle (\xi_1, \xi_2)\rangle^{-n/2}
2^{-k_0N_0}2^{-k_1s_1}2^{-k_2s_2},
\end{align*}
which implies that
$\sigma$ satisfies \eqref{sharp-smoothness-assump}.
Therefore,
since
\[
T_{\sigma}(f_1,f_2)(x)
=\langle x \rangle^{-s_1-s_2}
\prod_{i=1}^2\left(\frac{1}{(2\pi)^n}
\int_{\R^n}\varphi(\xi_i)^2\, d\xi_i\right),
\]
it follows from our assumption that
$\langle x \rangle^{-s_1-s_2}$
belongs to $L^r$.
This is possible only if $r(-s_1-s_2) < -n$,
namely $s_1+s_2 >n/r$.
\end{proof}

In the corollary below,   
$BS^{-n/2}_{0,0} (s_0, s_1, s_2; \R^n)$ 
denotes the class 
$BS ^{W, \ast}_{0,0} (s_0, s_1, s_2; \R^n)$ of 
Definition \ref{defBSW2} 
for 
$W(\xi_1, \xi_2)=W_{-n/2}(\xi_1, \xi_2) 
=\langle (\xi_1, \xi_2) \rangle^{-n/2} $.

\begin{cor}\label{sharp-smoothness-2}
Let $1 \le r \le 2$
and $\boldsymbol{s}=(s_0,s_1,s_2)
\in [0, \infty)^3$. 
Assume all bilinear pseudo-differential operators 
$T_{\sigma}$ with 
$\sigma \in 
BS^{-n/2}_{0,0}(s_0, s_1, s_2; \R^n)$ are 
bounded from $L^2 \times L^2$
to $L^r$. 
Then 
$s_0 \ge n/2$,
$s_1,s_2 \ge n/r-n/2$, 
and $s_1+s_2 \ge n/r$.
\end{cor}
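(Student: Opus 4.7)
The plan is to deduce the corollary from Proposition \ref{sharp-smoothness} by a routine limiting argument resting on a simple embedding between the two function classes involved. The underlying observation is that the condition \eqref{sharp-smoothness-assump} controls an $L^\infty$--norm through a supremum in $\boldsymbol{k}$, while membership in $BS^{-n/2}_{0,0}(s_0,s_1,s_2;\R^n)$ controls a weaker $L^2_{ul}$--norm through a \emph{sum} in $\boldsymbol{k}$; a loss of $\epsilon$ in each exponent is precisely what converts the former into the latter.

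First I would record the following embedding: if $\sigma$ satisfies \eqref{sharp-smoothness-assump} with exponent vector $\boldsymbol{s}' = (s'_0, s'_1, s'_2)$ that dominates $\boldsymbol{s} = (s_0, s_1, s_2)$ strictly in every component, then $\sigma \in BS^{-n/2}_{0,0}(s_0, s_1, s_2; \R^n)$. This is immediate from the trivial bound $\|f\|_{L^2_{ul}(\R^{3n})}\le \|f\|_{L^\infty(\R^{3n})}$ (valid because $|Q^3|=1$) combined with the summability of a geometric series: writing $M$ for the finite supremum in \eqref{sharp-smoothness-assump},
\[
\|\sigma\|_{BS^{-n/2}_{0,0}(\boldsymbol{s};\R^n)}
\le \sum_{\boldsymbol{k}\in(\N_0)^3} 2^{\boldsymbol{k}\cdot \boldsymbol{s}}\, M\, 2^{-\boldsymbol{k}\cdot\boldsymbol{s}'}
= M \sum_{\boldsymbol{k}\in(\N_0)^3} 2^{-\boldsymbol{k}\cdot(\boldsymbol{s}'-\boldsymbol{s})}<\infty.
\]

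Next, for each fixed $\epsilon>0$ I would take $\boldsymbol{s}'=\boldsymbol{s}+(\epsilon,\epsilon,\epsilon)$. The embedding above implies that the hypothesis of the corollary for the exponent vector $\boldsymbol{s}$ yields the hypothesis of Proposition \ref{sharp-smoothness} for $\boldsymbol{s}'$: every symbol obeying \eqref{sharp-smoothness-assump} with $\boldsymbol{s}'$ lies in $BS^{-n/2}_{0,0}(\boldsymbol{s};\R^n)$, and so produces a bounded operator $T_\sigma\colon L^2\times L^2\to L^r$. Applying Proposition \ref{sharp-smoothness} to $\boldsymbol{s}'$ then delivers
\[
s_0+\epsilon \ge n/2, \qquad s_1+\epsilon,\ s_2+\epsilon \ge n/r - n/2, \qquad (s_1+s_2)+2\epsilon > n/r.
\]

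Finally, letting $\epsilon\to 0^+$ in each of these inequalities produces the three conclusions of the corollary; notice that the strict inequality $s'_1+s'_2>n/r$ from the proposition relaxes, upon passage to the limit, to the non-strict inequality $s_1+s_2\ge n/r$ as stated. I do not foresee a serious obstacle to this plan: the only nontrivial input is Proposition \ref{sharp-smoothness}, and the bridge between the two classes is the one-line embedding described above.
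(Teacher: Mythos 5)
Your proposal is correct and takes essentially the same route as the paper: embed the $\epsilon$-improved supremum condition \eqref{sharp-smoothness-assump} into the Besov class $BS^{-n/2}_{0,0}(\boldsymbol{s};\R^n)$ via $\|\cdot\|_{L^2_{ul}}\le\|\cdot\|_{L^\infty}$ and geometric-series summability, invoke Proposition \ref{sharp-smoothness}, and let $\epsilon\to 0^+$. The only difference is that you spell out the one-line embedding that the paper leaves as an ``Observe that.''
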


\begin{proof}
Observe that 
all $\sigma$ satisfying 
\eqref{sharp-smoothness-assump} 
with $s_i$ replaced by 
$s_i+ \epsilon$ with $\epsilon>0$ 
belong to 
$BS ^{ -n/2 }_{0,0} (s_0, s_1, s_2; \R^n)$.  
Hence, 
if the assumption of the corollary holds, 
then, by Proposition \ref{sharp-smoothness}, 
we must have 
$s_0 + \epsilon \ge n/2$,
$s_1 + \epsilon ,s_2 + \epsilon  \ge n/r-n/2$, 
and $s_1 + \epsilon +s_2 + \epsilon  > n/r$ for $\epsilon >0$. 
Since $\epsilon >0$ is arbitrary, we obtain 
the conclusion. 
\end{proof}

\begin{acknowledgement}
The authors are grateful to Neal Bez for 
valuable discussions concerning the proof of Proposition \ref{weightL4weak}. 
\end{acknowledgement}


\end{document}